\documentclass[12pt, reqno]{amsart}
\usepackage{amsmath, amsthm, amscd, amsfonts, amssymb, graphicx, color}
\usepackage{mathtools}
\usepackage{microtype}
\usepackage{float}
\usepackage[table]{xcolor}
\usepackage{enumitem}
\usepackage{algorithm}
\usepackage{algorithmic}
\usepackage{listings}
\usepackage[bookmarksnumbered, colorlinks, plainpages]{hyperref}
\usepackage{tikz}
\usetikzlibrary{arrows.meta,positioning,calc,patterns}

\newtheorem{theorem}{Theorem}[section]
\newtheorem{lemma}[theorem]{Lemma}
\newtheorem{proposition}[theorem]{Proposition}
\newtheorem{corollary}[theorem]{Corollary}
\theoremstyle{definition}
\newtheorem{definition}[theorem]{Definition}

\theoremstyle{remark}

\numberwithin{equation}{section}
\counterwithin{figure}{section}

\newcommand{\supp}{\mathrm{supp}}
\newcommand{\Spec}{\mathrm{Spec}}

\title{Inverse Geometric Diffraction by A Cone}
\author[G. Bao]{Gang Bao}
\address{School of Mathematical Sciences, Zhejiang University, Hangzhou 310027, Zhejiang, China. }
\email{baog@zju.edu.cn}	
\author[X. Chen]{Xi Chen}
\address{Shanghai Center for Mathematical Sciences, Fudan University, Shanghai 200438, China; School of Mathematical Sciences, Fudan University, Shanghai 200433, China; 
	Center for Applied Mathematics, Fudan University, Shanghai 200433, China. }
\email{xi\_chen@fudan.edu.cn}
\author[S. Lu]{Shuai Lu}
\address{School of Mathematical Sciences, Fudan University, Shanghai 200433, China. }
\email{slu@fudan.edu.cn}
\author[K. Xiong]{Kuangmiao Xiong}
\address{Shanghai Center for Mathematical Sciences, Fudan University, Shanghai 200438, China. }
\email{24110840017@m.fudan.edu.cn}

\subjclass[2020]{35R30, 35L05}
\keywords{Inverse obstacle problems, geometrical diffraction, wave equations}

\begin{document}
\setcounter{page}{1}

{\small }

\centerline{}
\centerline{}

\maketitle
\begin{abstract}
	 Consider the inverse problem of recovering a strictly convex conical obstacle in $\mathbb{R}^3$ from the diffraction coefficients along with  {arrival directions (lens data) or arrival times} of diffracted waves. The incident wave is a spherical pulse emanating from a point, and the measurements of diffracted waves are taken at an arbitrarily sized receiver placed within the reflection shadow. Specifically, the  {{lens data or arrival times}} determine the location of the tip, whereas the diffraction coefficients  reconstruct the shape of the cone. Since diffraction coefficients are described by half waves over the complement of the cone base in $\mathbb{S}^2$, we reduce inverse diffraction by a cone in $\mathbb{R}^3$ to identifying the reflected wavefront in $\mathbb{S}^2$ and recovering the obstacle using reflected rays on the sphere. The former is accomplished by constructing the Hadamard parametrix for half waves near the wavefront, whereas the latter relies on the topological properties of broken geodesics on $\mathbb{S}^2$.  
	 The framework developed in this paper exploits the analytic and geometric structures of diffracted wave fields characterized in the Geometrical Theory of Diffraction,   and establishes, for the first time, a rigorous inverse theory corresponding to GTD.
\end{abstract}

%\tableofcontents

\section{Introduction}

\subsection{Overview}

Inverse obstacle problems under limited data are practically valuable yet mathematically challenging. In real world applications, setting up sufficiently many receivers is not only expensive but also  {often} infeasible.  In particular, the reflected rays from  the target obstacle, governed by geometric optics, will be missed by the receivers, when the receivers are located in the reflection shadow. 

While detecting obstacles with some sharp features in the surface, e.g. conic points and edges,  the diffracted waves are illuminated by such singular geometries when incident waves strike there. By Keller's Geometrical Theory of Diffraction (GTD),  the diffraction signals by a corner in $\mathbb{R}^2$ or a cone tip in $\mathbb{R}^3$ propagate omnidirectionally, and thus  {may} be received as long as the diffraction illumination point is visible with respect to the receiver. 

Such diffraction phenomena help obstacle detections, and make it possible to achieve unique inversion with very limited data, even if the receiver is placed in the reflection shadow. In \cite{BCLX1}, we formulated a novel inverse scattering method for  polygons in $\mathbb{R}^2$ by diffracted waves. In this work, we develop the framework of inverse diffraction by  conical obstacles in $\mathbb{R}^3$. The measurements for detection consist of  {arrival directions (lens data) or arrival times} of diffracted waves and diffraction coefficients. In many engineering scenarios, these are locally measurable physical parameters.

The diffraction coefficients of a target obstacle describe how the incident fields diffract by its singular shape, and encode its geometric features and material properties. Specifically, the diffraction coefficients of an obstacle with respect to the spherical incident wave are given as follows.
\begin{itemize}

	\item	Denote by $U^{\bullet}$ and $u^{\bullet}$ the wave fields in the high frequency domain and time domain GTD respectively. In the GTD for both regimes, the full wave fields can be decomposed respectively as
	\begin{align*}
		U^{\mathrm{full}}&=U^{\mathrm{inc}}+U^{\mathrm{ref}}+U^{\mathrm{diff}},	
		\\	
		u^{\mathrm{full}}&=u^{\mathrm{inc}}+u^{\mathrm{ref}}+u^{\mathrm{diff}},
	\end{align*}
	where ${\cdot}^{\mathrm{inc}}$ is the incident wave, ${\cdot}^{\mathrm{ref}}$ is the reflected wave by the smooth obstacle boundary, and ${\cdot}^{\mathrm{diff}}$ is the diffracted wave excited by the singular geometry of the obstacle. %In the point-source setting, $u^{\mathrm{inc}}$ is the free-space Green function.

	\item 
	Let $\widetilde{O} \in \mathbb{R}^3$ be the source point, $P = 0 \in \mathbb{R}^3$ the diffraction point, $\widetilde x := r\omega_{out} \in \mathbb{R}^3$ the observation point, $r > 0$  the distance from $P$ to $\widetilde x$.

	\item In the high frequency regime, the diffracted field $U^{\mathrm{diff}}$ of frequency $k$ excited at $P$ admits an asymptotic expansion of the form   
	$$U^{\mathrm{diff}}(k, \widetilde{x}) = D(k,\widetilde{x}, \widetilde{O}) 2\pi  \frac{\exp(\imath k r)}{kr} U^{\mathrm{inc}}(k,P).$$  The function $D$ is called the frequency-domain diffraction coefficient of the target object. 
	
	\item In the time domain, the diffracted field at observation time $t$ obeys
	\[
	u^{\mathrm{diff}}(t,\widetilde x)
	=
	\frac{1}{r}
	\left(
	d(\cdot- r, \widetilde x, \widetilde{O})
	*
	u^{\mathrm{inc}}(\cdot,P)
	\right)(t).
	\]
	%where   $\ell$ is the total travel time of the incident wave from $O$ to $P$ and the resulting diffracted wave from $P$ to $\widetilde x$. 
	The quantity $d$ is the corresponding time-domain diffraction coefficient.
	
\end{itemize}

%In many engineering scenarios, the lens data / arrival times of diffracted waves and the diffraction coefficients of the target obstacles are locally measurable physical parameters.  It is meaningful to adopt them as local measurements for inverse obstacle. 

In this paper, we establish that given an arbitrarily small observation domain in the reflection shadow and a spherical incident wave from a point, the total travel time from the source point to each observation point and the diffraction coefficient at each point uniquely determine the location and the shape of the target conic obstacle.

\subsection{Inverse geometric diffraction by a cone}
%\subsection{The PDE model}

% $C_P(N)\subset \mathbb{R}^3$. Specifically,  $C_P(N) := [0, \infty) \times N$   has a tip $P = 0 \in \mathbb{R}^3$ and a convex closed domain $N \subset \mathbb{S}^2$ as its base. 

The target obstacle of interest is a strictly convex smooth cone in $\mathbb{R}^3$.
\begin{definition}\label{cone}
	A smooth cone with tip $P$ and base $N\subset \mathbb{S}^2$ is	a domain $C_P(N)\subset \mathbb{R}^3$ satisfying :
	\begin{enumerate}
		\item[(i)] $N$ is closed, and $\partial N$ is smooth;
		\item[(ii)] $\partial C_P(N)\setminus\{P\}$ is the ruled surface
		$$\partial C_P(N)\setminus\{P\} :=
		\left\{
		P+r\omega:\ \omega\in \partial N,\ r>0
		\right\};		$$
		\item[(iii)] $C_P(N)$ consists of
		$$
		C_P(N)
		:=
		\left\{
		P+r\omega:\ \omega\in N,\ r>0
		\right\}\cup\{P\}.
		$$
	\end{enumerate}
	Moreover,  we say that $N\subset \mathbb{S}^2$ is strictly convex if it is a geodesically convex smooth domain contained in an open hemisphere and its boundary has strictly positive geodesic curvature. A smooth cone $C_P(N)$ is  said to be strictly convex if its spherical base $N$ is strictly convex.
\end{definition}

The detection task is conducted in  the corresponding  exterior domain
$
\Omega:=\mathbb{R}^3\setminus C_P(N),
$ which is an open cone with tip $P$ and exterior base
$
M:=\mathbb{S}^2\setminus N.
$
Assume that one can make a disturbance $\delta_{\widetilde{O}}$ at an arbitrarily chosen source point $\widetilde{O} \in \Omega$ and then measure the response signal in an arbitrarily small surface patch $S \subset \Omega$. In particular, we are interested in the case that $S$ is placed in the reflection shadow. 

Mathematically, the inversion consists of spotting the location of the tip $P$ and recovering the shape of $C_P(N)$ (i.e. the shape of the base $N \subset \mathbb{S}^2$). However,  there must exist the shadow of the incident waves from the only source point. Consequently, the response signals do not encode the complete geometric features of $C_P(N)$ but only the front face with respect to the source point. As such, the maximal region of the surface of $C_P(N)$ one can recover is the visible part depicted in Figure~\ref{fig:visible_part_cone}.
\begin{figure}
	\centering
	\includegraphics[width=0.9\textwidth]{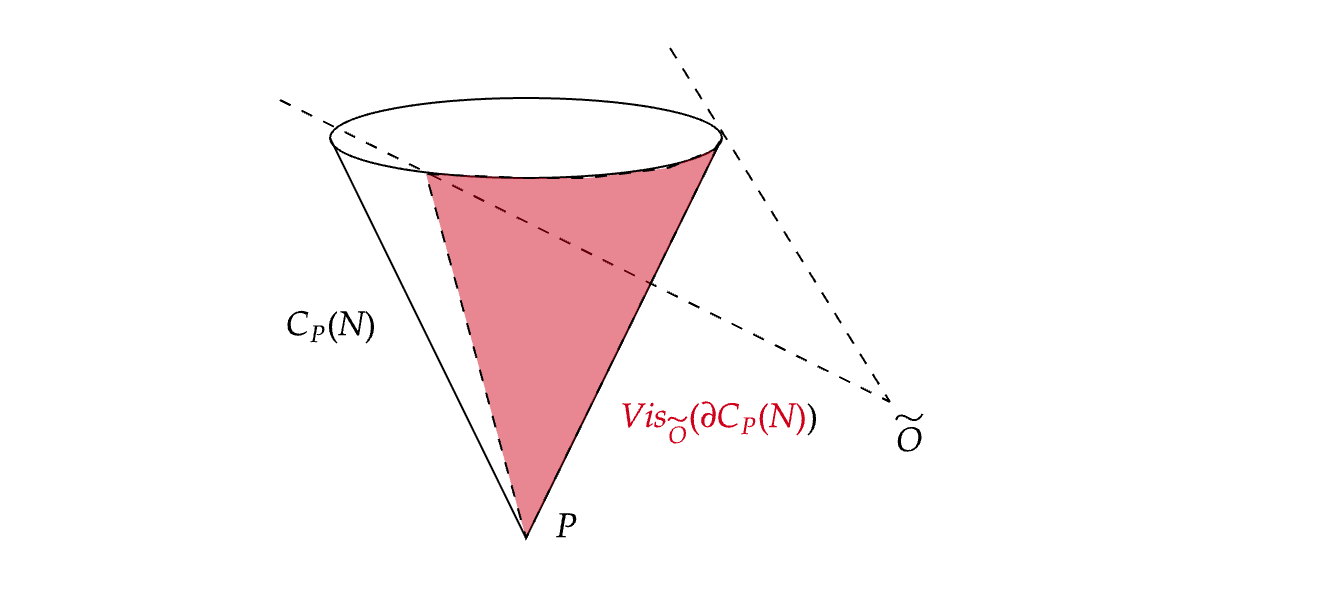}
	\caption{Visible part of $C_P(N)$ with respect to $\widetilde O$.}
	\label{fig:visible_part_cone}
\end{figure}
\begin{definition}\label{visible}
	Let $C_P(N)\subset \mathbb{R}^3$ be a strictly convex cone, $\Omega:=\mathbb{R}^3\setminus C_P(N)$, and  $\widetilde O\in \Omega$. A point $\widetilde x\in \partial C_P(N)$ is said to be  {visible}, with respect to $\widetilde O$, if the line segment joining $\widetilde x$ to $\widetilde O$ is contained in $\Omega\cup\{\widetilde x\}$. The   {visible part} of $\partial C_P(N)$ with respect to $\widetilde O$ is the set
	$
	\operatorname{Vis}_{\widetilde O}\bigl(\partial C_P(N)\bigr)
	$
	of all such visible points.
\end{definition}

The reconstruction of the conical obstacles is conducted by making the active measurements of waves $U^{\bullet}$ in the high frequency domain and waves $u^{\bullet}$ in the time domain. With the positive Laplacian  $\Delta_\Omega$ on $\Omega$, the dynamics of these fields are governed respectively by 
the Dirichlet boundary value problem,
\begin{align*}\label{Helmholtz equation}\left\{\begin{aligned}
		(\Delta_{\Omega} - k^2)U^{\mathrm{full}}(k,\widetilde{x},\widetilde{O})&= \delta_{\widetilde{O}}
		&& \mbox{in $\Omega$},\\
		U^{\mathrm{full}}(k,\widetilde{x},\widetilde{O}) &=0&& \mbox{on $\partial \Omega$},\end{aligned}\right.
\end{align*}  and the Dirichlet initial boundary value problem
\begin{equation*}\label{wave equation}
	\left\{
	\begin{aligned}
		(\partial_{tt}+\Delta_{\Omega})u^{\mathrm{full}}(t,\widetilde{x},\widetilde{O}) &= 0
		&& (t,\widetilde{x})\in \mathbb{R}_+\times \Omega,\\
		u^{\mathrm{full}}(0,\widetilde{x},\widetilde{O}) &= 0
		&& \widetilde{x}\in \Omega, \\
		\partial_t u^{\mathrm{full}}(0,\widetilde{x},\widetilde{O}) &= \delta_{\widetilde O}
		&& \widetilde{x}\in \Omega, \\
		u^{\mathrm{full}}(t,\widetilde{x},\widetilde{O}) &= 0
		&& (t,\widetilde{x})\in \mathbb{R}_+\times \partial\Omega .
	\end{aligned}
	\right.
\end{equation*}

The data for retrieval in the frequency domain are comprised of the lens data of the diffracted wave and the diffraction coefficient at each point   in the receiver :  
\begin{itemize}
	\item The lens data is $(\widetilde{x}, \widetilde{\xi}) \in T \mathbb{R}^3$ where $\widetilde{\xi} \in T_{\widetilde{x}}\mathbb{R}^3$ is the propagating direction of the diffracted field at $\widetilde{x} \in S$;
	\item   The principal frequency domain diffraction coefficient $D_0(\widetilde{x},\widetilde{O})$ is given  specifically by  the following asymptotic expansion of the diffracted field
	\begin{align*}
		U^{\mathrm{diff}}(k,\widetilde{x},\widetilde{O})
		&= \left[U^{\mathrm{inc}} \cdot  2\pi  \frac{\exp(\imath k |\widetilde{O}-P|)}{k|\widetilde{O}-P|} D\right] (k,\widetilde{x},\widetilde{O})  \\
		&=
		2\pi\frac{
			e^{\imath k\left(|\widetilde{O}-P|+|\widetilde{x}-P|\right)}
		}{
			k|\widetilde{O}-P||\widetilde{x}-P|
		}
		\left[D_0(k, \widetilde{x},\widetilde{O})+O(k^{-1})\right].
	\end{align*} 
\end{itemize} 
The  measurements for recovery in the time domain consists of the arrival time of the diffracted field and the diffraction coefficient at each point in the receiver :
\begin{itemize}
	\item The arrival time $\ell$ of the diffracted field at  $\widetilde{x} \in S$  is the distance of the broken line segment from $\widetilde{O}$ via $P$ to $\widetilde{x}$;

	\item The principal time domain diffraction coefficient $d_{0}(t,\widetilde{x},\widetilde{O})$ is given  specifically by  the following asymptotic expansion of the diffracted field 
	\begin{align*}
		u^{\mathrm{diff}}(t,\widetilde{x},\widetilde{O})
		&= \frac{1}{
			|\widetilde{O}-P|
		}(u^{\mathrm{inc}}  \ast d) (t,\widetilde{x},\widetilde{O}) \\
		&=
		\frac{1}{
			|\widetilde{O}-P||\widetilde{x}-P|
		}
		\left[d_0(t- \ell,\widetilde{x},\widetilde{O})+O(t-\ell)\right].
	\end{align*}
\end{itemize}
See Subsection~\ref{subsec:time_domain_diffraction_coefficient} for details on diffraction coefficients.

  { With the notations above, we are ready to present the main result of the paper.}

\begin{theorem}\label{main}
	 Let  $C_P(N) \subset  \mathbb{R}^3$ be a strictly convex smooth cone  with tip $P$ and base $N$ as defined in Definition \ref{cone}.
	Suppose that the source point is  $\widetilde{O}\in \Omega :=  \mathbb{R}^3\setminus C_P(N)$ in the exterior domain, and 
	the receiver is an arbitrarily small $C^2$-surface patch $S\subset \Omega$ such that 
	\begin{itemize}
		\item $S$ lies in the reflection shadow;
		\item $S$ has non-vanishing Gauss curvature;
		
		\item  $S$ is not contained in any hyperboloid.
	\end{itemize}   
 	Then we have the uniqueness of inverse diffraction. More precisely,
 	\begin{itemize}
		\item  	In the high frequency domain, the map	\begin{equation}\label{eqn : FD observation at l}
		\left(P,\operatorname{Vis}_{\widetilde O}\bigl(\partial C_P(N)\bigr)\right)
		\longmapsto
		\left\{
		\left(\widetilde{\xi}(\widetilde{x}),D_{0}(\widetilde{x},\widetilde{O})\right)
		:\ \widetilde{x}\in S
		\right\}
	\end{equation} is injective.
	
\item	In the time domain, the map
	\begin{equation}\label{eqn : observation at l}
		\left(P,\operatorname{Vis}_{\widetilde O}\bigl(\partial C_P(N)\bigr)\right)
		\longmapsto
		\left\{
		\left(\ell(\widetilde{x}),d_0(0,\widetilde{x},\widetilde{O})\right)
		:\ \widetilde{x}\in S
		\right\}
	\end{equation}
	is injective. 	\end{itemize}
\end{theorem}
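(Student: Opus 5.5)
The argument splits into two stages, mirroring the abstract: first recover the tip $P$ from the kinematic data (lens data or arrival times), then recover the visible part of the base $N$ from the diffraction coefficients, viewed as functions on $\mathbb{S}^2$.

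\emph{Stage one: locating the tip.} In the frequency-domain setting, the lens data $\widetilde\xi(\widetilde x)$ is the direction of the diffracted ray through $\widetilde x$. This ray emanates radially from $P$, so each $\widetilde x\in S$ determines a line $\widetilde x+\mathbb{R}\widetilde\xi(\widetilde x)$ passing through $P$. If two admissible tips $P\neq P'$ produced the same data, every such line would pass through both points, so all the lines would coincide. That would force $S$ into a single line, which is impossible for a $C^2$ patch with non-vanishing Gauss curvature. Hence $P$ is the common intersection point of these lines and is uniquely determined.

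In the time-domain setting, the arrival time is $\ell(\widetilde x)=|\widetilde O-P|+|\widetilde x-P|$. Suppose $P'$ produces the same function on $S$. Then $|\widetilde x-P|-|\widetilde x-P'|$ is constant on $S$, so $S$ lies on a sheet of the two-sheeted hyperboloid with foci $P,P'$. In the degenerate case where the constant is $0$, $S$ lies in the bisecting plane, which has zero Gauss curvature. Both alternatives are excluded by the hypotheses, so $P=P'$. Once $P$ is known, $|\widetilde O-P|$ and $|\widetilde x-P|$ are known, and therefore so is $\omega_{out}(\widetilde x)=(\widetilde x-P)/|\widetilde x-P|$. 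The map $\widetilde x\mapsto\omega_{out}$ is the central projection of $S$ from $P$. Because $S$ is curved, this projection has nonempty interior in $\mathbb{S}^2$, giving an open set $\Gamma\subset M$ of outgoing directions on which the coefficient is known.

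\emph{Stage two: recovering the base.} By separation of variables in spherical coordinates centred at $P$, the diffracted field is expressed through the half-wave group $e^{-it\sqrt{\Delta_M+1/4}}$ on $M=\mathbb{S}^2\setminus N$ with Dirichlet condition on $\partial N$. Taking the results of Subsection~\ref{subsec:time_domain_diffraction_coefficient} as given, $D_0(\cdot,\widetilde O)$ and $d_0(0,\cdot,\widetilde O)$ are, up to explicit factors, the kernel $E(t,\omega_{in},\omega_{out})$ of this half-wave operator evaluated at $t=\pi$. Here $\omega_{in}=(\widetilde O-P)/|\widetilde O-P|$ is already known from stage one. The freely propagating part of $E(\pi,\omega_{in},\cdot)$ is supported at distance $\pi$ from $\omega_{in}$. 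The reflected part is singular on the reflected wavefront, namely the set of endpoints of length-$\pi$ geodesics from $\omega_{in}$ broken once at $\partial N$. That $S$ lies in the reflection shadow means $\Gamma$ sees only the geometric-diffraction part.

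Near the reflected wavefront I would build a Hadamard parametrix for the half-wave kernel. This shows that the leading singularity of the coefficient on $\Gamma$ jumps, or blows up, exactly across the wavefront, with a nonvanishing principal amplitude coming from the positive curvature of $\partial N$. Consequently the data on $\Gamma$ determine the piece of reflected wavefront meeting $\Gamma$, together with its conormal directions. The step I expect to be hardest is getting from this local piece to the full visible part, since $\Gamma$ may be arbitrarily small.

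My plan is as follows. Analyticity of the kernel in $\omega_{out}$ away from singularities, or a unique-continuation argument for the half wave, would propagate knowledge of the coefficient from $\Gamma$ to the whole illuminated region of $M$. This would recover the entire reflected wavefront of length $\pi$. From each wavefront point and its conormal, I would then trace the reflected geodesic backwards a distance $\pi$. The break point is characterised as the unique point where this backward geodesic meets a forward geodesic from $\omega_{in}$ with total length $\pi$ and equal angles.

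Strict convexity of $N$, which has positive geodesic curvature and lies in a hemisphere, makes the map from boundary point to reflected wavefront point a diffeomorphism from the visible arc of $\partial N$ onto the wavefront. This is the topological property of broken geodesics on $\mathbb{S}^2$ referred to in the abstract. I would prove injectivity by a monotonicity or degree argument along the arc. Hence the wavefront determines the visible arc of $\partial N$ with respect to $\omega_{in}$. Taking the cone over this arc with tip $P$ recovers $\operatorname{Vis}_{\widetilde O}(\partial C_P(N))$, since visibility from $\widetilde O$ corresponds to visibility on the sphere from $\omega_{in}$. This gives injectivity of both maps \eqref{eqn : FD observation at l} and \eqref{eqn : observation at l}.
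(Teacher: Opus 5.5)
Your overall route is the paper's: tip from the kinematic data (your lens-data argument and your hyperboloid argument, including the degenerate bisecting-plane case, are fine), identification of $D_0$ and $d_0(0,\cdot)$ with $e^{-i\pi\sqrt{\Delta_M+1/4}}\delta_O$, recovery of the $\pi$-time wavefront $\Lambda_M^O$, and backward tracing from $(x,\mathcal N^\Lambda(x))$ to the break point. For that last step, the uniqueness you need is the strict monotonicity of the travel time along the backward great circle, as in the paper. The gap is in how you get from your set $\Gamma$ to $\Lambda_M^O$.

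Because $S$ lies in the reflection shadow, $\Gamma\subset M^{\mathrm I}=\{x: t(x,O)>\pi\}$, which is open and disjoint from $\Lambda_M^O$. So there is no ``piece of the wavefront meeting $\Gamma$'', and a parametrix near $\Lambda_M^O$ tells you nothing until the data have been continued all the way up to $\Lambda_M^O$. That continuation is the crux, and neither tool you name provides it. Unique continuation does not apply to the values of a nonlocal half-wave kernel on a single time slice. ``Analyticity away from singularities'' is also not available: in $M^{\mathrm{II}}$ the field has been reflected off the merely $C^\infty$ curve $\partial N$, and there is no reason for it to be real-analytic even where it is smooth. What is true, and must be proved, is that $\sin(\pi\sqrt{\Delta_M+1/4})\delta_O$ is real-analytic on exactly $M^{\mathrm I}$, the region not yet reached by reflected rays. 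This does not follow from finite propagation speed, which $\sin(t\sqrt{\Delta_M+1/4})\delta_O$ lacks. The paper writes the half wave as $\pm\partial_t^{2n+1}$ of $\cos(t\sqrt{\Delta_M+1/4})(\Delta_M+\tfrac14)^{-n-1/2}\delta_O$. It then splits $(\Delta_M+\tfrac14)^{-n-1/2}$ into a short-time part, where finite propagation speed identifies $M$ with $\mathbb S^2$, and a high-frequency tail with $(2q)!\,L^{q+1}$ bounds on $\Delta^q$. It concludes by the Kotake--Narasimhan criterion.

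Two further ingredients are needed to turn this analyticity into recovery of $\Lambda_M^O$. First, the singularity must be visible from the shadow side, since a jump seen only from $M^{\mathrm{II}}$ would not obstruct analytic continuation out of $M^{\mathrm I}$. The paper's parametrix shows that the half wave behaves like $B_0(y)\,(\widetilde t(y)^2-\pi^2)_+^{-3/2}$ with $B_0\neq 0$ on the $M^{\mathrm I}$ side, and is smooth on the $M^{\mathrm{II}}$ side. This is the reverse of $(\Delta_M+\tfrac14)^{-1/2}\sin(\pi\sqrt{\Delta_M+1/4})\delta_O$, so your ``jumps, or blows up'' must be made precise, and it matters that you work with the half wave itself. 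Second, the two candidate bases have different shadows, so analytic continuation only identifies the two half waves on the component $C$ of $M^{\mathrm I}_{(1)}\cap M^{\mathrm I}_{(2)}$ containing $\Gamma$. You therefore need each $M^{\mathrm I}_{(j)}$ to be connected. This follows from $\Lambda_M^O$ being a Jordan curve, which in turn rests on reflected rays from $O$ not crossing before time $\pi$. You then need a contradiction argument at a point of $(\Lambda^O_{M_{(1)}}\setminus\{-O\})\cap\partial C$ lying in $M^{\mathrm I}_{(2)}$, where one half wave blows up while the other is smooth. With these pieces supplied, your plan becomes the paper's proof.
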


\begin{figure}
	\centering
	\includegraphics[width=0.8\textwidth]{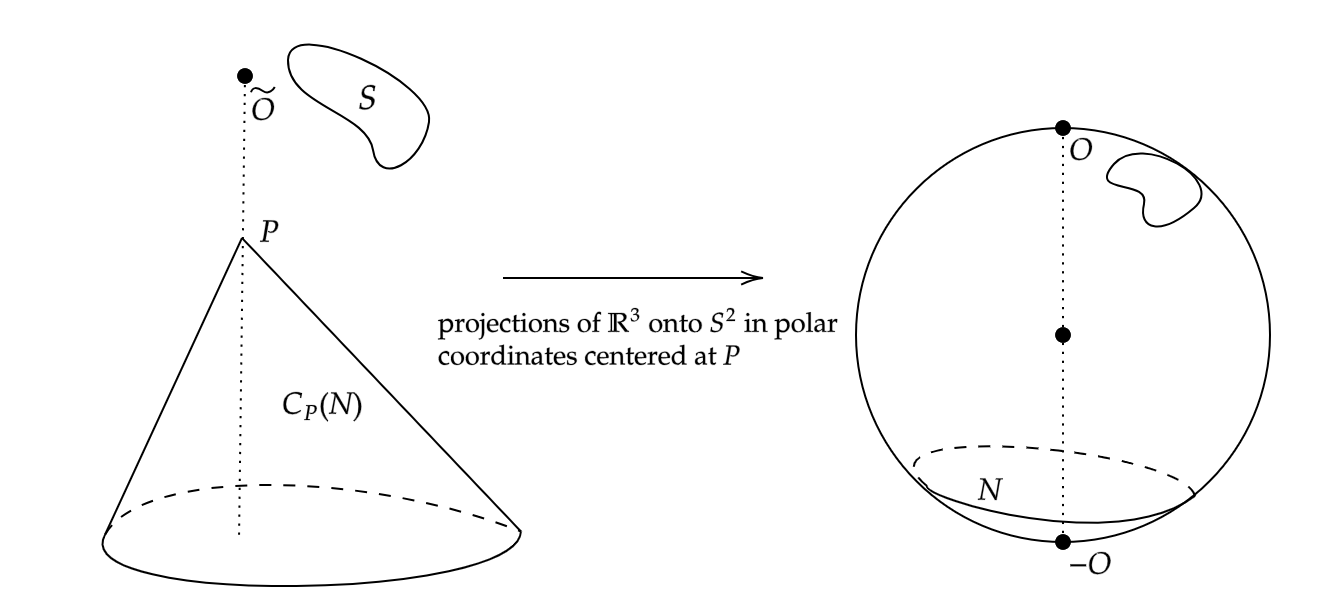}
	\caption{Active measurements  in $\mathbb{R}^3$ and projection to $\mathbb{S}^2$. }
	\label{fig:smooth_cone}
\end{figure}

 \subsection{Geometric theory of diffraction}

The forward formulation for wave diffraction is known as the Geometrical Theory of Diffraction (GTD). The GTD lays out the foundational theory of wave propagation, which classical Geometrical Optics fail to explain.

 The earliest explicit model is Sommerfeld's solution for diffraction by wedges and half-planes \cite{So96,So54} in the frequency domain. Keller and Blank \cite{KellerBlank51} described the evolution of incident, reflected, and diffracted wavefronts generated by plane pulses impinging on wedges and corners, and derived explicit elementary-function solutions. Friedlander \cite{Fr58} studied the diffraction of spherical wave by wedges in the time domain.
 
 Keller's GTD \cite{Ke62} builds on the diffraction law, which relates incident and diffracted rays through a geometrical phase-matching condition, and extends geometrical optics by incorporating diffracted rays emitted from tips, edges, vertices, shadow boundaries, caustics, and other points of geometric singularity. Seckler and Keller \cite{SecklerKeller59} further extended this ray picture to inhomogeneous media. See \cite{KaminetzkyKeller72} for the generalizations to higher-order edges and vertices.
 
 %Ordinary, reflected, refracted, surface, diffracted, and complex rays were formulated through Fermat-type principles and ray tracing. The innovation of GTD is that the asymptotic field is described not only by optical rays, but by an enlarged family of geometrically generated rays. This is the basic mechanism by which wave energy enters shadow regions.

  In the original GTD formulation, the asymptotic ansatz fails near reflection boundaries, where reflected rays emerge or vanish. Consequently, the method yields no physically meaningful field representation within the reflection shadow region. Multiple remedies have been proposed across engineering literature to resolve this deficiency.
  Developed by Kouyoumjian and Pathak \cite{KouyoumjianPathak74}, the Uniform Theory of Diffraction (UTD) circumvents the above restriction via uniformly valid diffraction coefficients that retain correctness throughout such transition zones. See \cite{RousseauPathak95} for the time-domain counterpart of UTD. Alternatively, Ufimtsev’s Physical Theory of Diffraction (PTD) \cite{Ufimtsev71,Ufimtsev96} extends the GTD into the reflection shadow by incorporating supplementary diffractive correction terms.

 %PTD decomposes induced surface currents into a physical-optics part and a nonuniform edge current. The diffracted field is interpreted as radiation from this edge current. While GTD emphasizes rays and wavefront geometry, PTD emphasizes currents and edge radiation. Together they show that edge diffraction is not a secondary correction but a leading mechanism in high-frequency scattering, especially in radar cross-section analysis and stealth design.

 From the viewpoint of microlocal analysis, H\"ormander and Duistermaat's \cite{Ho_FIO_1,Ho_FIO_2} classical propagation of singularities well explains geometric optics in smooth media; however,  {this theory doesn't apply to diffraction of waves}. Cheeger and Taylor \cite{CT82a,CT82b} computed the full wave kernel on conical manifolds by the method of separation of variables, and established the propagation of  singularities for diffracted fields. See \cite{Leb97,MW04,GW-JDG-22,MVW08,V08,dHUV-2015,hintz2024localtheorywaveequations} for microlocal results on propagation of singularities for diffracted fields  in more complex geometry. 
 
 Numerically, Smyshlyaev \cite{Smyshlyaev90} developed the formula for computing diffraction coefficients by cones. The leading term of $D\!\bigl(k,\widetilde{x},\widetilde{O}\bigr)$ is the $-\pi$-time spherical half wave on the exterior base $M$; more precisely, by \cite[(3.5)]{Smyshlyaev90},
 \begin{equation}\label{diffraction coefficient}
 	D_0(\widetilde{x},\widetilde{O})=
 	\Bigl(\exp\Big(-\imath\pi\sqrt{\Delta_M+\tfrac14}\Big)\delta_O\Bigr)(x).
 \end{equation}
 where $\Delta_M$ is the positive Laplacian on $M\subset \mathbb{S}^2$, $x=\pi_{\mathbb S^2}^P(\widetilde x)$ and $O=\pi_{\mathbb S^2}^P(\widetilde O)$ are the spherical projections with center $P$. See \cite{BonnerGrahamSmyshlyaev05,Babich94,BabichDementcprimeSamokishSmyshlyaev02,BabichDementcprimeSamokish00,1} for further applications of Smyshlyaev's formula to various types of waves and boundary conditions.  
 
 {Diffraction phenomena evidently complicate wave fields. Nevertheless, diffracted waves can facilitate inversion by reducing the amount of data required for reconstruction. Recently, \cite{BCLX1} proved the unique determination of polygonal obstacles in \(\mathbb{R}^2\) from measurements collected by receivers of arbitrary size placed at arbitrary locations. The argument relies on explicit computations of diffraction on conic manifolds presented in \cite{CT82a, CT82b}.}

 \subsection{Methodologies}\label{sec : methodlogy}
 
 The overall strategy to prove Theorem \ref{main} consists of 
 \begin{itemize}
 	\item[I.]  spotting the tip by the lens data in the frequency domain  {or} the arrival times in the time domain;
 	
 	\item[II.] retrieving the spherical half waves associated with diffracted waves from diffraction coefficients; 
 	
 	\item[III.]  reconstructing the spherical wavefront by the spherical half wave data given by the diffraction coefficients; 
 	
 	\item[IV.] recovering the visible part of $\partial M$ by the knowledge of the spherical wavefront obtained in Step III.
 \end{itemize}
 
 First of all, tip spotting can be achieved by elementary geometry, since the tip is the point source of diffracted waves. See Proposition \ref{prop:tip_recovery} for details.
 
 Following the Cheeger--Taylor approach on conic manifolds, it is convenient to reduce the shape retrieval in $\Omega \subset \mathbb{R}^3$, via separation of variables, to that in the base $M \subset \mathbb{S}^2$.
 Once the tip is known, the reconstruction of the shape of $C_P(N)$ amounts to the determination of $N$ in $\mathbb{S}^2$.
 
  In view of Smyshlyaev's formula \eqref{diffraction coefficient} and its time-domain realization, the diffraction coefficients in both domains encode the $-\pi$-time spherical half waves. See Proposition \ref{prop:diffraction_to_half_wave}.
 
 Proposition \ref{prop:half_wave_to_wavefront} for Step III aims to retrieve,  by the $-\pi$-time spherical half waves,  the $\pi$-time spherical wavefront
 $$
 \Lambda_M^O:=     \mathrm{sing\,supp} \left(  \exp\left(    \imath \pi \sqrt{\Delta_M + \tfrac{1}{4}}  \right) \delta_O  \right)  \subset M.
 $$
  Proposition \ref{jordan curve theorem} shows that $\Lambda_M^O$ is a simple closed curve such that $M \setminus \Lambda_M^O$ has exactly two connected components $M^{\mathrm{I}}$ and $M^{\mathrm{II}}$. One of the two, say $M^{\mathrm{I}}$, is just the spherical projection of the reflection shadow in $\Omega$, as demonstrated in Proposition \ref{lemma:reflection_shadow_visible_characterization}. In fact, Corollary \ref{WF(u_half)} proves that the half sine wave $$u_{\mathrm{half}}^{\mathrm{sin}}(t, x) := \sin\left(     t \sqrt{\Delta_M + \tfrac{1}{4}}  \right) \delta_O (x)$$ is real-analytic in $\{\pi\} \times  M^{\mathrm{I}}$. Furthermore, Proposition \ref{half wave asymptotic prop}  constructs the Hadamard parametrices for $u_{\mathrm{half}}^{\mathrm{sin}}(\pi, x) $ near $\Lambda^O_M$,  blowing up as $M^{\mathrm{I}} \ni x  \rightarrow \Lambda^O_M$. This identifies $M^{\mathrm I}$ as the maximal domain of real-analytic continuation, with $\Lambda_M^O$ arising as its boundary. Consequently, $\Lambda_M^O$ can be uniquely determined by $u_{\mathrm{half}}^{\mathrm{sin}}(\pi, x) $.

For Step IV, Proposition \ref{reconstruct_boundary}, with the knowledge of $\Lambda_M^O$, reconstructs the visible part of the conic boundary $\partial C_P(N)$. More precisely,  $\Lambda_M^O$ determines the set of spherical reflection points on $\partial M$, which are just  the projection of the visible boundary with respect to $O$. Then the radial extension from $P$ uniquely   recovers the visible part of $\partial C_P(N)$.

In the end, we summarize the inversion process above in Figure~\ref{fig:intro_reconstruction_scheme}.

\begin{figure}[htbp]
	\centering
	\begin{tikzpicture}[
		scale=0.8,
		transform shape,
		node distance=9mm and 10mm,
		box/.style={
			rectangle,
			draw,
			rounded corners=2pt,
			align=center,
			text width=4.1cm,
			minimum height=9mm
		},
		arrow/.style={
			thick,
			->,
			>=Latex
		}
		]
		
		\node[box] (lens) {Lens data\\ {\small in frequency domain}};
		\node[box, right=of lens] (data) {Arrival time\\ {\small in time domain}};
		
		\node[box, below=14mm of $(lens)!0.5!(data)$] (tip) {$\text{Step I}$\\Recover the tip \ensuremath{P}};
		\node[box, below=of tip] (sphere) {$\text{Step II}$\\Reduce to the spherical exterior domain\\ \ensuremath{M=\mathbb S^2\setminus N}};
		
		\node[box, below left=12mm and 8mm of sphere] (identify) {Identify\\ \ensuremath{	\exp(-\imath\pi\sqrt{\Delta_M+\tfrac14})\delta_O}};
		\node[box, below right=12mm and 8mm of sphere] (detect) {Detect \ensuremath{\Lambda_M^O} from analyticity\\ and one-sided expansion};
		
		\node[box, below=20mm of sphere] (wavefront) {$\text{Step III}$\\Recover the spherical wavefront \ensuremath{\Lambda_M^O}};
		\node[box, below=of wavefront] (cone) {$\text{Step IV}$\\Recover the visible part\\ $\operatorname{Vis}_{\widetilde O}
			\left(
			\partial C_P(N)
			\right)$};
		
		\draw[arrow] (lens) -- (tip);
		\draw[arrow] (data) -- (tip);
		\draw[arrow] (tip) -- (sphere);
		\draw[arrow] (sphere) -- (identify);
		\draw[arrow] (sphere) -- (detect);
		\draw[arrow] (identify) -- (wavefront);
		\draw[arrow] (detect) -- (wavefront);
		\draw[arrow] (wavefront) -- (cone);
		
	\end{tikzpicture}
	\caption{Reconstruction scheme.}
	\label{fig:intro_reconstruction_scheme}
\end{figure}

\subsection{Structure of the paper}

The proof of Theorem~\ref{main}  adheres to Steps I--IV outlined above. In particular, Steps II--IV are encapsulated into three stand-alone black-box modules for analytical convenience. % in Sections \ref{sec:reduction_to_spherical_problem}--\ref{sec : spher obst} respectively.
First, Section~\ref{sec:reduction_to_spherical_problem} reviews diffracted wavefronts and diffraction coefficients by a cone, and then reduces the diffraction data to the relevant spherical half waves. Prior to conducting further spherical analysis for the inversion scheme, we temporarily step away from the reconstruction framework and derive a set of useful topological properties of spherical wavefronts, which are detailed in Section \ref{sec : spher geom}. Then Section~\ref{sec:determination_of_spherical_wavefronts} establishes the unique recovery of spherical wavefronts from the knowledge of spherical half waves. Subsequently, the spherical obstacle is reconstructed from spherical wavefronts in Section \ref{sec : spher obst}.  The full  proof incorporating Step I and the three modules  is elaborated in  Section~\ref{Reconstruction_of_the_strictly_convex_cone}. For readability, we move the proofs of some stand-alone  technical lemmas to the appendix.

 %Section~\ref{sec:preliminaries_spherical_conic} develops the spherical geometry and diffraction coefficients needed for the reconstruction. Subsection~\ref{sec:proof_of_proposition_D} introduces extended spherical geodesics and the $\pi$-time spherical wavefront, and shows how the spherical wavefront together with its inward normal field determines the visible part of the boundary. 

% Subsection~\ref{The_analytic_singular_support_of_wave_equation} proves real-analyticity of the spherical sine half wave propagator on the $M^{\mathrm I}$. Subsection~\ref{Asymptotic_behavior_of_the_half wave_propagator} derives its one-sided asymptotic expansion along $\Lambda_M^O$, and Subsection~\ref{sec:proof_of_proposition_C} combines these results to recover both $\Lambda_M^O$ and its inward normal field from the half wave data.

%Appendix~\ref{appendix:reflected_rays} collects the auxiliary geometric properties of reflected spherical rays used in the main text. Appendix~\ref{appendix1} proves the factorial tail estimate \eqref{eq:tail_decay_microlocal} required in the analyticity argument.

 \section{Spherical reduction of diffraction}
 \label{sec:reduction_to_spherical_problem}
 
We commence from Step II, and first reduces the diffracted waves in $\Omega$ to the spherical half waves  on \(M\), where the maximal angular travel time \(\pi\) on $M$ governs the dynamics of diffraction in $\Omega$. The main result is   \begin{proposition}
	\label{prop:diffraction_to_half_wave}
	Let $C_P(N_{(j)})$, $j=1,2$, be two cones as in
	Theorem~\ref{main} with the same tip $P$, and set
	$
	M_{(j)}:=\mathbb S^2\setminus N_{(j)}.
	$
	Under the polar coordinates with origin $P$ for $\Omega$, the corresponding spherical projections is denoted by
	\begin{align*}
 \pi_{\mathbb S^2}^{P} : \Omega &\longrightarrow M, \qquad  
   \widetilde{z}  \longmapsto z. 
	\end{align*}
	Suppose that either  the principal frequency-domain diffraction coefficients agree:
	\begin{equation}\label{eqn:frequency-domain diffraction coefficients agree}
		D_{0,(1)}(\widetilde x,\widetilde O)
		=
		D_{0,(2)}(\widetilde x,\widetilde O), \quad \forall \widetilde x\in S,
	\end{equation}
		or  the principal time-domain diffraction coefficients agree at the
		arrival time:
		\begin{equation}\label{eqn:time-domain diffraction coefficients agree}
			d_{0,(1)}\!\bigl(0,\widetilde x,\widetilde O\bigr)
			=
			d_{0,(2)}\!\bigl(0,\widetilde x,\widetilde O\bigr), \quad \forall \widetilde x\in S.
		\end{equation}
	Then the following spherical half waves coincide
	\begin{equation}
		\label{eqn : exp half wave on S}
		\exp\Bigl(-\imath\pi\sqrt{\Delta_{M_{(1)}}+\tfrac14}\Bigr)\delta_O
		=
		\exp\Bigl(-\imath\pi\sqrt{\Delta_{M_{(2)}}+\tfrac14}\Bigr)\delta_O
	\end{equation}
	on $\pi_{\mathbb S^2}^{P}(S)
	\cap
	M_{(1)}^{\mathrm I}
	\cap
	M_{(2)}^{\mathrm I}.$ Consequently, taking the imaginary parts gives
	\begin{equation}
		\label{eqn : sine half wave on S}
		\sin\left(\pi\sqrt{\Delta_{M_{(1)}}+\tfrac14}\right)\delta_O
		=
		\sin\left(\pi\sqrt{\Delta_{M_{(2)}}+\tfrac14}\right)\delta_O
	\end{equation}
	on $\pi_{\mathbb S^2}^{P}(S)
	\cap
	M_{(1)}^{\mathrm I}
	\cap
	M_{(2)}^{\mathrm I}.$
\end{proposition}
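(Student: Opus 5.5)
The plan is to derive a Smyshlyaev-type representation of each principal coefficient (\(D_0\) and \(d_0(0,\cdot)\)) as the \(-\pi\)-time spherical half wave evaluated at the spherical projection, and then to read off \eqref{eqn : exp half wave on S} pointwise.

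\emph{Frequency domain.} I would start from \eqref{diffraction coefficient}, i.e. Smyshlyaev's formula \cite[(3.5)]{Smyshlyaev90}. For \(\widetilde x\in S\) it gives \(D_{0,(j)}(\widetilde x,\widetilde O)=\bigl(\exp(-\imath\pi\sqrt{\Delta_{M_{(j)}}+\tfrac14})\delta_O\bigr)(x)\) with \(x=\pi^P_{\mathbb S^2}(\widetilde x)\). This uses that the tip \(P\) is common, so \(O=\pi^P_{\mathbb S^2}(\widetilde O)\) and the projections coincide for \(j=1,2\).

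The formula needs justification in one respect. It is derived by Cheeger--Taylor separation of variables: writing \(\Delta_\Omega=-\partial_r^2-\tfrac2r\partial_r+r^{-2}\Delta_M\) and \(\nu=\sqrt{\Delta_M+\tfrac14}\), the Green function is a functional calculus of Bessel functions \(I_\nu,K_\nu\) in \(\nu\). Its large-\(k\) asymptotics split into a geometric-optics part and a diffracted part. The diffracted part carries the phase \(k(|\widetilde O-P|+|\widetilde x-P|)\) and has the principal symbol \(e^{-\imath\pi\nu}\delta_O\), which comes from the Hankel connection \(K_\nu\) versus \(I_\nu\) via the factor \(e^{\mp\imath\pi\nu}\).

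This clean splitting holds where \(x\) lies away from the angular wavefront \(\Lambda^O_{M_{(j)}}\), i.e. where \(e^{-\imath\pi\nu}\delta_O\) is smooth. That is precisely why we restrict to \(M^{\mathrm I}_{(j)}\), the projection of the reflection shadow (Proposition \ref{lemma:reflection_shadow_visible_characterization}). Since \(S\) lies in the reflection shadow, on \(\pi^P_{\mathbb S^2}(S)\cap M^{\mathrm I}_{(1)}\cap M^{\mathrm I}_{(2)}\) both coefficients are given by the formula at the same point \(x\). Hence \eqref{eqn:frequency-domain diffraction coefficients agree} yields \eqref{eqn : exp half wave on S} there.

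\emph{Time domain.} Here I would take the inverse Fourier transform in \(k\) of the Green function. The diffracted component becomes \(\frac{1}{|\widetilde O-P||\widetilde x-P|}\,d_0(t-\ell)\), with \(\ell=|\widetilde O-P|+|\widetilde x-P|\). The leading term is obtained from the homogeneity of the frequency symbol: \(2\pi e^{\imath k\ell}/k\) corresponds, up to a universal normalization, to a Heaviside jump at \(t=\ell\). The amplitude of that jump at \(t=\ell\) is \(D_0\), so \(d_{0,(j)}(0,\widetilde x,\widetilde O)=c\,\bigl(e^{-\imath\pi\nu_{(j)}}\delta_O\bigr)(x)\) with the same constant \(c\neq0\) for both cones.

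A possible subtlety is that the time-domain convention might only return the real part. To handle it, I would follow the Cheeger--Taylor kernel \(\sin(t\sqrt{\Delta_\Omega})/\sqrt{\Delta_\Omega}\). In that representation the diffracted front at \(t=\ell\) is expressed through the Legendre-type functional calculus in \(\nu\), whose value at the jump reduces to \(e^{-\imath\pi\nu}\delta_O\), or at least to its imaginary part, which is all we need afterwards. In either case \eqref{eqn:time-domain diffraction coefficients agree} gives \eqref{eqn : exp half wave on S}, or directly \eqref{eqn : sine half wave on S}.

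\emph{Sine wave.} Finally, \(\sin(\pi\nu)=\tfrac{1}{2\imath}\bigl(e^{\imath\pi\nu}-e^{-\imath\pi\nu}\bigr)\). Because \(\nu\) is real self-adjoint and \(\delta_O\) is real, \(e^{\imath\pi\nu}\delta_O=\overline{e^{-\imath\pi\nu}\delta_O}\). Therefore \(\sin(\pi\nu)\delta_O=-\operatorname{Im}\bigl(e^{-\imath\pi\nu}\delta_O\bigr)\), and taking imaginary parts of \eqref{eqn : exp half wave on S} gives \eqref{eqn : sine half wave on S}.

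The main obstacle is rigorously justifying the leading-order identification, i.e. that the principal coefficient is exactly the half wave evaluated at \(x\), uniformly on the shadow region. This amounts to controlling the Bessel asymptotics (for large \(k\) uniformly in \(\nu\)) away from \(\Lambda^O_M\). I would handle it by citing \cite{Smyshlyaev90,CT82a,CT82b} rather than reproving it.
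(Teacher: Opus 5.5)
Your proposal follows the paper's proof. The paper also reads off \eqref{eqn : exp half wave on S} pointwise on $\pi_{\mathbb S^2}^{P}(S)\cap M_{(1)}^{\mathrm I}\cap M_{(2)}^{\mathrm I}$ from Smyshlyaev's identification \eqref{diffraction coefficient} together with $d_0(0,\widetilde x,\widetilde O)=D_0(\widetilde x,\widetilde O)$, and then takes imaginary parts. Your hedge that the time-domain data might yield only the real or imaginary part is not needed in the paper's set-up: there $d$ is defined via \eqref{eqn:time_freq_relation} as the inverse Fourier transform of $k^{-1}D$, which gives \eqref{eqn:arrival_time_d0_half_wave} directly, so your constant is $c=1$.
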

 
 \subsection{The diffraction wavefronts in $\Omega$}
 \label{subsec:geometric_pi_front}
 
Consider  $\Omega$ with polar coordinates $(r, x)$ centred at $P$. The standard metric for $\mathbb{R}^3$ induces for $\Omega$   the  metric
 \begin{equation*}\label{eqn : the product metric}
 	g = dr^2 + r^2 g_{\mathbb S^2},
 \end{equation*}
 where $g_{\mathbb S^2}$ is the standard metric on $\mathbb S^2$.
 
The rays associated with scattered waves are the geodesics broken by the obstacle. In particular, a broken geodesic in $\Omega \subset \mathbb R^3$ is a continuous curve which is geodesic in the interior of $\Omega$, and which satisfies the Euclidean specular reflection law whenever it meets $\partial\Omega$.

Let $\gamma$ be the (broken) geodesic segment connecting $(r_1,x_1), (r_2,x_2)\in \Omega$. If $\gamma$ avoids $P$, the length of $\gamma$, by \cite[(3.41)]{CT82a}, is 
\begin{equation*}
	d_{\Omega}((r_1,x_1),(r_2,x_2))=\sqrt{r_1^2+r_2^2+2r_1r_2\cos(\theta(x_1,x_2))},
\end{equation*} 
where $\theta(x_1,x_2)<\pi$ denotes the spherical length on $M$ of the projection of $\gamma$. Thus $\pi$ is the maximal spherical travel time for (broken) spherical geodesics on $M$ induced by trajectories in $\Omega$ that avoid the tip. 
 
 Let
 $
 \widetilde O=(r(\widetilde O),O)\in \Omega,
 $ and $
 \widetilde x=(r(\widetilde x),x)\in \Omega,
 $
 with $O,x\in M$. Denote
 \begin{itemize}
 	\item by $\mathcal G_t$ the union of the incident wavefront and the reflected wavefront;
 	\item by $\mathcal D_t$ the diffracted wavefront;
 	\item by $\mathcal F_t = \overline{\mathcal G_t} \cap \mathcal D_t$ the forward wavefront.
 \end{itemize}  
 For the fixed time $t>0$, the exterior domain $\Omega$ is divided up into three regions I, II and III by the incident wavefront and the diffracted wavefront as in \cite{Fr58,CT82a} (see Figure~\ref{fig:three_regions}).
 
 \begin{figure}[htbp]
 	\centering
 	\includegraphics[width=0.8\textwidth]{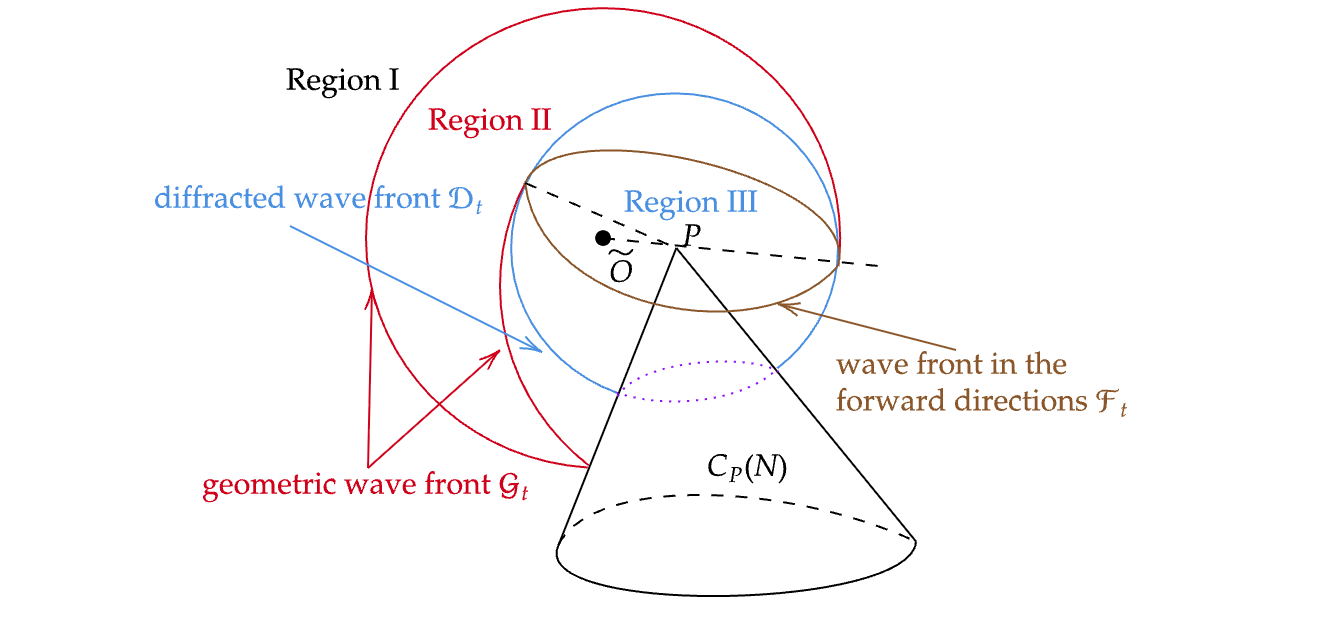}
 	\caption{Diffraction by a cone.}
 	\label{fig:three_regions}
 \end{figure}
 
 More precisely, we have:
 \begin{itemize}
 	\item \textbf{Region I:}
 	\[
 	\left\{\widetilde{x}\in\Omega:t < d_{\Omega}(\widetilde{x},\widetilde{O})\right\}.
 	\]
 	No wave has arrived at $\widetilde x$ at time $t$.
 	
 	\item \textbf{Region II:}
 	\[
 	\left\{\widetilde{x}\in\Omega:d_{\Omega}(\widetilde{x},\widetilde{O}) < t < r(\widetilde x)+r(\widetilde O)\right\}.
 	\]
 	The incident wavefront has passed through $\widetilde{x}$ and the diffracted wavefront has not arrived yet.

 	\item \textbf{Region III:}
 	\[
 	\left\{\widetilde{x}\in\Omega:t > r(\widetilde x)+r(\widetilde O)\right\}.
 	\]
 	All wavefronts have passed through $\widetilde{x}$.
 \end{itemize}
 
 %Accordingly, the $\pi$-time spherical wavefront \eqref{geometric spherical wavefront} is obtained as the spherical projection of the forward wavefront $\mathcal F_t$. Equivalently, $\Lambda_M^O$ characterizes the maximal spherical reach on $M$ generated by geometric wave .

 \subsection{The diffraction coefficients of cones}
 \label{subsec:time_domain_diffraction_coefficient}
 
 Using the Cheeger--Taylor functional calculus on the cone $\Omega=C_P(M)$, we identify the principal diffraction coefficient with a spherical half wave on $M$, where $\Delta_\Omega$ and $\Delta_M$ are the positive Laplacians on $\Omega$ and $M$ respectively.
 
 Let $\{\varphi_i\}_{i\in\mathbb{N}}$ be a collection of eigenfunctions of $\Delta_M$ associated with eigenvalue $\lambda_i$ such that $\{\varphi_i\}_{i\in\mathbb{N}}$ forms an orthonormal basis of $L^2(M)$, and define
 $
 \nu_i := \sqrt{\lambda_i+\tfrac14}.
 $
 By \cite[(0.5)--(0.7)]{CT82a}, for any Borel function $f$, the kernel of $f(\sqrt{\Delta_\Omega})$ admits the expansion
 \begin{equation}\label{eqn:functional_calculus}
 	K_{f(\lambda)}(r_1,x_1,r_2,x_2)
 	=
 	(r_1 r_2)^{-1}
 	\sum_{i=0}^\infty
 	\widetilde K_{f(\lambda)}(r_1,r_2,\nu_i)
 	\varphi_i(x_1)\overline{\varphi_i(x_2)},
 \end{equation}
 where
 \[
 \widetilde K_{f(\lambda)}(r_1,r_2,\nu_i)
 :=
 \int_0^\infty
 f(\lambda)
 J_{\nu_i}(\lambda r_1)
 J_{\nu_i}(\lambda r_2)\lambda\,d\lambda,
 \]
 and $J_\nu(x)$ is the Bessel function of first kind. In particular, by applying \eqref{eqn:functional_calculus} to
 \[
 f_k(\lambda)=(\lambda^2-k^2-\imath0)^{-1},
 \] 
 one can obtain the principal frequency-domain diffraction coefficient \eqref{diffraction coefficient}.  Smyshlyaev obtained the same formula by a direct computation in \cite[(3.5)]{Smyshlyaev90}.

 %Let
 %\[
 %\ell(\widetilde x)=|\widetilde O-P|+|\widetilde x-P|
 %\]
 %be the arrival time of diffracted field. The diffracted field near $t=\ell$ is written as
 %\begin{equation*}\label{eqn:time_field}
% 	u^{\mathrm{diff}}(t,\widetilde x,\widetilde O)
% 	=
% 	\frac{1}{|\widetilde O-P|\,|\widetilde x-P|}
% 	d(t-\ell,\widetilde x,\widetilde O),
% \end{equation*}
% where $d$ is the time-domain diffraction coefficient and $d_0$ denotes its principal term at $t=\ell$, in the sense of $t \to \ell$.
 
Fourier transforming the time-domain diffracted field  $u^{\mathrm{diff}}(t,\widetilde x,\widetilde O)$ gives
 \begin{equation*}\label{eqn:freq_field}
 	\widehat u^{\mathrm{diff}}(k,\widetilde x,\widetilde O)
 	=
 	2\pi
 	\frac{e^{ik\ell}}{k|\widetilde O-P|\,|\widetilde x-P|}
 	D(k,\widetilde x,\widetilde O),
 \end{equation*}
 where $D(k,\widetilde x,\widetilde O)$ has the asymptotic expansion for large $k$
 \begin{equation}\label{eqn:dc_expansion}
 	D(k,\widetilde x,\widetilde O)
 	\sim
 	\sum_{m=0}^\infty k^{-m}D_m(\widetilde x,\widetilde O).
 \end{equation}
 
 The time-domain coefficient is obtained by inverse Fourier transform:
 \begin{equation}\label{eqn:time_freq_relation}
 	d(\tau,\widetilde x,\widetilde O)
 	=
 	\mathcal F^{-1}_{k\to\tau}
 	\bigl[k^{-1}D(k,\widetilde x,\widetilde O)\bigr],
 	\qquad \tau=t-\ell.
 \end{equation}
 Applying \eqref{eqn:time_freq_relation} to the leading term of \eqref{eqn:dc_expansion} yields,
 \begin{equation*}\label{eqn:heaviside=}
 	d_0(\tau,\widetilde x,\widetilde O)
 	=
 	D_0(\widetilde x,\widetilde O)\,H(\tau).
 \end{equation*}
 The Fourier transform of the lower-order terms in \eqref{eqn:dc_expansion} is bounded by $\tau_+ := \tau H(\tau)$ up to constant. That is,
 \begin{equation*}\label{eqn:asymptotic_time}
 	d(\tau,\widetilde x,\widetilde O)
 	=
 	D_0(\widetilde x,\widetilde O)\,H(\tau)
 	+
 	O(\tau_+),\qquad \tau\to0.
 \end{equation*}
 In particular,
 \begin{equation}
 	\label{eqn:arrival_time_d0_half_wave}
 	d_{0}(0,\widetilde{x},\widetilde{O})
 	=
 	D_0(\widetilde x,\widetilde O)
 	=
 	\left(
 	\exp\left(-\imath\pi\sqrt{\Delta_M+\tfrac14}\right)\delta_O
 	\right)(x).
 \end{equation}
 Hence the principal time-domain diffraction coefficient coincides with the $-\pi$-time spherical half wave on $M$.
 
 \subsection{The spherical half wave on $M$}
 
Assuming the tip \(P\) is known, we now formulate the precise reduction from the diffraction coefficients to the spherical half wave at time $-\pi$.

 \begin{proof}[Proof of Proposition \ref{prop:diffraction_to_half_wave}]
 	%Since the tip $P$ is the same, the projection $\pi_{\mathbb S^2}^{P}$ is common to both cones. Thus, for every $\widetilde x\in S$, $x$ and $O$ are fixed.

 	In view of \eqref{eqn:arrival_time_d0_half_wave}, we see that either \eqref{eqn:frequency-domain diffraction coefficients agree} or \eqref{eqn:time-domain diffraction coefficients agree} implies \eqref{eqn : exp half wave on S}. Taking the imaginary parts gives \eqref{eqn : sine half wave on S}.
 \end{proof}

 \section{Spherical geometry with obstacles}\label{sec : spher geom}

Following spherical reduction, subsequent inversion is performed exclusively on $M$. We briefly pause the reconstruction process to derive key topological results for spherical wavefronts in $M$.

%In what follows, we always assume that $N\subset \mathbb{S}^2$ is strictly convex and $M=\mathbb{S}^2\setminus N$. Some technical lemmas are collected in Appendix~\ref{appendix:reflected_rays}.

\subsection{Extended spherical geodesics and  rays}\label{sec:preliminaries_spherical_conic}

%Accordingly, the $\pi$-time spherical wavefront \eqref{geometric spherical wavefront} is obtained as the spherical projection of the forward wavefront $\mathcal F_t$. Equivalently, $\Lambda_M^O$ characterizes the maximal spherical reach on $M$ generated by geometric wave .

The spherical wavefronts are comprised of spherical rays in $M$. Geometrically, they are extended spherical geodesics in $M$.

For $(x_0,\xi_0)\in T\mathbb{S}^2$ with $|\xi_0|_{\mathbb{S}^2}=1$, let $\gamma_{(x_0,\xi_0)}(t)$ denote the unit-speed spherical geodesic emanating from $x_0$, where $t$ is the arclength parameter. Then there holds 
\begin{equation*}
	\left\{
	\begin{aligned}
		\gamma_{(x_0,\xi_0)}(t) &= x(t), && \gamma_{(x_0,\xi_0)}(0)=x_0,\\
		\dot\gamma_{(x_0,\xi_0)}(t) &= \xi(t), && \dot\gamma_{(x_0,\xi_0)}(0)=\xi_0 .
	\end{aligned}
	\right.
\end{equation*}
It is well-known that the geodesics on $\mathbb{S}^2$ are great circles.

Since the obstacle $N$ is a strictly convex subset of $\mathbb{S}^2$, any great circle tangent to \(\partial N\) intersects \(\partial N\) solely at the point of tangency (see Figure~\ref{fig:tangent_great_circle}).
\begin{lemma}\label{great circle intersection}
	Let $C_x \subset \mathbb{S}^2$ be a great circle tangent to
	$\partial N$ at some point $x \in \partial N$. If $S_{P_1}$ and
	$S_{P_2}$ are the two open hemispheres divided by $C_x$, then either
	$N \subset \overline{S}_{P_1}$ or $N \subset \overline{S}_{P_2}$.
	Moreover,
$
		C_x \cap N = \{x\}.
$
\end{lemma}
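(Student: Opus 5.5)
We lift the problem to $\mathbb{R}^3$, where great circles become planes and geodesic convexity becomes convexity of a cone. Regard $\mathbb{S}^2\subset\mathbb{R}^3$ and let $\widehat N:=\{r\omega:\ \omega\in N,\ r\ge 0\}$. Because $N$ is geodesically convex and contained in an open hemisphere, $\widehat N$ is a closed convex cone that is pointed and contains no line. The great circle $C_x$ equals $\Pi\cap\mathbb{S}^2$ for a plane $\Pi$ through the origin. Let $\nu$ be a unit normal of $\Pi$, so that $S_{P_1}=\{\langle\omega,\nu\rangle>0\}$ and $S_{P_2}=\{\langle\omega,\nu\rangle<0\}$.

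\emph{First assertion.} The tangency of $C_x$ to $\partial N$ at $x$ means that the tangent line of $\partial C_P(N)$ along the ray through $x$ lies in $\Pi$. Since $\partial N$ is smooth, $\partial\widehat N$ is smooth away from $0$, so $\Pi$ is the tangent plane of $\partial\widehat N$ along the ray $\mathbb{R}_+x$. For a convex body with smooth boundary, the tangent plane at a boundary point is a supporting plane. Hence $\langle y,\nu\rangle$ has constant sign on $\widehat N$, and therefore $N\subset\overline S_{P_1}$ or $N\subset\overline S_{P_2}$.

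To make this self-contained on the sphere, one can argue by contradiction. Suppose $N$ meets both open hemispheres. By geodesic convexity and connectedness, $N$ contains a short geodesic arc crossing $C_x$ transversally near some point. Moving toward $x$ contradicts the fact that $T_x\partial N=T_xC_x$ together with convexity. The supporting-plane argument in $\mathbb{R}^3$ is cleaner, so that is the route we take.

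\emph{Second assertion.} This is the step where strict convexity is needed. Suppose $y\in C_x\cap N$ with $y\neq x$, and assume without loss of generality that $N\subset\overline S_{P_1}$.
\begin{enumerate}
\item Since $N$ lies in an open hemisphere, $y\neq -x$, so there is a unique minor geodesic arc $\sigma\subset C_x$ joining $x$ to $y$.
\item By geodesic convexity, $\sigma\subset N$.
\item Every point of $\sigma$ lies on $C_x=\partial S_{P_1}$, while $N\subset\overline S_{P_1}$. Thus no point of $\sigma$ is interior to $N$ relative to $\mathbb{S}^2$, and hence $\sigma\subset\partial N$.
\item So $\partial N$ contains a nontrivial geodesic segment, along which its geodesic curvature vanishes. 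This contradicts the assumption that $\partial N$ has strictly positive geodesic curvature.
\end{enumerate}
Therefore $C_x\cap N=\{x\}$.

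The main subtlety is justifying the supporting-plane property from tangency alone, which uses the convexity of $\widehat N$. Once that is in place, strict positivity of the curvature handles the uniqueness of the contact point.
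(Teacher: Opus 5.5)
Your proof is correct, and it supplies more than the paper does. The paper gives no proof of this lemma: it only remarks that the statement follows from strict convexity of $N$ and points to a picture.

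Your route makes the two ingredients explicit and keeps them separate. The hemisphere containment uses only geodesic convexity and smoothness of $\partial N$. You lift to the closed convex cone $\widehat N$, identify $\Pi=\operatorname{span}(x,T_x\partial N)$ with the tangent plane of $\partial\widehat N$ along $\mathbb{R}_+x$, and use that at a $C^1$ boundary point of a closed convex set with nonempty interior the tangent plane is the unique supporting plane. The single-point contact is exactly where strict positivity of the geodesic curvature is needed: a smooth convex $N$ with a great-circle arc in its boundary would violate it.

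When you write it up, add a line for each of the following.
\begin{itemize}
\item $\widehat N$ is convex because two points of $N$ are never antipodal. The normalized chord between $r_1\omega_1$ and $r_2\omega_2$ therefore runs along the minor arc from $\omega_1$ to $\omega_2$, which lies in $N$.
\item Your sentence about ``the tangent line of $\partial C_P(N)$'' should instead say that $x\in\Pi$ and $T_x\partial N=T_xC_x\subset\Pi$.
\item In step 4, note that the relative interior of $\sigma$ is an open sub-arc of the embedded curve $\partial N$. So $\partial N$ locally coincides with a great circle there, and its geodesic curvature vanishes.
\end{itemize}
The sketched ``self-contained on the sphere'' alternative is not needed and can be dropped.
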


\begin{figure}[htbp]
	\centering
	\begin{tikzpicture}[scale=0.8,line cap=round,line join=round]
		% sphere
		\draw[black, thick] (0,0) circle (3);
		
		% tangent great circle
		\draw[blue!70!black, thick]
		(-2.9,-0.45) arc[start angle=180,end angle=360,x radius=2.9,y radius=0.52];
		\draw[blue!70!black, dashed, thick]
		(-2.9,-0.45) arc[start angle=180,end angle=0,x radius=2.9,y radius=0.52];
		\node[blue!70!black] at (-1.35,-0.7) {$C_x$};
		
		% obstacle N
		\fill[pattern=north east lines, pattern color=red!75]
		(0,-1)
		.. controls (0.85,-1) and (1.15,-1.20) .. (1.05,-2.00)
		.. controls (0.95,-2.75) and (0.20,-3.00) .. (-0.55,-2.55)
		.. controls (-1.00,-2.20) and (-1.02,-1.15) .. (-0.72,-1.1)
		.. controls (-0.48,-1) and (-0.20,-1) .. (0,-1);
		
		\draw[red!80!black, thick]
		(0,-1)
		.. controls (0.85,-1) and (1.15,-1.20) .. (1.05,-2.00)
		.. controls (0.95,-2.75) and (0.20,-3.00) .. (-0.55,-2.55)
		.. controls (-1.00,-2.20) and (-1.02,-1.15) .. (-0.72,-1.1)
		.. controls (-0.48,-1) and (-0.20,-1) .. (0,-1);
		
		% tangency point
		\fill (0,-1) circle (1.6pt);
		\node[above left] at (0,-1) {$x$};
		
		% labels
		\node at (-1.0,1.15) {$M$};
		\node[red!80!black] at (0.18,-1.8) {$N$};
		\node at (1.85,0.80) {$S_{P_1}$};
		\node at (1.65,-1.55) {$S_{P_2}$};
	\end{tikzpicture}
	\caption{A tangent great circle meets \(N\) only at the tangency point.}
	\label{fig:tangent_great_circle}
\end{figure}

 As shown in Figure~\ref{fig:reflection-grazing-rays}, spherical reflections occur within $M$, and we employ extended spherical geodesics to characterize wave propagation trajectories on $M$.
\begin{figure}[htbp]
	\centering
	\includegraphics[width=0.7\textwidth]{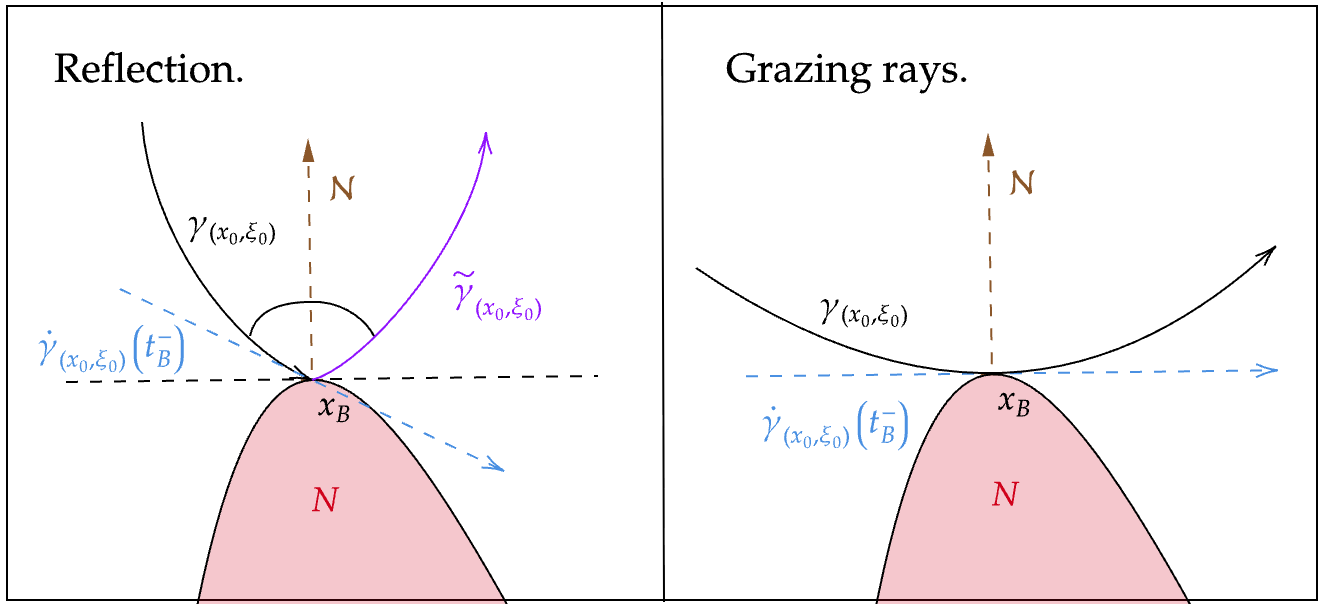}
	\caption{Reflection and grazing rays.}
	\label{fig:reflection-grazing-rays}
\end{figure}

\begin{definition}\label{extended spherical geodesic flow}
The  {extended spherical geodesic}  $\gamma^B_{(x_0,\xi_0)}(t):\mathbb{R}_+\to\overline M$ with initial data   $(x_0,\xi_0)\in SM$ is a  continuous curve, constructed as follows.
	
	\begin{itemize}
		\item 		Before hitting the boundary or if never meeting $\partial M$, $\gamma^B_{(x_0,\xi_0)}$ agrees with $\gamma_{(x_0,\xi_0)}$, i.e.
		$$
		\gamma^B_{(x_0,\xi_0)}(t):=\gamma_{(x_0,\xi_0)}(t), \qquad \mbox{for $t\in[0,t_B]$ or $t \in [0, \infty)$,} 
		$$   where $t_B>0$ is the first hitting time such that
		$ 
		x_B:=\gamma_{(x_0,\xi_0)}(t_B)\in\partial M$ and 
		$\gamma_{(x_0,\xi_0)}([0,t_B))\subset M.
		$
		\item   If reflection occurs at time $t_B$, i.e.
		$$\dot\gamma_{(x_0,\xi_0)}^{B}(t_B - 0) := \lim_{t\uparrow t_B}\dot\gamma^B_{(x_0,\xi_0)}(t)\notin S_{x_B}\partial M,$$
		we extend $\gamma_{(x_0,\xi_0)}^{B}$ by specular reflection. For $t>t_B$, let $\widetilde\gamma_{(x_0,\xi_0)}(t)$ denote the spherical geodesic in $\mathbb{S}^2$ with initial data 
		$\widetilde\gamma_{(x_0,\xi_0)}(t_B) = x_B$ and
		\begin{equation}\label{eqn:reflection vector}
			\dot{\widetilde\gamma}_{(x_0,\xi_0)}(t_B)
			=
			\dot{\gamma}_{(x_0,\xi_0)}^B(t_B-0)
			-2\langle \dot{\gamma}_{(x_0,\xi_0)}^B(t_B-0),\mathcal N(x_B)\rangle_{\mathbb S^2}
			\mathcal N(x_B)\in T_{x_B}\mathbb{S}^2
		\end{equation}
		where $\mathcal N(x_B)$ is the inward unit normal to $M$ along $\partial M$ at $x_B$. We then set $\gamma_{(x_0,\xi_0)}^B(t):=\widetilde\gamma_{(x_0,\xi_0)}(t)$ for $t\ge t_B$ up to the next intersection with $\partial M$.
		
		\item   If $\dot\gamma_{(x_0,\xi_0)}^{B}(t_B-0)\in S_{x_B}\partial M$, we extend $\gamma^B_{(x_0,\xi_0)}$ along the corresponding great circle on $\mathbb{S}^2$. By Lemma~\ref{great circle intersection}, 
		$$
		\gamma_{(x_0,\xi_0)}(\mathbb{R}_+)\cap\partial M=\{x_B\}.
		$$
		
		\item Iterating this construction yields a continuous extended curve defined for all $t\ge 0$.
	\end{itemize}
	For $O\in M$, we define the spherical wavefront at time $\pi$ by
	\begin{equation*}\label{geometric spherical wavefront}
		\Lambda_M^O:=\left\{x\in M: x=\gamma^B_{(O,\xi)}(\pi)\ \text{for some }|\xi|_{\mathbb{S}^2}=1\right\}.
	\end{equation*}

\end{definition}

By the strict convexity of $N$, an extended spherical geodesic meets $\partial M$ at most once for $t\in[0,\pi]$. 

\begin{proposition}\label{counting intersection}
	Let $\gamma^B_{(x_0,\xi_0)}:\mathbb{R}_+\to\overline M$ be an extended spherical geodesic. Then
	$$
	\#\bigl\{t\in[0,\pi]:\gamma^B_{(x_0,\xi_0)}(t)\in\partial M\bigr\}\le 1 .
	$$
\end{proposition}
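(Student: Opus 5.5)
The plan is to analyse the extended geodesic from its first boundary contact and show that, whatever happens there, it cannot return to $\partial M=\partial N$ before time $\pi$. The key tool is Lemma~\ref{great circle intersection}, applied at the first contact point.

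\textbf{First contact.} Since $x_0\in M$ and $M$ is open, the first hitting time satisfies $t_B>0$; if there is no hitting time there is nothing to prove. Let $x_B:=\gamma^B_{(x_0,\xi_0)}(t_B)\in\partial M$, and let $C_{x_B}$ be the great circle tangent to $\partial N$ at $x_B$. By Lemma~\ref{great circle intersection}, $N$ lies in one closed hemisphere bounded by $C_{x_B}$, say $\overline S_{P_2}$, and $C_{x_B}\cap N=\{x_B\}$. Hence the complementary open hemisphere $S_{P_1}$ lies in $M$. The inward unit normal $\mathcal N(x_B)$ to $M$ is tangent to $\mathbb S^2$ and orthogonal to $C_{x_B}$, and it points into $S_{P_1}$.

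\textbf{Grazing case.} Suppose $\dot\gamma^B(t_B-0)\in S_{x_B}\partial M$. Then the curve continues along $C_{x_B}$ itself. By Lemma~\ref{great circle intersection}, this great circle meets $N$ only at $x_B$, so the geodesic meets $\partial M$ only at $x_B$ for all $t$. The count is therefore $1$.

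\textbf{Reflection case.} Here $\langle\dot\gamma^B(t_B-0),\mathcal N(x_B)\rangle<0$, since the ray is arriving at the boundary. By \eqref{eqn:reflection vector}, the reflected velocity has inner product $-\langle\dot\gamma^B(t_B-0),\mathcal N(x_B)\rangle>0$ with $\mathcal N(x_B)$. So the reflected great circle $\widetilde\gamma$ leaves $x_B$ transversally into the open hemisphere $S_{P_1}$.

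\textbf{Main step.} A great circle through a point of $C_{x_B}$ that is transversal to $C_{x_B}$ meets $C_{x_B}$ exactly at two antipodal points. Consequently $\widetilde\gamma(t)\in S_{P_1}$ for all $t\in(t_B,t_B+\pi)$. Since $S_{P_1}\cap N=\emptyset$, the reflected arc avoids $\partial M$ on $(t_B,t_B+\pi)$. Because $t_B>0$, we have $\pi<t_B+\pi$, so $[t_B,\pi]$ lies inside $[t_B,t_B+\pi)$. Thus on $(t_B,\pi]$ there is no further boundary contact, and the iteration in Definition~\ref{extended spherical geodesic flow} produces no second hit in $[0,\pi]$.

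The one point requiring care is the sign of the inner product with $\mathcal N(x_B)$, which must place the reflected ray into the hemisphere disjoint from $N$. This follows once we note that $\mathcal N(x_B)$ points into $M$ and that, near $x_B$, $M$ contains the side of $C_{x_B}$ opposite to $N$, again by Lemma~\ref{great circle intersection}.
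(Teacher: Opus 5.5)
Your proof is correct and follows the same route as the paper: apply Lemma~\ref{great circle intersection} at the first contact point $x_B$, then observe that the reflected great-circle arc stays in the open hemisphere disjoint from $N$ on $(t_B,t_B+\pi)$, an interval that contains $(t_B,\pi]$. Beyond the paper's version, you make explicit the grazing case, the sign $\langle\dot\gamma^B(t_B-0),\mathcal N(x_B)\rangle<0$ that sends the reflected ray into the correct hemisphere, and the inequality $\pi<t_B+\pi$.
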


\begin{proof}
	
\begin{figure}[htbp]
	\centering
	\begin{tikzpicture}[scale=0.8,line cap=round,line join=round]
		% sphere
		\draw[black, thick] (0,0) circle (3);
		
		% tangent great circle through x_B=(0,-1)
		\draw[blue!70!black, thick]
		(-2.9,-0.45) arc[start angle=180,end angle=360,x radius=2.9,y radius=0.55];
		\draw[blue!70!black, dashed, thick]
		(-2.9,-0.45) arc[start angle=180,end angle=0,x radius=2.9,y radius=0.55];
		\node[blue!70!black] at (-1.45,-0.68) {$C_{x_B}$};
		
		% obstacle N tangent at x_B=(0,-1)
		\fill[pattern=north east lines, pattern color=red!75]
		(0,-1)
		.. controls (0.85,-1) and (1.15,-1.20) .. (1.05,-2.00)
		.. controls (0.95,-2.75) and (0.20,-3.00) .. (-0.55,-2.55)
		.. controls (-1.00,-2.20) and (-1.02,-1.15) .. (-0.72,-1.10)
		.. controls (-0.48,-1) and (-0.20,-1) .. (0,-1);
		
		\draw[red!80!black, thick]
		(0,-1)
		.. controls (0.85,-1) and (1.15,-1.20) .. (1.05,-2.00)
		.. controls (0.95,-2.75) and (0.20,-3.00) .. (-0.55,-2.55)
		.. controls (-1.00,-2.20) and (-1.02,-1.15) .. (-0.72,-1.10)
		.. controls (-0.48,-1) and (-0.20,-1) .. (0,-1);
		
		% reflection point
		\fill (0,-1) circle (1.6pt);
		\node[above left] at (0.55,-0.9) {$x_B$};
		
		% antipodal point of x_B
		\fill (0.2,0.1) circle (1.6pt);
		\node[above right] at (-0.3,0.3) {$-x_B$};
		
		% incoming ray
		\draw[violet, very thick, ->]
		(-1.6,2.25) .. controls (-1.55,1.35) and (-1.42,0.00) .. (0,-1);
		
		% reflected geodesic: a half great-circle through x_B and its antipodal point
		\draw[violet, very thick, ->]
		(0,-1) .. controls (3.6,1.45) and (3.55,2.10) .. (0.2,0.1);

		% annotation for the reflected arc
		\node[violet, align=left] at (6.2,-0.45) 
		{\small a half great circle\\\small in $S_{P_2}$, of length $\pi$};
		\draw[violet, ->, thin] (6.05,0.5) -- (2.75,1.45);
		
		% labels for regions
		\node at (-1.05,1.15) {$M$};
		\node[red!80!black] at (0.15,-1.85) {$N$};
		\node at (1.8,1.65) {$S_{P_2}$};
		\node at (1.8,-1.65) {$S_{P_1}$};
		
		% label for the extended spherical geodesic
		\node[violet] at (-2.1,0.75) {$\gamma^B_{(x_0,\xi_0)}$};
	\end{tikzpicture}
	\caption{After reflection, the ray $\gamma^B_{(x_0,\xi_0)}$ stays in $S_{P_2}$ up to time $\pi$, while $N\subset \overline{S}_{P_1}$.}
	\label{fig:one_intersection_pi}
\end{figure}

	As shown in Figure~\ref{fig:one_intersection_pi}, an extended spherical geodesic $\gamma^B_{(x_0,\xi_0)}$ first intersects $\partial N$ at $x_B$ at time $t_B$. The great circle $C_{x_B}$ partitions the sphere into two open hemispheres $S_{P_1}$ and $S_{P_2}$. Without loss of generality, assume $N \subset \overline{S}_{P_1}$ by Lemma~\ref{great circle intersection}. By the reflection law, the reflected segment $\gamma^B_{(x_0,\xi_0)}((t_B,t_B+\pi))$ remains in $S_{P_2}$. Since $N\cap S_{P_2}=\emptyset$, there is no second hitting within time \(\pi\).
\end{proof}

Proposition~\ref{counting intersection} motivates the definition of reflected spherical rays.

\begin{definition}\label{the classification of ray}
	Assume $\gamma^B_{(x_0,\xi_0)}(t)\in\partial M$ for some $t\in[0,\pi]$. We denote its first hitting time, first hitting point, and incident direction by
	\begin{equation*}
		\begin{aligned}
			t_B(\xi_0) &:= \min\{t>0:\gamma^B_{(x_0,\xi_0)}(t)\in\partial M\},\\
			x_B(\xi_0) &:= \gamma^B_{(x_0,\xi_0)}(t_B(\xi_0))\in\partial M,\\
			\xi_B(\xi_0)
			&:= \lim_{t\uparrow t_B(\xi_0)} \dot\gamma^{B}_{(x_0,\xi_0)}(t)
			= \dot\gamma_{(x_0,\xi_0)}(t_B(\xi_0))
			\in T_{x_B(\xi_0)}\mathbb S^2 .
		\end{aligned}
	\end{equation*} 
	An extended spherical geodesic $\gamma^B_{(x_0,\xi_0)}(t)$ is called a  {reflected spherical ray} if
	$$
	\#\left\{t\in(0,\pi):\gamma^B_{(x_0,\xi_0)}(t)\in\partial M\right\}=1
	\quad\text{and}\quad
	\xi_B(\xi_0)\notin T_{x_B(\xi_0)}\partial M .
	$$
\end{definition}

The following proposition shows that distinct reflected spherical rays cannot intersect up to time $\pi$.

\begin{proposition}\label{one ray intersection}
	Let $\gamma^B_{(O,\xi_1)}$ and
	$\gamma^B_{(O,\xi_2)}$ be reflected spherical rays on $M$. If there exists $t_0\in(0,\pi]$ such that
	$
	\gamma^B_{(O,\xi_1)}(t_0)=\gamma^B_{(O,\xi_2)}(t_0),
	$
	then $\xi_1=\xi_2$.% Conversely, if $\xi_1\neq \xi_2$, then there holds that	\begin{equation}\label{eqn:second statement}
%		\gamma^B_{(O,\xi_1)}\bigl([t_B(\xi_1),\pi]\bigr)		\cap		\gamma^B_{(O,\xi_2)}\bigl([t_B(\xi_2),\pi]\bigr)=\emptyset.	\end{equation}
\end{proposition}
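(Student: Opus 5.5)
The plan is a case analysis on whether the common point $x:=\gamma^B_{(O,\xi_1)}(t_0)=\gamma^B_{(O,\xi_2)}(t_0)$ is reached before or after reflection along each ray. Write $t_i:=t_B(\xi_i)\in(0,\pi)$ and $x_i:=x_B(\xi_i)$. Two elementary facts on $\mathbb S^2$ are used throughout.
\begin{enumerate}
\item[(a)] A great-circle arc of length $<\pi$ is the unique minimizing geodesic between its endpoints. Consequently $\exp_O$ is injective on the open ball of radius $\pi$.
\item[(b)] A piecewise geodesic curve of length $\le\pi$ with a genuine corner has length strictly larger than the distance between its endpoints. This holds because a curve of length $\le\pi$ realizing the distance is a minimizing geodesic, hence smooth.
\end{enumerate}

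\emph{Case 1: both rays are unreflected at $t_0$}, i.e. $t_0\le t_1,t_2$. Since $t_i<\pi$, also $t_0<\pi$. Then $\exp_O(t_0\xi_1)=\exp_O(t_0\xi_2)$, and fact (a) gives $\xi_1=\xi_2$.

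\emph{Case 2: exactly one ray is reflected at $t_0$}, say $t_1<t_0\le t_2$. Then $t_0<\pi$, and along the second ray $d_{\mathbb S^2}(O,x)=t_0$ by fact (a). The first ray is the broken path $O\to x_1\to x$ of length $t_0\le\pi$. Its corner at $x_1$ is genuine, because $\xi_B(\xi_1)\notin T_{x_1}\partial M$ and the reflection in \eqref{eqn:reflection vector} therefore changes the direction. Fact (b) then gives $d_{\mathbb S^2}(O,x)<t_0$, a contradiction.

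\emph{Case 3: both rays are reflected at $t_0$.} This is the main step. For this fixed $x$, I would show that the reflection point is uniquely determined as the unique minimizer over $N$ of
\[
f_x(b):=d_{\mathbb S^2}(O,b)+d_{\mathbb S^2}(b,x).
\]

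\emph{Step 3a: setup.} Fix $i$ and let $C:=C_{x_i}$ be the tangent great circle at $x_i$, with $N\subset\overline S_{P_1}$ by Lemma~\ref{great circle intersection}. The incoming segment lies in $S_{P_2}$ near $x_i$. As in the proof of Proposition~\ref{counting intersection}, the outgoing segment stays in $S_{P_2}$ up to time $\pi$, so $x\in S_{P_2}$.

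\emph{Step 3b: unfolding.} Let $R$ be the reflection of $\mathbb S^2$ across $C$, which is an isometry, and set $x':=R(x)\in S_{P_1}$. By the reflection law, $R$ maps the outgoing arc $x_i\to x$ onto the continuation through $x_i$ of the incoming great circle. Hence $O\to x_i\to x'$ is a single great-circle arc of length $t_0\le\pi$, and $d(O,x')=t_0$. If $t_0=\pi$ one uses only that this arc has length $\pi\ge d$, which suffices below.

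\emph{Step 3c: minimization.} Take $b\in N\subset\overline S_{P_1}$. Since $b$ and $x'$ lie on the same closed side of $C$, we have $d(b,x')\le d(b,x)$, with equality only when $b\in C$. Therefore
\[
f_x(b)\ge d(O,b)+d(b,x')\ge d(O,x')=t_0=f_x(x_i).
\]
Equality forces $b\in C\cap N=\{x_i\}$, again by Lemma~\ref{great circle intersection}. When $t_0=\pi$, i.e. $x'=-O$, I would double-check this equality step, since the geodesic from $O$ to $-O$ is not unique; I expect it still works using $b\in C$ and $C\cap N=\{x_i\}$.

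\emph{Step 3d: conclusion.} Step 3c shows $x_i$ is the unique minimizer of $f_x$ on $N$, for both $i=1,2$, so $x_1=x_2$. The incoming arcs from $O$ to $x_1=x_2$ have lengths $t_1,t_2<\pi$. By fact (a) each of them is the unique minimizing geodesic from $O$ to this point, so both lengths equal $d(O,x_1)$ and the two arcs coincide. Their initial directions are therefore equal, giving $\xi_1=\xi_2$.

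The step I expect to be most delicate is the comparison $d(b,x')\le d(b,x)$ in Step 3c: for points on the same side of a great circle, the reflected point is closer. I would prove it by noting that the minimizing geodesic from $b$ to $x$ crosses $C$ at some point $c$, and then $d(b,x')\le d(b,c)+d(c,x')=d(b,c)+d(c,x)=d(b,x)$. The equality and strictness bookkeeping in the boundary case $t_0=\pi$ also needs care.
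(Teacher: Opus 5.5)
Your proof is correct. Cases 1 and 2 are the same as the paper's; Case 3 takes a genuinely different route.

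\textbf{How the paper does Case 3.} It argues by contradiction. Assuming $x_1\neq x_2$, it observes that ray 2 must recross $C_{x_1}$ after $x_2$, since $x_2\in S_{P_1}$ while the common point lies in $S_{P_2}$. It then reflects the tail of ray 2, after its last crossing time $t_{\mathrm{out}}$, across $C_{x_1}$. This produces a curve of length $t_0$ from $O$ to $\gamma_{(O,\xi_1)}(t_0)$, a point at distance exactly $t_0$ from $O$. That curve is therefore length minimizing, yet it keeps the genuine corner at $x_2$, which is the same contradiction as in Case 2.

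\textbf{How your Case 3 differs.} You reflect only the endpoint and prove a Heron/Fermat-type characterization: $x_i$ is the unique minimizer over $N$ of $b\mapsto d_{\mathbb S^2}(O,b)+d_{\mathbb S^2}(b,x)$, with minimum value $t_0$. This gives $x_1=x_2$ without path surgery or tracking of crossing times. Your version is somewhat more informative: it shows directly that the travel time $t(x,O)$ equals $t_0$ and is attained at a unique boundary point, which is what the paper later needs in the proof of Proposition~\ref{lemma:reflection_shadow_visible_characterization}. The paper's version gains uniformity, since Cases 2 and 3 both reduce to the single principle that length minimizers have no corners.

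\textbf{The boundary case $t_0=\pi$.} It needs no extra care. There $x'=\gamma_{(O,\xi_i)}(\pi)=-O$, so $d_{\mathbb S^2}(O,x')=\pi=t_0$ holds exactly. Note that Step 3c needs $d_{\mathbb S^2}(O,x')\ge t_0$, so your parenthetical ``length $\pi\ge d$'' points the wrong way, although this is moot. The equality case uses only the strict inequality $d_{\mathbb S^2}(b,x')<d_{\mathbb S^2}(b,x)$ for $b\in N\setminus C$, which does not involve $t_0$. Non-uniqueness of geodesics from $O$ to $-O$ never enters.

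\textbf{Strictness of the mirror inequality.} Your crossing-point argument, as written, gives only the non-strict inequality. The quickest fix is to take $C$ as the equator $\{z=0\}$ and use $\cos d_{\mathbb S^2}(b,y)=\langle b,y\rangle_{\mathbb R^3}$. Then $\langle b,x'\rangle_{\mathbb R^3}-\langle b,x\rangle_{\mathbb R^3}=-2\,b_z x_z$, where $b_z,x_z$ are third coordinates. This is strictly positive when $b_z$ and $x_z$ have strictly opposite signs, and it vanishes when $b\in C$.
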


\begin{proof}
	Let $p$ be the first intersection point, i.e.
	\begin{equation}\label{eqn: equal xi_1 and xi_2}
		\gamma^B_{(O,\xi_1)}(t_0)=\gamma^B_{(O,\xi_2)}(t_0)=p.
	\end{equation}
	We split the proof into three cases, according to whether neither, exactly one, or both rays have reflected before \(t_0\).
	
	\medskip
	\noindent\textbf{Case 1: no reflection before $t_0$.}
	Both of $\gamma^B_{(O,\xi_1)}$ and $\gamma^B_{(O,\xi_2)}$ are minimizing spherical geodesics from $O$ to $p$ of length $t_0<t_B(\xi_1)<\pi$. 
	By uniqueness of minimizing geodesics of length less than $\pi$ on $\mathbb{S}^2$, it follows that $\xi_1=\xi_2$.
	
	\medskip
	\noindent\textbf{Case 2: only one ray reflects.}
	Assume $\gamma^B_{(O,\xi_1)}$ reflects before $t_0$, while $\gamma^B_{(O,\xi_2)}$ does not. Then
	\[
	d_{\mathbb{S}^2}(O,p)=t_0,
	\]
	and $\gamma^B_{(O,\xi_1)}([0,t_0])$ has the same length, hence is also length minimizing. This contradicts the smoothness of length-minimizing geodesics, since  $\gamma^B_{(O,\xi_1)}$ is nonsmooth at $x_B(\xi_1)$ by the reflection law \eqref{eqn:reflection vector}.

	\medskip
	\noindent\textbf{Case 3: both rays reflect.}
	Let $x_1,x_2\in\partial M$ be the first reflection points of
	$\gamma^B_{(O,\xi_1)}$ and $\gamma^B_{(O,\xi_2)}$, respectively, and let
	$C_{x_1}$ be the tangent great circle to $N$ at $x_1$. Assume $x_1\neq x_2$.
	By Lemma~\ref{great circle intersection}, \(C_{x_1}\) meets \(\partial M\) only at \(x_1\) and divides $\mathbb{S}^2$ into $S_{P_1}$ and $S_{P_2}$. Without loss of generality, assume \(N\subset \overline{S}_{P_1}\). Since  
	$
	\gamma^B_{(O,\xi_1)}((t_B(\xi_1),\pi))\subset S_{P_2}$ and $x_2\in N\setminus\{x_1\}\subset S_{P_1},
	$ we see that
	$\gamma^B_{(O,\xi_2)}$ must intersect $C_{x_1}$ again after $x_2$ and before reaching $p$.
	
	Set
	\[
	t_{\mathrm{out}}
	:=
	\sup\{s\in[0,t_0]:
	\gamma^B_{(O,\xi_2)}(s)\in C_{x_1}\},
	\qquad
	x_{\mathrm{out}}
	:=
	\gamma^B_{(O,\xi_2)}(t_{\mathrm{out}}).
	\]
	We now extend $\gamma^B_{(O,\xi_2)}$ beyond $t_{\mathrm{out}}$ by the unique spherical geodesic in $\mathbb S^2$ solving
	\[
	\gamma(t_{\mathrm{out}})=x_{\mathrm{out}},\qquad
	\dot\gamma(t_{\mathrm{out}})
	=
	\dot{\gamma}^B_{(O,\xi_2)}(t_{\mathrm{out}})
	-2\big\langle \dot{\gamma}^B_{(O,\xi_2)}(t_{\mathrm{out}}),
	\nu_{x_1}(x_{\mathrm{out}})\big\rangle_{\mathbb S^2}
	\nu_{x_1}(x_{\mathrm{out}}),
	\]
	where $\nu_{x_1}(x_{\mathrm{out}})$ is the unit normal to $C_{x_1}$ at $x_{\mathrm{out}}$ pointing toward $S_{P_1}$.
	Denote this extension by $\gamma_{(O,\xi_2)}^{B,\mathrm{re}}$.
	
	By construction,   
	\(\gamma_{(O,\xi_2)}^{B,\mathrm{re}}\) for \(t>t_{\mathrm{out}}\) is obtained by specular reflection across \(C_{x_1}\), as illustrated in Figure~\ref{fig:reflected_curve_Cx1}.
	Indeed, in Cartesian coordinates where the plane containing \(C_{x_1}\) is the \(xy\)-plane, this reflection is given by
	\[
	(x,y,z)\mapsto(x,y,-z).
	\]
	Since \(\gamma^B_{(O,\xi_1)}\) is also obtained by reflection across \(C_{x_1}\) at \(x_1\) after $t_B(\xi_1)<t_0$, it follows from \eqref{eqn: equal xi_1 and xi_2} that
	\[
	\gamma_{(O,\xi_2)}^{B,\mathrm{re}}(t_0)=\gamma_{(O,\xi_1)}(t_0),\qquad t_{\mathrm{out}}<t_0\leq \pi.
	\]
	Thus the segment $\gamma_{(O,\xi_2)}^{B,\mathrm{re}}([0,t_0])$ is also length minimizing from $O$ to $\gamma_{(O,\xi_1)}(t_0)$. This gives the same contradiction as in Case 2: the reflection creates a corner on a length-minimizing geodesic.
	
	Therefore, both rays must have the same first reflection point $x_1=x_2$, which implies $\xi_1=\xi_2$.\end{proof}
	
	\begin{figure}
		\centering
		\includegraphics[width=0.8\textwidth]{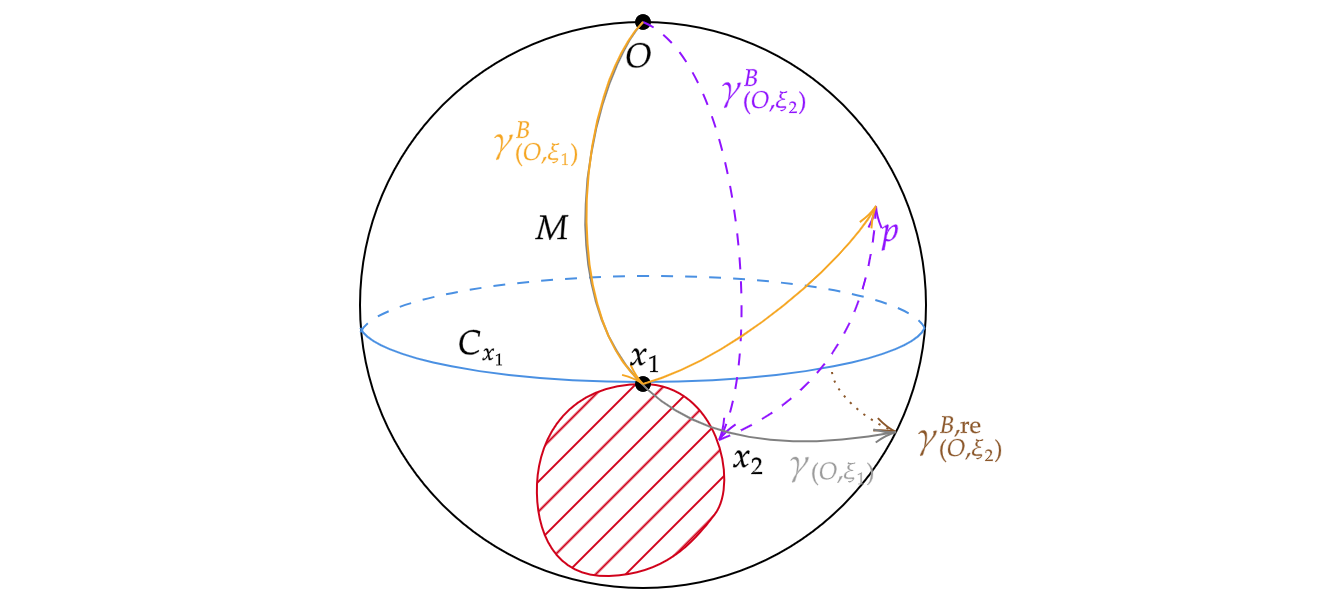}
		\caption{Reflection across the tangent great circle \(C_{x_1}\).}
		\label{fig:reflected_curve_Cx1}
	\end{figure}
	
%	For the second statement, suppose that there exist	\[	t_1\in[t_B(\xi_1),\pi],\qquad t_2\in[t_B(\xi_2),\pi]\]	such that $\gamma^B_{(O,\xi_1)}(t_1)=\gamma^B_{(O,\xi_2)}(t_2).$ Without loss of generality, assume $t_1\ge t_2$. The same argument as in Case 3 gives \(t_1=t_2\) and hence $\xi_1=\xi_2$ by the first statement. This proves \eqref{eqn:second statement}.

\subsection{The spherical wavefront and its topological structure}

We next study the structure of the spherical wavefront $\Lambda_M^O$.

\begin{proposition}\label{stratification}
	If $\partial M$ is smooth, then $\Lambda_M^O\setminus\{-O\}$ is a smooth curve in $\mathbb{S}^2$.
\end{proposition}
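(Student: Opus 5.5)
We parametrize $\Lambda_M^O$ by the initial direction. Let $\Phi:\mathbb S^1\cong S_O\mathbb S^2\to \overline M$ be given by $\Phi(\xi):=\gamma^B_{(O,\xi)}(\pi)$. By Definition~\ref{extended spherical geodesic flow}, $\Lambda_M^O$ is the image of $\Phi$, intersected with $M$. We then split $S_O\mathbb S^2$ into three sets: the \emph{free} directions $F$, whose geodesic never meets $\partial M$ on $[0,\pi]$; the \emph{reflected} directions $R$ of Definition~\ref{the classification of ray}; and the \emph{grazing} directions $G$, tangent to $\partial N$.

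For $\xi\in F$ we have $\Phi(\xi)=-O$. So free rays only contribute the excluded point, and $\Lambda_M^O\setminus\{-O\}=\Phi(R\cup G)\setminus\{-O\}$.

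By strict convexity, $G$ consists of exactly two directions $\xi_\pm$ (the two tangent great circles from $O$ to $N$). Moreover, $R$ is the open arc between them whose geodesics hit $N$ before time $\pi$. Grazing rays continue along great circles, so $\Phi(\xi_\pm)=-O$ as well. Hence $\Lambda_M^O\setminus\{-O\}=\Phi(R)\setminus\{-O\}$.

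On $R$ we will show $\Phi$ is smooth. Since $\partial N$ is smooth and the hit is transversal, the implicit function theorem gives that $t_B(\xi)$, $x_B(\xi)$ and $\xi_B(\xi)$ depend smoothly on $\xi\in R$. The reflection law \eqref{eqn:reflection vector} is a smooth map of $(x_B,\xi_B)$, and the geodesic flow is smooth. Together these make $\Phi|_R$ smooth.

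Next we show $\Phi|_R$ is injective and $\Phi(R)\subset M$. Injectivity is Proposition~\ref{one ray intersection} with $t_0=\pi$. For $\Phi(R)\subset M$: the reflected arc stays in the open hemisphere $S_{P_2}$ disjoint from $N$ (proof of Proposition~\ref{counting intersection}), so $\Phi(\xi)\in M$. We also need $\Phi(\xi)\neq -O$ for $\xi\in R$. This holds because $-O$ is reached from $O$ at time $\pi$ by every unbroken half great circle, and the argument of Case~2 of Proposition~\ref{one ray intersection} rules out a broken curve of length $\pi$ landing there. If that argument is too weak at length exactly $\pi$, we instead argue directly: reflection across $C_{x_B}$ sends the unreflected endpoint $-O$ to its mirror image, which differs from $-O$ unless $O\in C_{x_B}$, and the latter is excluded by transversality.

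The main obstacle is the immersion property, $d\Phi(\xi)\neq 0$ on $R$, since smooth injectivity alone does not give a smooth curve. We plan to prove it with the method of images. Near a fixed $\xi$, reflection in the plane of the tangent great circle is only first-order accurate, so we use the Jacobi field instead. Let $J(t)=\partial_\xi\gamma^B_{(O,\xi)}(t)$. Before reflection it equals $\sin t$ times the unit normal. At reflection it jumps according to the standard reflection formula for Jacobi fields (billiards). That formula adds a term proportional to the geodesic curvature $\kappa>0$ of $\partial N$ divided by $\cos\theta$, where $\theta$ is the incidence angle. This makes the reflected wavefront strictly more divergent than the unreflected one.

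Write $J(t)=(a\sin(t-t_B)+b\cos(t-t_B))\,e(t)$. The jump gives $b=\sin t_B>0$, with $a=\cos t_B+2\kappa\sin t_B/\cos\theta$ up to sign conventions. Then $J(\pi)=-a\sin t_B-b\cos t_B\ne0$ should reduce to
\[
-\Bigl(\sin(2t_B)+\tfrac{2\kappa\sin^2 t_B}{\cos\theta}\Bigr)\neq 0,
\]
and we expect this to hold by combining the sign of $\kappa$ with $t_B<\pi$. If this computation does not close at some $t_B\ge\pi/2$, the fallback is to show that $\Phi(R)$ is a graph over the angular variable around $-O$: the distance from $-O$ should be strictly monotone along $R$ by the non-crossing property. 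Finally, injectivity together with the immersion property on the open arc $R$ gives that $\Phi(R)=\Lambda_M^O\setminus\{-O\}$ is an embedded smooth curve.
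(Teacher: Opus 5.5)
You have most of the bookkeeping right. Free and grazing directions land at $-O$. On $R$ the map $\Phi$ is smooth, injective by Proposition~\ref{one ray intersection} with $t_0=\pi$, and takes values in $M$, and your mirror argument correctly gives $\Phi(\xi)\neq -O$.

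The gap is the step you yourself call the main obstacle: $d\Phi\neq0$ on $R$ is never established. Your post-reflection data $b=\sin t_B$ and $a=\cos t_B+2\kappa\sin t_B/\cos\theta$ are correct, but you used the wrong sign for $\sin(\pi-t_B)$. Correctly,
\[
J(\pi)=a\sin(\pi-t_B)+b\cos(\pi-t_B)=a\sin t_B-b\cos t_B=\frac{2\kappa\sin^2 t_B}{\cos\theta}.
\]
The expression $-(\sin 2t_B+2\kappa\sin^2 t_B/\cos\theta)$ that you wrote has no definite sign for $t_B>\pi/2$; it vanishes when $\cot t_B=-\kappa/\cos\theta$. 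So as written the argument fails exactly where you feared. The fallback cannot rescue it. Either $d_{\mathbb S^2}(\Phi(\xi),-O)$ tends to $0$ at both ends of $R$, or $R$ is a full circle, so it is not strictly monotone. And monotonicity would not yield $d\Phi\neq0$ anyway.

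With the sign corrected, the unreflected contribution cancels, because the free Jacobi field $\sin t$ vanishes at the conjugate point $t=\pi$. Then $J(\pi)\neq0$ follows from $\kappa>0$, $0<t_B<\pi$ and $\cos\theta>0$ alone.

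You can even bypass Jacobi fields. Since $\gamma_{(O,\xi)}(\pi)=-O$ for every $\xi$, the mirror formula $\Phi(\xi)=-O+2\langle O,\mathcal N(x_B(\xi))\rangle\mathcal N(x_B(\xi))$ is exact. So your remark that reflection in the tangent great circle is ``only first-order accurate'' is mistaken. Differentiate using $\nabla_v\mathcal N=\kappa v$ and $v=d_\xi x_B(w)\neq0$. The component of $d\Phi(w)$ along $v$ is $2\kappa\langle O,\mathcal N(x_B)\rangle|v|$, which is nonzero by strict convexity and transversality.

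This is the same mechanism as the paper's proof of Lemma~\ref{spherical geodesic non-degeneracy}. At $t=\pi$ the free term $\sin t\,\sin t_B$ in $\langle d_\eta G_t(w),v\rangle$ vanishes, and positivity comes from $-2\langle\gamma_{(O,\eta)}(\pi),\mathcal N(x_B)\rangle\langle\nabla_v\mathcal N,v\rangle>0$. The paper additionally proves a Gauss lemma and applies the inverse function theorem in $(t,\xi)$, which is more than this statement requires.

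Two minor fixes are also needed. First, if $-O\in\operatorname{Int}(N)$ there are no grazing directions and $R=S_OM$, so $G$ does not always have two elements. Second, an injective immersion of an open arc need not be an embedding. Add that $\Phi(\xi)\to-O\notin\Phi(R)$ at the ends of $R$, as in the proof of Proposition~\ref{jordan curve theorem}. This makes $\Phi\colon R\to\mathbb S^2\setminus\{-O\}$ proper, hence an embedding.
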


\begin{proof}
	If $\gamma_{(O,\eta)}^B$ is not a reflected spherical ray, then $\gamma_{(O,\eta)}^B(\pi)=-O$. Thus it suffices to consider reflected spherical rays.
	
	Let $\gamma_{(O,\eta)}^B$ be a reflected spherical ray. Define the exponential map
	\begin{equation*}\label{eqn : the exponential map}
		\exp_O^{B}:\mathbb{R}_+\times S_OM \to M,\qquad
		\exp_O^{B}(t,\xi):=\gamma^B_{(O,\xi)}(t).
	\end{equation*}
Then there holds that
\begin{lemma}\label{spherical geodesic non-degeneracy}
	For every $t\in \bigl(t_B(\eta),\pi\bigr]$, the differential
	$$
	d_{(s,\xi)}\exp_O^{B}\big|_{(s,\xi)=(t,\eta)}:
	T_{(t,\eta)}\bigl(\mathbb{R}_+\times S_OM\bigr)\to T_{\gamma^B_{(O,\eta)}(t)}M
	$$
	is an isomorphism.
\end{lemma} We defer the proof of Lemma~\ref{spherical geodesic non-degeneracy} to the appendix. Then by the inverse function theorem, there exist a neighbourhood $U$ of $(\pi,\eta)$ in $\mathbb{R}\times S_OM$ and a neighbourhood $V$ of $\exp_O^B(\pi,\eta)$ in $M$ such that
	$
	\exp_O^B:U\to V
	$
	is a diffeomorphism.
	
	In particular,   the map
	$
	\eta'\mapsto \exp_O^B(\pi,\eta')
	$
	provides a local parametrization of $\Lambda_M^O\setminus\{-O\}$ near $\exp_O^B(\pi,\eta)$. Therefore, $\Lambda_M^O\setminus\{-O\}$ is a smooth curve in $\mathbb{S}^2$.
\end{proof}

Next, we show that the spherical wavefront is in fact a simple closed curve. Consequently, the spherical wavefront separates the sphere into the two components $M^{\mathrm I}$ and $M^{\mathrm{II}}$ (see Figure~\ref{fig:jordan_wavefront}).

\begin{figure}
	\centering
	\includegraphics[width=0.8\textwidth]{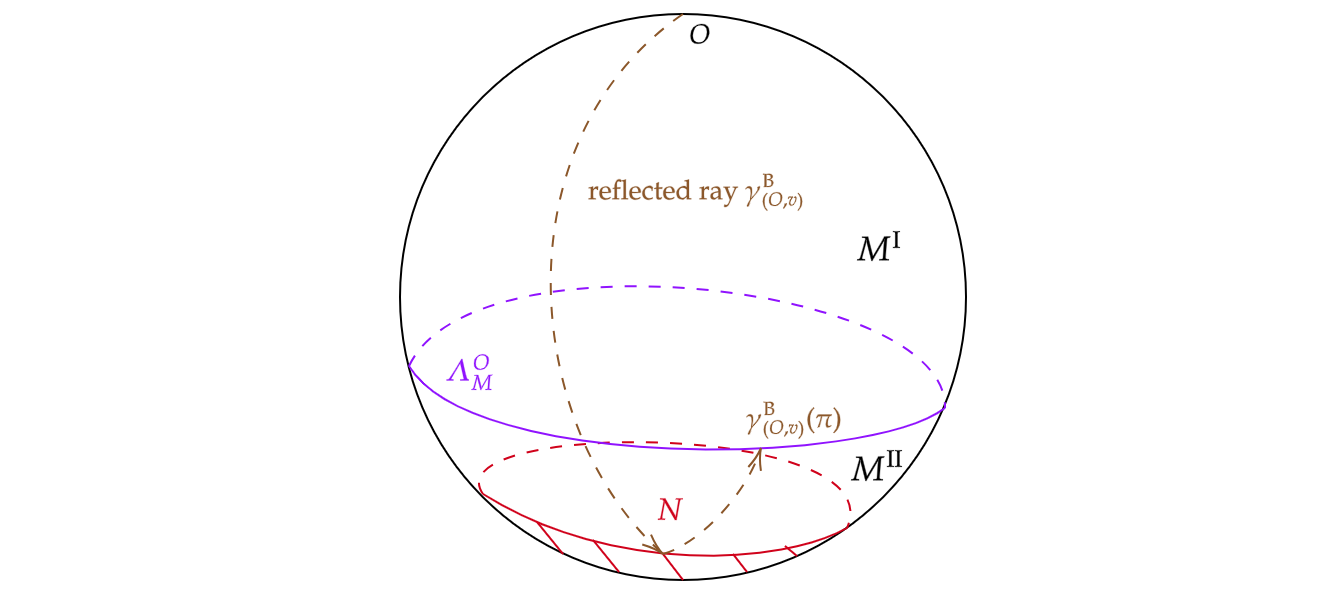}
	\caption{The spherical wavefront is a simple closed curve.}
	\label{fig:jordan_wavefront}
\end{figure}

\begin{proposition}\label{jordan curve theorem}
	The spherical wavefront	$\Lambda_M^O$ is a simple closed curve. Consequently, $\mathbb S^2\setminus \Lambda_M^O$ has exactly two connected components $M^{\mathrm I}$ and $M^{\mathrm{II}}$ such that
	$
	\operatorname{Int}(N)\subset M^{\mathrm{II}}.
	$
\end{proposition}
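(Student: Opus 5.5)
We will exhibit $\Lambda_M^O$ as the injective continuous image of a circle. Define $\Phi: S_OM\cong\mathbb S^1\to\mathbb S^2$ by $\Phi(\xi):=\gamma^B_{(O,\xi)}(\pi)=\exp_O^B(\pi,\xi)$, so that $\Lambda_M^O=\Phi(S_OM)$ by definition. Because $S_OM$ is compact, the Jordan curve theorem on $\mathbb S^2$ will give both claims once we prove two facts: $\Phi$ is continuous, and, after collapsing the arc of non-reflected directions to a point, $\Phi$ is injective.

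\textbf{Step 1: structure of $S_OM$.} The directions $\xi$ for which $\gamma_{(O,\xi)}$ meets $N$ within time $\pi$ form a closed arc $A\subset S_OM$. These are the great circles through $O$ that hit the convex set $N$ on the outgoing half. The interior of $A$ consists of reflected spherical rays. Its two endpoints $\xi_\pm$ are grazing directions, tangent to $\partial N$ by Lemma~\ref{great circle intersection}. The complementary arc $A^c$ consists of rays that never touch $\partial M$ before time $\pi$, and for these $\Phi(\xi)=-O$. The same holds at $\xi_\pm$, since the grazing ray continues along its great circle.

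\textbf{Step 2: continuity and injectivity.} On $\operatorname{int}A$, $\Phi$ is smooth, because $t_B(\xi)$, $x_B(\xi)$ and the reflected direction depend smoothly on $\xi$ by transversality of the hit and smoothness of $\partial N$. As $\xi\to\xi_\pm$ from inside $A$, the hit becomes grazing, the reflection angle tends to $0$, and the reflected geodesic converges to the unreflected great circle. Hence $\Phi(\xi)\to-O$, and $\Phi$ is continuous on all of $S_OM$. Injectivity on $\operatorname{int}A$ is exactly Proposition~\ref{one ray intersection} with $t_0=\pi$. It remains to rule out $\Phi(\xi)=-O$ for $\xi\in\operatorname{int}A$. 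If a reflected ray reached $-O$ at time $\pi$, its broken length $\pi$ would equal $d_{\mathbb S^2}(O,-O)$, so it would be a minimizing path with a corner. That contradicts the argument of Case 2 of Proposition~\ref{one ray intersection}. The corner argument is delicate here because minimizers between antipodal points are non-unique, yet all of them are smooth half great circles, so a cornered curve of length $\pi$ still cannot be one. Therefore $\Phi$ descends to a continuous injection from the circle $S_OM/A^c$, where $A^c$ is collapsed to a point, onto $\Lambda_M^O$. Since the domain is compact, this is a homeomorphism, and $\Lambda_M^O$ is a simple closed curve. We should also check that $\operatorname{int}A$ is nonempty, which holds because $N$ has nonempty interior and $O\notin N$. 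Degenerate cases such as $A^c=\emptyset$ are excluded because $N$ lies in an open hemisphere.

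\textbf{Step 3: locating $\operatorname{Int}(N)$.} The Jordan curve theorem on $\mathbb S^2$ gives exactly two components. We must show that $\operatorname{Int}(N)$ lies in one of them, which we call $M^{\mathrm{II}}$. Since $\Lambda_M^O\subset\overline M$ and $\operatorname{Int}N$ is connected and disjoint from $\Lambda_M^O$, it lies in a single component. Naming that component $M^{\mathrm{II}}$ finishes the proof.

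The step I expect to be the main obstacle is the continuity of $\Phi$ at the grazing directions, together with the exclusion of $-O$ from the images of reflected rays. Both require care with the degenerate antipodal geometry and with uniform control of the reflection as the incidence angle tends to $0$. I would handle the continuity through an explicit formula: the reflected point equals the mirror image across $C_{x_B}$ of $\gamma_{(O,\xi)}(\pi)=-O$, exactly as in Case 3 of Proposition~\ref{one ray intersection}. This yields $\Phi(\xi)=R_{C_{x_B(\xi)}}(-O)$, where $R_{C}$ denotes reflection across the great circle $C$. This expression is continuous in $\xi$, and it tends to $-O$ as $C_{x_B}$ approaches the great circle through $O$. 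The same formula gives the exclusion directly: $R_C(-O)=-O$ only when $-O\in C$, i.e. only for grazing rays.
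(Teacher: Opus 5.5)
Your strategy is the paper's. You realize $\Lambda_M^O$ as the image of $S_OM$ under $\Phi=\exp_O^B(\pi,\cdot)$, take injectivity on reflecting directions from Proposition~\ref{one ray intersection}, send all non-reflecting directions to $-O$, and apply the Jordan curve theorem. Your identity $\Phi(\xi)=R_{C_{x_B(\xi)}}(-O)$, which is the $t=\pi$ case of \eqref{Gt-reflection-formula}, usefully justifies two facts the paper only asserts: continuity at the grazing directions, and $-O\notin\Phi(\operatorname{int}A)$.

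There is one real gap, though it is easy to close. Your case reduction rests on a false claim: ``$A^c=\emptyset$ is excluded because $N$ lies in an open hemisphere.'' Lying in an open hemisphere does not prevent $-O\in N$. Take $N$ to be a small geodesic cap centred at $-O$. It is strictly convex, lies in an open hemisphere, and has $O\in M$. Every half great circle from $O$ ends at $-O\in\operatorname{Int}(N)$, so every direction reflects transversally before time $\pi$. Then $A=S_OM$, there are no grazing directions $\xi_\pm$, and there is no arc to collapse. Your Steps 1--2 therefore do not cover this configuration, which is exactly the paper's Case 1 ($-O\in\operatorname{Int}(N)$). The repair is short: here $\Phi$ itself is a continuous injection of the whole circle, hence an embedding. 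But the case must be treated, because it genuinely occurs.

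Two smaller points need fixing as well.

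\textbf{The case $-O\in\partial N$ (the paper's Case 2).} Here every half great circle from $O$ touches $N$ at time $\pi$, at $-O$. So your definition of $A$ (``meets $N$ within time $\pi$'') gives all of $S_OM$, and the claim that $\operatorname{int}A$ consists of reflected rays is false. Define $A$ instead as the closure of the set of directions that reflect transversally before time $\pi$. Then $A$ is the closed half of $S_OM$ pointing into the $N$-side of the tangent great circle $C_{-O}$. Its endpoints are the two directions along $C_{-O}$, and your continuity argument goes through because $C_{x_B(\xi)}\to C_{-O}\ni -O$.

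\textbf{The collapsed set.} Since $\Phi(\xi_\pm)=-O$ as well, $\Phi$ does not descend to an injection on $S_OM/A^c$ with the open arc $A^c$ collapsed, as written in Step 2. The points $\xi_+$, $\xi_-$ and the collapsed point all map to $-O$, and that quotient is not even Hausdorff. Collapse the closed arc $S_OM\setminus\operatorname{int}A$ of all non-reflecting directions instead, as your opening paragraph says. That quotient is a circle, and compactness then gives the embedding.
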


\begin{proof}
	Set the
	$\pi$-time exponential map
	\begin{equation}\label{eq:pi-time exponential map}
		\exp_O^{B}(\pi,\cdot):\mathbb R/2\pi\mathbb{Z}\cong S_OM\to M,\qquad \xi\mapsto\exp_O^{B}(\pi,\xi).
	\end{equation}
	We distinguish three cases based on the position of $-O$ relative to \(N\).
	
	\medskip
	\noindent\textbf{Case 1: $-O\in \operatorname{Int}(N)$.}
	Every half great circle from $O$ meets $\partial M$ transversely, and whence each extended spherical geodesic reflects exactly once before time $\pi$. Thus \eqref{eq:pi-time exponential map} is injective by Proposition~\ref{one ray intersection}. Since $S_OM\cong\mathbb S^1$ is compact, \eqref{eq:pi-time exponential map} is a topological embedding. Hence $\Lambda_M^O$ is a simple closed curve.
	
	\medskip
	\noindent\textbf{Case 2: $-O\in \partial N$.}
	Let $C_{-O}$ be the great circle tangent to $N$ at $-O$. By Lemma~\ref{great circle intersection}, $C_{-O}\cap N=\{-O\}$. Two unit tangent tangents of $\partial N$ at $-O$ divide $S_OM$ into two open arcs $I_0$ and $I_1$. Extended spherical geodesics with initial direction $\xi_0$ in $I_0$ do not reflect and satisfy $\exp_O^{B}(\pi,\xi_0)=-O$, whereas those in $I_1$ reflect exactly once. Hence $\exp_O^{B}(\pi,\cdot)|_{I_1}$ is injective by Proposition~\ref{one ray intersection}.
	
	If $\xi_\pm\in S_OM$ are the endpoints of $I_1$, continuity of the extended flow gives
	\[
	\lim_{\xi\to\xi_\pm,\,\xi\in I_1}\exp_O^{B}(\pi,\xi)=-O.
	\]
	Since $-O\notin \exp_O^{B}(\pi,\cdot)(I_1)$, the map $\exp_O^{B}(\pi,\cdot)|_{I_1}$ extends by identifying $\xi_-$ and $\xi_+$ to a continuous injective map
	\[
	\mathbb{R}/\left((\xi_+-\xi_-)\mathbb{Z}\right)\cong\mathbb S^1\longrightarrow\mathbb S^2
	\]
	with image $\Lambda_M^O$. This map is an embedding, and whence $\Lambda_M^O$ is a simple closed curve.
	
	\medskip
	\noindent\textbf{Case 3: $-O\notin N$.}
	It is easy to see that there exist exactly two directions $\xi_\pm\in S_OM$ such that the half great circle
	$$
	C_{\xi_\pm}^{\mathrm{half}}:=\gamma_{(O,\xi_{\pm})}([0,\pi])
	$$
	is tangent to $\partial N$. The two tangents $\xi_{\pm}$ partition $S_OM$ into two arcs $I_0$ and $I_1$. The extended spherical geodesic $\gamma^B_{(O,\xi)}$ for each $\xi\in I_0$ does not intersect $N$, whereas the extended spherical geodesic $\gamma^B_{(O,\xi)}$ for each $\xi\in I_1$ intersects $N$ only once. The same argument as in Case~2 shows that \(\Lambda_M^O\) is a simple closed curve.
	
	Thus $\Lambda_M^O$ is a simple closed curve in all cases. The Jordan curve theorem therefore gives exactly two connected components of $\mathbb S^2\setminus\Lambda_M^O$, denoted by $M^{\mathrm I}$ and $M^{\mathrm{II}}$, with $M^{\mathrm{II}}$ chosen on the $\operatorname{Int}(N)$-side.
\end{proof}

Indeed, the connected component $M^{\mathrm I}$, which lies away from $N$, corresponds to the reflection shadow region in $\Omega$.

\begin{proposition}\label{lemma:reflection_shadow_visible_characterization}
	Let $ S\subset \Omega=\mathbb R^3\setminus C_P(N)$ be an observation surface. Then $ S$ lies in the reflection shadow if and only if $\pi_{\mathbb S^2}^P( S)\subset M^{\mathrm I}.$
\end{proposition}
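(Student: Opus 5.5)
The plan is to reduce the statement in $\Omega$ to a statement about broken geodesics on $M$, using the product structure $g=dr^2+r^2g_{\mathbb S^2}$ and the length formula $d_\Omega=\sqrt{r_1^2+r_2^2-2r_1r_2\cos\theta}$. First I make precise what ``reflection shadow'' means. A point $\widetilde x\in\Omega$ lies in the reflection shadow if no ray from $\widetilde O$ reaches $\widetilde x$ after a specular reflection off $\partial C_P(N)\setminus\{P\}$. Next I use a standard fact about cones. A straight ray in $\Omega$ that avoids $P$ projects under $\pi^P_{\mathbb S^2}$ onto a spherical geodesic arc, of spherical length $<\pi$, traversed monotonically. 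Because the boundary is a ruled surface through $P$, its normal is tangent to the sphere factor. So Euclidean specular reflection on $\partial\Omega$ projects exactly to the spherical reflection law \eqref{eqn:reflection vector} at the projected point of $\partial M$. Hence a reflected Euclidean ray from $\widetilde O$ to $\widetilde x$ projects to an arc of a reflected spherical ray $\gamma^B_{(O,\xi)}$, run from time $0$ up to some $\theta<\pi$, that passes through its reflection point. Conversely, any such spherical arc of length $\theta<\pi$ lifts to a Euclidean reflected ray, for suitable radii. By Proposition~\ref{counting intersection} there is at most one reflection.

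Second, I identify the set swept by reflected rays before time $\pi$. Let
\[
R:=\bigl\{\gamma^B_{(O,\xi)}(t):\ \gamma^B_{(O,\xi)}\text{ a reflected spherical ray},\ t_B(\xi)<t<\pi\bigr\}.
\]
The radii are free, and for fixed angles $\theta<\pi$ every pair $(r(\widetilde O),r(\widetilde x))$ is realized by unfolding in the flat development of the cone. So $\widetilde x$ is reached by a reflected ray iff $x\in R$. Proposition~\ref{lemma:reflection_shadow_visible_characterization} therefore reduces to the identity
\[
R=M^{\mathrm{II}}\cap M,\qquad\text{equivalently}\qquad M^{\mathrm I}=M\setminus\bigl(R\cup\Lambda_M^O\bigr).
\]
Then $S$ lies in the shadow iff its projection avoids $R$. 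I also need that the projection cannot meet $\Lambda_M^O$ itself. Points of $\Lambda_M^O$ are reached exactly at angle $\pi$, which is the boundary of regions II/III, where the reflected front is grazing/forward. Treating it as non-shadow or excluding it by openness of $S$ is a matter of convention, and I would settle it using the definition of the shadow as an open set.

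Third, I prove the identity. The two halves use the following tools:
\begin{itemize}
\item $R$ is open. This follows from Lemma~\ref{spherical geodesic non-degeneracy} and the inverse function theorem, since it shows $\exp^B_O$ is a local diffeomorphism on $\{t_B(\xi)<t\le\pi\}$.
\item $R$ is connected. It is the image of a connected parameter set: an interval $I_1$ of directions times intervals of times.
\item $R\cap\Lambda_M^O=\emptyset$. If $\gamma^B_{(O,\xi_1)}(t)=\gamma^B_{(O,\xi_2)}(\pi)$ with $t<\pi$, I would argue as in Proposition~\ref{one ray intersection}, Case 3, via the folding across $C_{x_1}$ and length minimization, or by monotonicity of the front, to get a contradiction. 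So $R$ lies in a single component of $\mathbb S^2\setminus\Lambda_M^O$.
\item $R$ lies in $M^{\mathrm{II}}$. Points just after reflection, near $x_B\in\partial N$, lie in $R$ and are adjacent to $\operatorname{Int}N\subset M^{\mathrm{II}}$ without crossing $\Lambda_M^O$.
\end{itemize}
For the reverse inclusion $M^{\mathrm{II}}\cap M\subset R$, I show that $\partial R\cap M\subset\Lambda_M^O\cup\partial N$. Consider a limit of points $\gamma^B_{(O,\xi_n)}(t_n)$. If $t_n\to\pi$ the limit lies on $\Lambda_M^O$. If $t_n-t_B(\xi_n)\to0$ the limit lies on $\partial N$. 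If $\xi_n$ tends to a tangent direction $\xi_\pm$, the limit lies on the grazing half great circle. In that last case I must check that this circle is not interior to $M^{\mathrm{II}}$, i.e. that it is traced by points at time $<\pi$ only from the $M^{\mathrm I}$ side. Given this, $R$ is open and closed in $(M^{\mathrm{II}}\cap M)\setminus\partial N$, and is therefore all of it.

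I expect the main obstacle to be this last boundary analysis, especially for the grazing directions $\xi_\pm$ in Case 3 of Proposition~\ref{jordan curve theorem}. There the shadow boundary, the grazing half circle, and the wavefront meet at the endpoint of $\Lambda_M^O$ near $-O$ or at the tangency point. Proving that the grazing arc separates $R$ from the shadow, rather than lying inside $M^{\mathrm{II}}$, needs the same care as Cases 2 and 3 of Proposition~\ref{jordan curve theorem}.
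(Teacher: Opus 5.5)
Your plan breaks down at the identity $R=M^{\mathrm{II}}\cap M$, which is false whenever $-O\notin N$ (Case~3 of Proposition~\ref{jordan curve theorem}).

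\textbf{Why the identity fails.} In Case~3 the half great circles $\gamma_{(O,\xi)}$, $\xi\in I_1$, enter and leave $N$ before reaching $-O$. So there is a nonempty open region $B$ behind $N$, the incident shadow. It is bounded by the back part of $\partial N$ and by the two grazing arcs running from the tangency points $z_\pm$ to $-O$.

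No reflected ray reaches $B$ or these arcs by time $\pi$. To see this, suppose $\gamma^B_{(O,\xi)}$ reflects at $x_B$, and let $S_{P_1},S_{P_2}$ be the hemispheres cut by $C_{x_B}$, with $N\subset\overline{S}_{P_1}$. Then $O\in S_{P_2}$ and $z_\pm\in N\setminus\{x_B\}\subset S_{P_1}$. Hence every half great circle from $O$ crosses $C_{x_B}$ no later than its first contact with $N$, and then stays in $S_{P_1}$ until $-O$. The reflected arc, by contrast, stays in $S_{P_2}$ for $t\in(t_B,t_B+\pi)\supset(t_B,\pi]$.

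Yet $B$ and the grazing arcs lie in $M^{\mathrm{II}}$. They miss $\Lambda_M^O$, by the same hemisphere argument and because $-O$ is excluded. Moreover, a small disc around a point of the back part of $\partial N$, or around $z_\pm$, avoids the compact set $\Lambda_M^O\subset M$ and meets $\operatorname{Int}(N)$.

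Hence $R\subsetneq M^{\mathrm{II}}\cap M$. Your open--closed argument fails exactly where you feared: the grazing arcs are interior to $M^{\mathrm{II}}$, with $R$ on one side and $B$ on the other. When $-O\in N$ the region $B$ is empty and this problem does not arise.

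\textbf{The root cause is your definition of the shadow.} You define the reflection shadow as the set of points reached by no once-reflected ray. With that definition, a patch $S$ whose projection lies in $B$ is in the shadow although $\pi^P_{\mathbb S^2}(S)\not\subset M^{\mathrm I}$. So the equivalence you set out to prove is false under your definition.

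The paper does not derive the shadow from ray tracing. It takes from Babich--Dement'ev--Samokish--Smyshlyaev and Smyshlyaev the characterization of the reflection shadow as $\{t(x,O)>\pi\}$. Here $t(x,O)=\inf_{z\in\partial M}\bigl(d_{\mathbb S^2}(O,z)+d_{\mathbb S^2}(z,x)\bigr)$ uses $\mathbb S^2$-distances that ignore $N$. For $x\in B$ or on a grazing arc this gives $t(x,O)=d_{\mathbb S^2}(O,x)<\pi$, so those points are correctly excluded from the shadow. The paper then shows $\Lambda_M^O=\{t(\cdot,O)=\pi\}$ and reads off $M^{\mathrm I}=\{t(\cdot,O)>\pi\}$ from the Jordan curve structure.

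To salvage your approach, adopt this characterization and work with $\{x\in M:t(x,O)<\pi\}$. That set contains $B$ and the grazing arcs in addition to $R$.

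\textbf{A smaller gap.} Proposition~\ref{one ray intersection} compares rays only at equal times, so it does not give $R\cap\Lambda_M^O=\emptyset$. What you need is that a reflected ray of length $L\le\pi$ ending at $x$ realizes $t(x,O)$. This follows by letting $\sigma$ be the reflection across the plane of $C_{x_B}$:
\begin{itemize}
\item for $z\in\overline{S}_{P_1}$ one has $d_{\mathbb S^2}(z,x)\ge d_{\mathbb S^2}(z,\sigma x)$;
\item hence $d_{\mathbb S^2}(O,z)+d_{\mathbb S^2}(z,x)\ge d_{\mathbb S^2}(O,\sigma x)$;
\item the unfolded path is a great-circle arc of length $L\le\pi$ from $O$ to $\sigma x$, so $d_{\mathbb S^2}(O,\sigma x)=L$.
\end{itemize}
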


\begin{proof}
	\cite[Section 2.3]{BabichDementevSamokishSmyshlyaev2000} and \cite{Smyshlyaev90} characterize the reflection shadow as:
	\begin{equation}\label{eqn : reflection shadow}
		\left\{\widetilde x\in\Omega:t(x,O)>\pi,\, x=\pi_{\mathbb S^2}^P(\widetilde x)\right\}
	\end{equation}
	where
	\begin{equation}\label{the t travel-time function}
		t(x,O):=\inf_{z\in\partial M}\bigl(d_{\mathbb S^2}(O,z)+d_{\mathbb S^2}(z,x)\bigr),\qquad x\in M,
	\end{equation}
	and $d_{\mathbb S^2}(\cdot,\cdot)$ denotes the spherical distance on $\mathbb S^2$. We claim that
	\begin{equation}\label{eqn: t}
		\Lambda_M^O = \{x\in M:t(x,O)=\pi\}.
	\end{equation}

	If $t(x,O)=\pi$ and $x\neq-O$, then we assume,
	\[
	t(x,O)=d_{\mathbb S^2}(O,z)+d_{\mathbb S^2}(z,x),\qquad z\in\partial M.
	\]
	The broken spherical geodesic segment from $O$ via $z$ to $x$ must be a reflected spherical ray defined in Definition~\ref{extended spherical geodesic flow}. If $x=-O\notin N$, we have $-O\in \Lambda_M^O$. Hence,
	\[
	\Lambda_M^O \supset \{x\in M:t(x,O)=\pi\}.
	\]
	
	Conversely, if $x\in\Lambda_M^{O}\setminus\{-O\}$, then there exists a reflected spherical ray $\gamma_{(O,\eta)}^B$ such that $x=\gamma_{(O,\eta)}^B(\pi)$. The length of the segment $\gamma_{(O,\eta)}^B([0,\pi])$ is equal to $t(x,O)$. If $x=-O\notin N$, $t(-O,O)=\pi$. Thus
	\[
	\Lambda_M^O \subset \{x\in M:t(x,O)=\pi\}.
	\]
	
	By \eqref{eqn: t} and Proposition~\ref{jordan curve theorem}, we obtain
	\begin{equation}\label{eq:M_I}
		M^{\mathrm I}=\{x\in M: t(x,O)>\pi\}.
	\end{equation}
	In view of \eqref{eqn : reflection shadow},  $ S$ lies in the reflection shadow if and only if $\pi_{\mathbb S^2}^P( S)\subset M^{\mathrm I}$.
\end{proof}

\section{Determination of spherical wavefronts}
\label{sec:determination_of_spherical_wavefronts}

Now we resume the inversion process, and turn to the reconstruction of  spherical wavefronts from the local knowledge of spherical half waves retrieved in Proposition \ref{prop:diffraction_to_half_wave}. The main result is 
 \begin{proposition} 
 	\label{prop:half_wave_to_wavefront}
 	Let
 	$
 	M_{(j)}=\mathbb S^2\setminus N_{(j)},
 	$ for $
 	j=1,2
 	$ be as  in Proposition \ref{prop:diffraction_to_half_wave}.
 	If \eqref{eqn : sine half wave on S} holds on a nonempty open subset of $M_{(1)}^{\mathrm I}\cap M_{(2)}^{\mathrm I}$,
 	then $
 		\Lambda_{M_{(1)}}^O=\Lambda_{M_{(2)}}^O.
 $
 \end{proposition}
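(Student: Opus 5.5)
The plan is to prove that $\Lambda_M^O$ is the boundary of the maximal domain of real-analytic continuation of $u_j:=\sin(\pi\sqrt{\Delta_{M_{(j)}}+\tfrac14})\delta_O$ from $M^{\mathrm I}_{(j)}$. Two ingredients are needed, both previewed in Section~\ref{sec : methodlogy}.

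\textbf{Step A (analyticity).} We first show that $u_j$ is real-analytic on $\{\pi\}\times M^{\mathrm I}_{(j)}$, which is Corollary~\ref{WF(u_half)}. To prove this, we use that $M^{\mathrm I}_{(j)}=\{t(x,O)>\pi\}$ by \eqref{eq:M_I}. By finite propagation speed for the wave group $\cos(t\sqrt{\Delta_M+\frac14})$ and $\sin(t\sqrt{\cdot})/\sqrt{\cdot}$ with Dirichlet condition, near such $x$ the solution at time $\pi$ agrees with the free spherical solution on $\mathbb S^2$, which is analytic away from $-O$. The subtlety is that the half-wave operator $\sin(t\sqrt{P})$ is not a local wave propagator. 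We handle this by writing
\[
\sin\!\Big(t\sqrt{P}\Big)=\sqrt{P}\,\frac{\sin(t\sqrt{P})}{\sqrt{P}},
\]
where $\sqrt{P}$ is pseudodifferential (analytic interior microlocally), together with an analytic wavefront set estimate for the boundary problem (the factorial tail estimate alluded to in the appendix). This gives $\mathrm{WF}_A(u_j(\pi,\cdot))\cap T^*M^{\mathrm I}_{(j)}=\emptyset$. By unique continuation of real-analytic functions on the connected set $M^{\mathrm I}_{(1)}\cap M^{\mathrm I}_{(2)}$-component containing the given open set, $u_1=u_2$ on that whole component.

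\textbf{Step B (blow-up at the front).} Near each point of $\Lambda^O_M\setminus\{-O\}$, which is a smooth curve by Proposition~\ref{stratification} and Lemma~\ref{spherical geodesic non-degeneracy}, we build a Hadamard parametrix (Proposition~\ref{half wave asymptotic prop}) along the reflected rays. This gives a one-sided expansion
\[
u_j(\pi,x)\sim c(x)\,\rho(x)^{-\alpha}+\dots,\qquad \rho=\mathrm{dist}(x,\Lambda^O_M),\ x\in M^{\mathrm I},
\]
with $c\neq0$ and $\alpha>0$, since the $\sin$ half wave of a delta is homogeneous of order $-3/2$ in $(t-\text{travel time})$. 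Hence $u_j$ cannot extend analytically, or even continuously, across any point of $\Lambda^O_M\setminus\{-O\}$.

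\textbf{Step C (conclusion).} We suppose $\Lambda^O_{M_{(1)}}\neq\Lambda^O_{M_{(2)}}$. Both are simple closed curves bounding $M^{\mathrm I}_{(j)}$ (Proposition~\ref{jordan curve theorem}), and both pass through the closure of the region near $O$'s antipode. So there is a point $x_0\in\partial M^{\mathrm I}_{(1)}\setminus\{-O\}$ lying in $M^{\mathrm I}_{(2)}$, on the boundary of the common component $W$ of $M^{\mathrm I}_{(1)}\cap M^{\mathrm I}_{(2)}$. If this fails, we swap the roles of $1$ and $2$ and argue by connectedness of the Jordan domains, possibly after checking that the data open set lies in a common component. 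Near $x_0$, $u_2$ is analytic by Step A and equals $u_1$ on $W$ by Step A, while $u_1$ blows up as $x\to x_0$ from $W$ by Step B. This is a contradiction, so $M^{\mathrm I}_{(1)}=M^{\mathrm I}_{(2)}$ and the fronts coincide.

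The main obstacle is Step A. Proving analytic regularity of the non-local half-wave $\sin(\pi\sqrt{\Delta_M+\frac14})\delta_O$ in the shadow requires analytic microlocal control near the boundary, or the tail estimate. Step B's nondegeneracy of the leading coefficient is the secondary difficulty.
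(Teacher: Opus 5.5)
Your proposal is correct and follows the paper's argument. It combines real-analyticity of $\sin(\pi\sqrt{\Delta_{M_{(j)}}+\tfrac14})\delta_O$ on $M^{\mathrm I}_{(j)}$ (Corollary~\ref{WF(u_half)}) and the identity theorem on a common component, the one-sided $\Gamma^{-3/2}$ blow-up from the $M^{\mathrm I}$ side at $\Lambda_M^O\setminus\{-O\}$ (Proposition~\ref{half wave asymptotic prop}), and a contradiction at a boundary point of that component lying on one front inside the other shadow region. Your alternative sketch of Step A (factoring out $\sqrt{P}$) is not needed, since the paper already supplies that Corollary through the regularized family $U_n^M$, and the remark about the antipode in Step C is superfluous: all you need is that if the component $W$ is a proper subset of the Jordan domain $M^{\mathrm I}_{(j)}$, then $\partial W$ contains a point of $M^{\mathrm I}_{(j)}$, which must lie on the other front.
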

We split the proof into three sequential steps: first, we establish analyticity of the spherical sine half wave  $u_{\mathrm{half}}^{\sin}(\pi,\cdot)$ in the region where the spherical wavefront $\Lambda_{M}^O$ has not reached yet, provided
 \begin{equation}\label{spherical sine half wave propagator}
 	u_{\mathrm{half}}^{\sin}(t,x)
 	:=
 	\sin\Bigl(t\sqrt{\Delta_M+\tfrac14}\Bigr)\delta_O(x)\in\mathcal{D}^{'}(\mathbb{R}\times M);
 \end{equation}
 second, we obtain the asymptotic expansion of $u_{\mathrm{half}}^{\sin}(\pi,\cdot)$ near $\Lambda_{M}^O$; finally, the asymptotic expansion determines the maximal real-analytic continuation domain of 
 \(u_{\mathrm{half}}^{\sin}(\pi,\cdot)\), whose boundary is \(\Lambda_M^O\).

%This section collects the material on spherical geometry and diffraction coefficients needed in this paper.
%In subsection~\ref{sec:proof_of_proposition_D}, we introduce extended spherical geodesics and the $\pi$-time spherical wavefront, and explain how the latter determines the visible part of the conical boundary.
%In subsection~\ref{sec:reduction_to_spherical_problem}, we recall the Cheeger--Taylor functional calculus on the cone $\Omega=C_P(M)$ and identify the principal diffraction coefficient with the $-\pi$-time spherical half wave propagator on the exterior base $M$.

%\subsection{Reconstruction of the visible part from the spherical wavefront}
%\label{sec:proof_of_proposition_D}

\subsection{Analytic regularity of spherical sine half waves}\label{The_analytic_singular_support_of_wave_equation}

In this subsection we study the analytic singular support of  $u_{\mathrm{half}}^{\sin}$ and show, in particular, that it is real-analytic near $\{\pi\}\times M^{\mathrm I}$. The analytic singular support of $u\in\mathcal{D}'(X)$ on a real-analytic manifold $X$ is defined by
$$
\mathrm{sing\,supp}_a\,u
:=
X\setminus
\{p\in X:\ u \text{ is real-analytic in a neighbourhood of } p\}.
$$
Throughout this subsection, $X=\mathbb{R}\times M$.

For $\bullet\in\{M,\mathbb{S}^2\}$, we first consider the regularized family
\begin{equation}\label{eq:the regularized family of S}
	U_n^{\bullet}(t,x):=
	\cos\Bigl(t\sqrt{\Delta_\bullet+\tfrac14}\Bigr)
	(\Delta_\bullet+\tfrac14)^{-n-\frac12}\delta_O(x)\in\mathcal{D}^{'}(\mathbb{R}\times \bullet), 
\end{equation}  which solve 	\begin{equation}\label{eq:solves_the_wave_equation}
\Bigl(\partial_{tt}+\Delta_{\bullet}+\tfrac14\Bigr)U_n^{\bullet}(t,x)=0.
\end{equation}
In particular, $u_{\mathrm{half}}^{\sin}$ is recovered by
$$
u_{\mathrm{half}}^{\sin}(t,x)=(-1)^{1+n}(\partial_t)^{2n+1}U_n^{M}(t,x).
$$

\begin{proposition}\label{WF(U)}
	If \(x\in M\) and \(d_{\mathbb S^2}(x,O)<s<t(x,O)\), with \(t(\cdot,O)\) as in \eqref{the t travel-time function}, namely, after the incident wavefront of $U_n^{M}$ has passed through $x$ and before the reflected wavefront of $U_n^{M}$ reaches $x$, then 
	\begin{equation*}\label{eqn:WF(U)}
		(s,x)\notin \mathrm{sing\,supp}_a\,U_n^{M}.
	\end{equation*}
\end{proposition}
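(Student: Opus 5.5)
Our plan is to compare $U_n^M$ with the free-sphere solution $U_n^{\mathbb S^2}$ from \eqref{eq:the regularized family of S}, and to show that the difference is analytic near $(s,x)$, while $U_n^{\mathbb S^2}$ is analytic there for explicit reasons. Put $W:=U_n^M-U_n^{\mathbb S^2}|_{\mathbb R\times M}$.

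\textbf{Step 1: the free solution.} On $\mathbb S^2$, $\Delta_{\mathbb S^2}+\tfrac14$ has eigenvalues $(l+\tfrac12)^2$. The half wave group $\exp(\pm\imath t\sqrt{\Delta_{\mathbb S^2}+\tfrac14})$ is therefore $2\pi$-periodic up to sign, and its kernel is an explicit zonal function of $\cos d_{\mathbb S^2}(x,O)$. From the Legendre generating function, $\cos(t\sqrt{\Delta+\tfrac14})\delta_O$ is a multiple of $\partial_t$ of $H(\cos d-\cos t)\,(\cos d-\cos t)^{-1/2}$. Its singularities lie only on $\{d_{\mathbb S^2}(x,O)=t\}$ and the antipodal analogues $\{d=2\pi k\pm t\}$. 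Applying $(\Delta+\tfrac14)^{-n-1/2}$ keeps the kernel real-analytic off this set, since it commutes with the flow and is an analytic pseudodifferential operator. For $d(x,O)<s<t(x,O)\le\pi$, the point $(s,x)$ has $d<s<2\pi-d$, so $U_n^{\mathbb S^2}$ is analytic near $(s,x)$ in $\mathbb R\times M$.

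\textbf{Step 2: the correction $W$.} By \eqref{eq:solves_the_wave_equation}, $W$ solves $(\partial_{tt}+\Delta_M+\tfrac14)W=0$ in $\mathbb R\times M$ with zero Cauchy data at $t=0$, since the initial data of both solutions coincide on $M$. Its Dirichlet data on $\mathbb R\times\partial M$ equal $-U_n^{\mathbb S^2}|_{\partial M}$. By finite propagation speed, $W$ vanishes for $t<d_{\mathbb S^2}(O,\partial M)$. We then propagate analytic singularities. $\partial M$ and the metric are real analytic once $N$ is treated; we only need analyticity of the sphere metric away from the boundary, plus the standard interior/boundary results. By Sj\"ostrand's (and Lebeau's) analytic propagation theorem for the Dirichlet problem, $WF_a(W)$ is contained in the union of generalized broken bicharacteristics issuing from $WF_a$ of the boundary source. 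That boundary source is singular only at times $t=d(O,z)$, $z\in\partial M$, modulo $2\pi$-shifts that are irrelevant for $t\le\pi$. Hence any analytic singularity of $W$ over $(s,x)$ must lie on a broken ray from $O$ to $x$ through some $z\in\partial M$ with total length $s$, forcing $s\ge t(x,O)$. This contradicts $s<t(x,O)$, so $(s,x)\notin\mathrm{sing\,supp}_a W$. Rays reaching $x$ without touching $\partial M$ carry only the free singularity, already in $U_n^{\mathbb S^2}$, which is why we subtract it.

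\textbf{Main obstacle.} The hard step is Step 2. The analytic propagation theorem requires an analytic boundary, but $\partial N$ is only assumed $C^\infty$, and glancing rays tangent to $\partial N$ must be handled as well. I would first localize: the conclusion only concerns a neighbourhood of $(s,x)$ with $x$ in the interior, so I would use a time-reversed/domain-of-dependence argument. Fix a small analytic test region around $(s,x)$ and use Holmgren/John uniqueness with analytic hyperbolic continuation. This reduces the claim to showing that near the backward cone from $(s,x)$, of radius $s<t(x,O)$, $U_n^M$ coincides with $U_n^{\mathbb S^2}$. The backward light cone $\{(t',y):d(y,x)\le s-t'\}$ meets $\partial M$ only where $t'\ge d(O,\cdot)$ fails to meet the forward cone from $O$, because $s<t(x,O)$. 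The two solutions then agree on that region by uniqueness for the mixed problem and finite speed. This makes $U_n^M=U_n^{\mathbb S^2}$ near $(s,x)$, which is analytic by Step 1, and avoids analytic microlocal boundary theory altogether. If needed, I would add a strict inequality margin, enlarging slightly to an open neighbourhood, which the strict inequality $s<t(x,O)$ together with continuity of $t(\cdot,O)$ and $d$ allows.
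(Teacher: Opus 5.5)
\textbf{There is a genuine gap.} Your Step 2 and your fallback both rest on a false claim: that $U_n^M$ and $U_n^{\mathbb S^2}$ have the same Cauchy data on $M$. You use it to conclude that $W=U_n^M-U_n^{\mathbb S^2}$ vanishes for $t<d_{\mathbb S^2}(O,\partial M)$, and then, by domain of dependence, that $U_n^M=U_n^{\mathbb S^2}$ near $(s,x)$.

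The claim fails because $U_n^\bullet(0,\cdot)=(\Delta_\bullet+\tfrac14)^{-n-\frac12}\delta_O$, and this half-integer power is a nonlocal operator that sees the boundary. Write it as $\Gamma(n+\tfrac12)^{-1}\int_0^\infty \tau^{n-\frac12}e^{-\tau/4}e^{-\tau\Delta_\bullet}\,d\tau$. Strict domain monotonicity of the Dirichlet heat kernel then gives $U_n^M(0,y)<U_n^{\mathbb S^2}(0,y)$ at every interior point $y\neq O$ of $M$. Likewise, the Dirichlet data $-U_n^{\mathbb S^2}|_{\mathbb R\times\partial M}$ is already nonzero at $t=0$, since $U_n^{\mathbb S^2}(t,\cdot)$ is not concentrated near $\{d_{\mathbb S^2}(\cdot,O)=t\}$. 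So $U_n^M-U_n^{\mathbb S^2}$ is a nonzero function near $(s,x)$. What is true, and what the paper proves, is that this difference is real-analytic there; no finite-speed, Holmgren or John argument can give more.

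Finite propagation speed is available only for $\cos(t\sqrt{\Delta_\bullet+\tfrac14})\delta_O$. Passing from that to $U_n^\bullet$ means applying the nonlocal operator $(\Delta_\bullet+\tfrac14)^{-n-\frac12}$. The actual content of the proposition is that this creates no interior analytic singularities even though $\partial M$ is only $C^\infty$, which is exactly where, as you note, analytic boundary propagation theorems are unavailable. Your proposal contains no mechanism for this step.

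\textbf{How the paper supplies it.} It writes $(\Delta_\bullet+\tfrac14)^{-n-\frac12}=c_n\int_{\mathbb R}|t|^nK_n(\tfrac{|t|}{2})\cos(t\sqrt{\Delta_\bullet})\,dt$ and splits the $t$-integral with H\"ormander's cutoffs $\chi_\varepsilon^l$, which satisfy $\|D^\alpha\chi_\varepsilon^l\|_\infty\le(Cl/\varepsilon)^{|\alpha|}$ for $|\alpha|\le l$.

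The part with $|t|\le\varepsilon$ involves only $\cos(t\sqrt{\Delta_\bullet})\cos(s\sqrt{\Delta_\bullet+\tfrac14})\delta_O$ for small $|t|$. Here your finite-speed reasoning is correct, and the $M$ and $\mathbb S^2$ versions coincide on $(s_0-\varepsilon,s_0+\varepsilon)\times U_x$.

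The tail has multiplier $f_\varepsilon^{2q}(\tau)=O\bigl((Lq/\tau)^{2q}\bigr)$, by analyticity and exponential decay of $K_n$ away from $0$. Through the eigenfunction expansion and Weyl's law this gives $\|\Delta^q U_{n,\varepsilon}^{\bullet,\mathrm{tail},2q+6}\|_{L^2}\le(2q)!\,L_0^{q+1}$. The Kotake--Narasimhan theorem then yields analyticity of the difference in the interior, with no analyticity of $\partial M$ needed.

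\textbf{Step 1.} This is fine in substance; the paper gets it instead from the explicit formula for $U_0^{\mathbb S^2}$ plus analytic elliptic regularity for $(\tfrac12\Delta_{\mathbb R\times\mathbb S^2}+\tfrac18)^n$. However, your inequality $t(x,O)\le\pi$ is false: on $M^{\mathrm I}$, which is the case used later, $t(x,O)>\pi$. What you need is $t(x,O)\le 2\pi-d_{\mathbb S^2}(x,O)$, which follows from $d_{\mathbb S^2}(z,x)\le d_{\mathbb S^2}(z,-O)+d_{\mathbb S^2}(-O,x)$.
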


\begin{proof}
	Fix $x\in M$ and $s_0$ such that $$d_{\mathbb{S}^2}(x,O)<s_0<t(x,O).$$
	Choose $\delta>0$ and a neighbourhood $U_x\ni x$ such that
	$$
	s_0+2\delta<t(U_x,O):=\inf_{y\in U_x}t(y,O).
	$$
	Hence, for $s\in (s_0-\varepsilon,s_0+\varepsilon)$ and $|t|<\varepsilon<\delta$, we have
	\begin{equation}\label{eq:finite propagation distance}
		s+|t|< t(y,O),
		\qquad y\in \widetilde U_x,
	\end{equation}
	where $\widetilde U_x\Subset M$ is an open neighbourhood of $\overline{U}_x$.

	The explicit formula in \cite[p.~115, Eq.~(4.15)]{T_PDE_2} shows that \(U^{\mathbb S^2}_0\) is real-analytic near \((s_0,x)\). In light of \eqref{eq:the regularized family of S} and \eqref{eq:solves_the_wave_equation}, we derive that
	\begin{eqnarray*}
		\Bigl(\tfrac12\Delta_{\mathbb{R}\times \mathbb{S}^2}+\tfrac18\Bigr)^nU_n^{\mathbb{S}^2}
		&=&
		\Bigl(-\tfrac12\partial_{tt}+\tfrac12\Delta_{\mathbb{S}^2}+\tfrac18\Bigr)^nU_n^{\mathbb{S}^2}\\
		&=&
		(\Delta_{\mathbb{S}^2}+\tfrac14)^nU_n^{\mathbb{S}^2}\\
		&=&
		(\Delta_{\mathbb{S}^2}+\tfrac14)^{-\frac12}\cos\Bigl(t\sqrt{\Delta_{\mathbb{S}^2}+\tfrac14}\Bigr)\delta_O
		=
		U_0^{\mathbb{S}^2}.
	\end{eqnarray*}
	The operator $\bigl(\tfrac12\Delta_{\mathbb{R}\times \mathbb{S}^2}+\tfrac18\bigr)^n$ is elliptic and has real-analytic coefficients. Hence, by analytic elliptic regularity \cite[Theorem 9.5.1.]{Ho_PDO_1}, $U_n^{\mathbb{S}^2}$ is real-analytic near $(s_0,x)$.

	It remains to show that \(U_n^M-U_n^{\mathbb S^2}\) is real-analytic near \((s_0,x)\). Together with the analyticity of  \(U_n^{\mathbb S^2}\), this will imply that \(U_n^M\) is real-analytic near \((s_0,x)\).

	We first decompose $U_n^{\bullet}$ for $\bullet\in\{M,\mathbb{S}^2\}$ into a low frequency term $U_{n,\varepsilon}^{\bullet,\mathrm{loc},l}$ and a high frequency term $U_{n,\varepsilon}^{\bullet,\mathrm{tail},l}$ such that $U_{n,\varepsilon}^{M,\mathrm{loc},l}=U_{n,\varepsilon}^{\mathbb{S}^2,\mathrm{loc},l}$. To achieve this, we invoke the following analytic tools. One is a family of cutoff functions, constructed in \cite[Theorem~1.4.2]{Ho_PDO_1}, 
	\begin{equation}\label{eq:a family of cutoff function}
		\left\{\chi_\varepsilon^l\right\}_{\varepsilon\in(0,1),\,l\in\mathbb{Z}_+}\subset C_c^\infty(\mathbb{R})
	\end{equation}
	such that:
	\begin{itemize}
		\item $\chi_\varepsilon^l(t)\equiv 1$ for $t\in[-\varepsilon/2,\varepsilon/2];$
		\item $\supp(\chi_\varepsilon^l)\subset[-\varepsilon,\varepsilon];$
		\item there exists $C\ge 1$ such that for every $l\in\mathbb{Z}_+$,
		\begin{equation}\label{cutoff}
			\|D^\alpha\chi_\varepsilon^l\|_{L^\infty(\mathbb{R})}\le \Bigl(\frac{Cl}{\varepsilon}\Bigr)^{|\alpha|},
			\qquad |\alpha|\le l .
		\end{equation}
	\end{itemize}
	The other is the function $g_n(\tau):=(\tau^2+\tfrac14)^{-n-\frac12}$ satisfying
	\begin{equation*}\label{fourier transform}
		\widehat g_n(t)=\int_{-\infty}^{\infty}g_n(\tau)e^{i\tau t}\,d\tau
		=\frac{2\cdot 4^n n!}{(2n)!}\,|t|^n\,K_n\Bigl(\frac{|t|}{2}\Bigr),
	\end{equation*}
	where $K_n$ denotes the modified Bessel function of the second kind of order $n$. 
	
	Using the functional calculus for \((\Delta_\bullet+\tfrac14)^{-n-\frac12}\), as in \cite[p.~252, Eq.~(5.51)]{T_PDE_1}, we decompose
	\begin{equation}\label{eq:split_microlocal}
		U_n^{\bullet}(s,\cdot)=U_{n,\varepsilon}^{\bullet,\mathrm{loc},l}(s,\cdot)+U_{n,\varepsilon}^{\bullet,\mathrm{tail},l}(s,\cdot),\quad\text{for $s$ near $s_0$ and any $l\in\mathbb N,$}
	\end{equation}
	where
	\begin{equation*}
		U_{n,\varepsilon}^{\bullet,\mathrm{loc},l}(s,\cdot)
		:=
		c_n\int_{\mathbb R}\chi_\varepsilon^l(t)\,|t|^nK_n\Bigl(\frac{|t|}{2}\Bigr)\,
		\cos\bigl(t\sqrt{\Delta_{\bullet}}\bigr)
		\cos\Bigl(s\sqrt{\Delta_{\bullet}+\tfrac14}\Bigr)\delta_O\,dt,
	\end{equation*}
	with $c_n:=\tfrac{4^n n!}{\pi(2n)!}$.

	First, we claim the coincidence of the low frequency terms 
	\begin{equation}\label{eq:local_terms_coincide}
		U_{n,\varepsilon}^{M,\mathrm{loc},l}
		=
		U_{n,\varepsilon}^{\mathbb{S}^2,\mathrm{loc},l}
		\qquad\text{on }(s_0-\varepsilon,s_0+\varepsilon)\times U_x.
	\end{equation}
	This follows from the finite propagation speed property established in \cite[Theorem~2.4]{CT82a} and from \eqref{eq:finite propagation distance}:
	$$
	\cos\Bigl(s\sqrt{\Delta_M+\tfrac14}\Bigr)\delta_O(x)
	=
	\cos\Bigl(s\sqrt{\Delta_{\mathbb{S}^2}+\tfrac14}\Bigr)\delta_O(x),\quad \text{on $(s_0-\varepsilon,s_0+\varepsilon)\times \widetilde U_x$.}
	$$
	Since $\Delta_M=\Delta_{\mathbb{S}^2}$ on $\widetilde U_x$, we have
	$$
	\cos\bigl(t\sqrt{\Delta_M}\bigr)\cos\Bigl(s\sqrt{\Delta_M+\tfrac14}\Bigr)\delta_O(y)
	=
	\cos\bigl(t\sqrt{\Delta_{\mathbb{S}^2}}\bigr)\cos\Bigl(s\sqrt{\Delta_{\mathbb{S}^2}+\tfrac14}\Bigr)\delta_O(y)
	$$
	for all $|t|<\varepsilon$, $s\in (s_0-\varepsilon,s_0+\varepsilon)$, and $y\in U_x$. This proves \eqref{eq:local_terms_coincide}.
	
	Second, we prove that the difference of the high frequency terms
	\(U_{n,\varepsilon}^{M,\mathrm{tail},l}-U_{n,\varepsilon}^{\mathbb S^2,\mathrm{tail},l}\)
	is real-analytic on \((s_0-\varepsilon,s_0+\varepsilon)\times U_x\).  To prove this, we need the following regularity estimate for $ U_{n,\varepsilon}^{\bullet,\mathrm{tail},l}$.
	\begin{lemma}\label{lemma:KN_goal_final}
		There exists a constant $L_0(\varepsilon)>0$ such that for all $q\in\mathbb{Z}_+$,
		\begin{equation}\label{eq:KN_goal_final}
			\|\Delta_M^q U_{n,\varepsilon}^{\bullet,\mathrm{tail},2q+6}\|_{L^2((s_0-\delta,s_0+\delta)\times U_x)}
			\le (2q)!\,L_0(\varepsilon)^{q+1},\quad \bullet\in\{M,\mathbb{S}^2\}.
		\end{equation}
	\end{lemma}
	The proof is given in the appendix. Assuming Lemma~\ref{lemma:KN_goal_final} temporarily, we complete the proof.  By \eqref{eq:split_microlocal} and \eqref{eq:local_terms_coincide},
	\begin{equation}\label{reduction}
		U_n^{M}-U_n^{\mathbb{S}^2}
		=
		U_{n,\varepsilon}^{M,\mathrm{tail},l}-U_{n,\varepsilon}^{\mathbb{S}^2,\mathrm{tail},l}
		\quad\text{on }(s_0-\varepsilon,s_0+\varepsilon)\times U_x.
	\end{equation}
	Since  $\Delta_M=\Delta_{\mathbb{S}^2}$ on $U_x$, the regularity estimates \eqref{eq:KN_goal_final}, together with \eqref{reduction} and \eqref{eq:solves_the_wave_equation}, show that
	\begin{eqnarray}\label{eq:KN_estimate_local}
		\lefteqn{\|\Delta_{\mathbb{R}\times M}^q(U_n^{M}-U_n^{\mathbb{S}^2})\|_{L^2((s_0-\delta,s_0+\delta)\times U_x)}}\nonumber\\
		&=&
		\|(2\Delta_M+\tfrac14)^q(U_n^{M}-U_n^{\mathbb{S}^2})\|_{L^2((s_0-\delta,s_0+\delta)\times U_x)}\nonumber\\
		&=&
		\|(2\Delta_M+\tfrac14)^q(U_{n,\varepsilon}^{M,\mathrm{tail},2q+6}-U_{n,\varepsilon}^{\mathbb{S}^2,\mathrm{tail},2q+6})\|_{L^2((s_0-\delta,s_0+\delta)\times U_x)}\nonumber\\
		&\le&
		(2q)!\,L_1(\varepsilon)^{q+1},
	\end{eqnarray}
	for some constant $L_1(\varepsilon)>0$. Since \(\mathbb S^2\) is real-analytic, we may choose a real-analytic local coordinate chart on \(U_x\). By \cite[Theorem~1]{Analytic_properties_of_elliptic_and_conditionally_elliptic_operators} and \eqref{eq:KN_estimate_local}, \(U_n^M-U_n^{\mathbb S^2}\) is real-analytic in a neighbourhood of \((s_0,x)\). 
\end{proof}

As an immediate consequence, $u_{\mathrm{half}}^{\sin}$ is real-analytic near $\{\pi\}\times M^{\mathrm I}$.

\begin{corollary}\label{WF(u_half)}
	Under the assumptions and notation of Proposition~\ref{jordan curve theorem}, $u_{\mathrm{half}}^{\sin}(t,x)$ is real-analytic in a neighbourhood of $\{\pi\}\times M^{\mathrm I}$.
\end{corollary}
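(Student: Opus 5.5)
The plan is to reduce the corollary to Proposition~\ref{WF(U)} through the identity $u_{\mathrm{half}}^{\sin}=(-1)^{1+n}\partial_t^{2n+1}U_n^M$. Real-analyticity is preserved by differentiation, so it suffices to show that for every $x\in M^{\mathrm I}$ the point $(\pi,x)$ lies outside $\mathrm{sing\,supp}_a\,U_n^M$ for some fixed $n$ (say $n=0$, or any $n$ for which the identity holds). The union of the neighbourhoods of the points $(\pi,x)$, $x\in M^{\mathrm I}$, on which $U_n^M$ is analytic is then an open neighbourhood of $\{\pi\}\times M^{\mathrm I}$, and $u_{\mathrm{half}}^{\sin}$ is analytic there.

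It remains to verify the hypothesis of Proposition~\ref{WF(U)} at $s=\pi$, namely
$$
d_{\mathbb S^2}(x,O)<\pi<t(x,O)\quad\text{for every } x\in M^{\mathrm I}.
$$
\begin{itemize}
\item \emph{Right inequality.} This is exactly the characterization \eqref{eq:M_I}, $M^{\mathrm I}=\{x\in M:\ t(x,O)>\pi\}$, obtained in the proof of Proposition~\ref{lemma:reflection_shadow_visible_characterization}.
\item \emph{Left inequality.} On $\mathbb S^2$ we always have $d_{\mathbb S^2}(x,O)\le\pi$, with equality only at $x=-O$. So we must check that $-O\notin M^{\mathrm I}$.
\end{itemize}

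The point $-O$ is handled by the three cases of Proposition~\ref{jordan curve theorem}.
\begin{itemize}
\item If $-O\notin N$, then $-O\in\Lambda_M^O$, as shown in the proof of \eqref{eqn: t}, so it lies on the curve and not in $M^{\mathrm I}$.
\item If $-O\in\partial N$, then $-O$ is not in $M$ at all, since it is a boundary point of the obstacle.
\item If $-O\in\operatorname{Int}(N)$, then $-O\notin M$.
\end{itemize}
Alternatively, one can argue directly: for any $z\in\partial M$, the triangle inequality gives $t(-O,O)\le d_{\mathbb S^2}(O,z)+d_{\mathbb S^2}(z,-O)=\pi$. Hence $-O\notin\{t>\pi\}=M^{\mathrm I}$ whenever $-O\in M$. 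In every case $d_{\mathbb S^2}(x,O)<\pi$ on $M^{\mathrm I}$.

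The only delicate point is this exclusion of the antipode $-O$, together with making sure that \eqref{eq:M_I} is used with the correct choice of component. The remaining steps, invoking Proposition~\ref{WF(U)} and differentiating in $t$, are immediate.
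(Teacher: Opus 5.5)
Your proposal is correct and follows the paper's proof essentially step for step: the identity $u_{\mathrm{half}}^{\sin}=(-1)^{1+n}\partial_t^{2n+1}U_n^M$ reduces the claim to Proposition~\ref{WF(U)} at $s=\pi$, the bound $t(x,O)>\pi$ comes from \eqref{eq:M_I}, and $d_{\mathbb S^2}(x,O)<\pi$ follows from excluding the antipode via $-O\in\Lambda_M^O\cup N$, just as you do. One minor wording point: in your alternative argument, $t(-O,O)\le\pi$ follows from the definition of the infimum together with the identity $d_{\mathbb S^2}(O,z)+d_{\mathbb S^2}(z,-O)=\pi$ for every $z$, not from the triangle inequality.
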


\begin{proof}
	Since $
	u_{\mathrm{half}}^{\sin}=(-1)^{1+n}(\partial_t)^{2n+1}U_n^{M},$ Proposition~\ref{WF(U)} implies that
	$$
	(s,x)\notin \mathrm{sing\,supp}_a\,u_{\mathrm{half}}^{\sin}
	\qquad \mbox{whenever $d_{\mathbb{S}^2}(x,O)<s<t(x,O)$}  
.
	$$
	
	Now let $x\in M^{\mathrm I}$. By \eqref{eq:M_I}, we have $t(x,O)>\pi.$ Since $-O\in \Lambda_M^O\cup N$, it also follows that $x\neq -O$ and $d_{\mathbb{S}^2}(x,O)<\pi$. Then we use Proposition~\ref{WF(U)} to have
	$$
	(\pi,x)\notin \mathrm{sing\,supp}_a\,u_{\mathrm{half}}^{\sin}.
	$$
	Since $x\in M^{\mathrm I}$ was arbitrary, we conclude that
	$$
	\bigl(\{\pi\}\times M^{\mathrm I}\bigr)\cap \mathrm{sing\,supp}_a\,u_{\mathrm{half}}^{\sin}=\emptyset.
	$$
	Hence $u_{\mathrm{half}}^{\sin}(t,x)$ is real-analytic in a neighbourhood of $\{\pi\}\times M^{\mathrm I},$ as claimed.
\end{proof}

\subsection{Asymptotic behavior of spherical sine half waves}\label{Asymptotic_behavior_of_the_half wave_propagator}

In this subsection we study the asymptotic expansion at time $t=\pi$ of the spherical sine half wave  \eqref{spherical sine half wave propagator} near the spherical wavefront $\Lambda_M^O$.

To achieve this, we apply $(\Delta_M+\tfrac14)^{1/2}$ to the corresponding asymptotic expansion of the spherical sine wave 
$$
u_{\sin}(t,x)
:=
(\Delta_M+\tfrac14)^{-1/2}
\sin\Bigl(t\sqrt{\Delta_M+\tfrac14}\Bigr)\delta_O(x)
$$
near $\Lambda_M^O\setminus\{-O\}$. This is done by means of the reflected Hadamard parametrix constructed in \cite[Section~17.4]{Ho_PDO_3} and \cite{CT82b}.

\cite[(5.35)]{CT82b} derives the asymptotic formula for the spherical cosine wave
$$
u_{\cos}(t,x)
:=
\cos\Bigl(t\sqrt{\Delta_M+\tfrac14}\Bigr)\delta_O(x)
$$
near \(t=\pi\) at each point \(x\in \Lambda_M^O\setminus\{-O\}\subset M\) satisfying:
\begin{itemize}
	\item exactly one reflected spherical ray passes through $x$;
	\item the exponential map is nondegenerate at $x$ before $\pi$ time.
\end{itemize}
In our setting, these conditions follow from Proposition~\ref{one ray intersection} and Lemma~\ref{spherical geodesic non-degeneracy}. Differentiating $(\Delta_M+\tfrac14)^{-1}u_{\cos}(t,x)$ yields the following asymptotic formula for the spherical sine wave $u_{\sin}(\pi,\cdot)$ near $\Lambda_M^O\setminus\{-O\}$.

\begin{lemma}\label{the Hadamard parametrix for sine wave propagator}Let \(x\in \Lambda_M^O\setminus\{-O\}\). Then there exists a neighbourhood \(U_x\ni x\) such that, for any \(m>0\),
	\begin{equation*}\label{sine wave asymptotic}
		u_{\sin}(\pi,y)\pmod{H^m(U_x)}=
		\begin{cases}
			0, & y\in M^{\mathrm I}\cap U_x,\\[4pt]
			\Gamma(y)^{-1/2}\displaystyle\sum_{n=0}^{\mu}A_n(y)\,\Gamma(y)^n,
			& y\in M^{\mathrm {II}}\cap U_x,
		\end{cases}
		,
	\end{equation*}
	for $\mu>m+1/2$. Here
	$$
	\Gamma(y):=|\pi^2-\widetilde t^{\,2}|,
	\qquad
	\text{for } y=\gamma_{(O,\widetilde\eta)}^B(\widetilde t)\in U_x
	\text{ with }
	\widetilde t>t_B(\widetilde\eta),
	$$
	the leading coefficient $A_0$ is nowhere vanishing on $U_x$, and $A_n\in C^\infty(U_x)$ for $n\ge 0$. Equivalently,   the Fourier transform of homogeneous distributions in \cite[Eq.~(6.72)]{Ze17} yields
	\begin{equation}\label{eq:final-FIO}
		u_{\sin}(\pi,y)=
		\sum_{n=0}^{\mu}
		\int_{\mathbb R}
		 a_n(y)\,(\sigma+i0)^{-1/2-n}
		e^{-i(\pi^2-\widetilde t(y)^2)\sigma}\,d\sigma
		\pmod{H^m(U_x)}.
	\end{equation}
\end{lemma}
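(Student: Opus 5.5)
The plan is to derive this lemma from the Cheeger--Taylor Hadamard parametrix for the cosine wave, \cite[(5.35)]{CT82b}, by taking an antiderivative in time. Near $x\in\Lambda_M^O\setminus\{-O\}$ there is exactly one reflected ray through each $y\in U_x$ reaching $y$ at time close to $\pi$ (Proposition~\ref{one ray intersection}). The map $(t,\eta)\mapsto\gamma^B_{(O,\eta)}(t)$ is a local diffeomorphism there (Lemma~\ref{spherical geodesic non-degeneracy}). So $\widetilde t(y)$ is a smooth function on $U_x$, with $\widetilde t<\pi$ on $M^{\mathrm I}\cap U_x$ (not yet reached, consistent with $t(y,O)>\pi$ by \eqref{eq:M_I}) and $\widetilde t>\pi$ on $M^{\mathrm{II}}\cap U_x$. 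Hence $\Gamma$ is smooth, with nonvanishing gradient across $\Lambda_M^O$ (since $|\nabla\widetilde t|=1$).

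The reflected Hadamard parametrix (H\"ormander \cite[Section~17.4]{Ho_PDO_3}) represents the reflected part of the fundamental solution $E(t,y)$ of $\partial_{tt}+\Delta_M+\tfrac14$ near $t=\pi$. Up to $C^\infty$ error (the incident part is smooth near $(\pi,x)$ since $d_{\mathbb S^2}(y,O)<\pi$, and by Case 2 of Proposition~\ref{one ray intersection} the direct geodesic contributes nothing singular), it has the form
\[
\sum_{n} a_n(y)\,\int_{\mathbb R}(\sigma+i0)^{-1/2-n}e^{-i(t^2-\widetilde t(y)^2)\sigma}\,d\sigma .
\]
This is the two-dimensional Riesz-type expansion $\sum_n a_n\,(t^2-\widetilde t^{\,2})_+^{\,n-1/2}/\Gamma(n+\tfrac12)$, with the sign determined by the reflection (Dirichlet condition). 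Because $u_{\sin}(t,\cdot)=\partial_t^{-1}u_{\cos}$ with $u_{\sin}(0)=0$, it is exactly this forward solution $E$. Alternatively, one can integrate the expansion \cite[(5.35)]{CT82b} of $u_{\cos}$ in $t$, or apply $(\Delta_M+\tfrac14)^{-1}$ and differentiate, as the paper suggests. Evaluating at $t=\pi$ gives the stated formula: zero where $\pi<\widetilde t$, which is $M^{\mathrm I}$, and $\Gamma^{-1/2}\sum A_n\Gamma^n$ on $M^{\mathrm{II}}$. The equivalent oscillatory form \eqref{eq:final-FIO} then follows from the Fourier transform of $(\sigma+i0)^{-1/2-n}$ as in \cite[Eq.~(6.72)]{Ze17}.

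The $H^m$ truncation comes from the regularity of the terms. Since $\Gamma$ is a defining function, the term $\Gamma^{n-1/2}_+$ lies in $H^{s}$ for $s<n$, in two dimensions and locally. The remainder after $\mu$ terms is therefore in $H^m$ once $\mu>m+1/2$. The parametrix error is controlled by the standard energy/Duhamel estimate on $[0,\pi+\delta]$, which gains one derivative per additional transport term.

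The main obstacle is showing that $A_0$ is nowhere vanishing. $A_0$ equals, up to a universal constant, the leading transport amplitude: $\Delta^{1/2}$ (the Van Vleck--Morette determinant along the broken ray) times the Dirichlet reflection factor $-1$. The determinant is finite and nonzero precisely when the broken exponential map is nondegenerate up to time $\pi$, which is Lemma~\ref{spherical geodesic non-degeneracy}. So I would solve the transport equation along each reflected ray, continuing it through $x_B$ via the reflection matching condition, and use that Jacobian bound.
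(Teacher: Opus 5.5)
Your proposal follows essentially the same route as the paper: invoke the reflected Hadamard parametrix of \cite{CT82b} (with \cite[Section~17.4]{Ho_PDO_3}), verify its two hypotheses via Proposition~\ref{one ray intersection} and Lemma~\ref{spherical geodesic non-degeneracy}, pass from the cosine to the sine propagator by integrating in time (equivalently, applying $(\Delta_M+\tfrac14)^{-1}$ and differentiating), and convert to the oscillatory form via \cite[Eq.~(6.72)]{Ze17}. You go further than the paper, which leaves these points to the citation, by spelling out why $A_0\neq0$ and why truncating at $\mu$ terms is harmless. One slip to fix: the inequalities in your first paragraph are reversed. On $M^{\mathrm I}\cap U_x$ one has $\widetilde t>\pi$ (matching $t(y,O)>\pi$), and on $M^{\mathrm{II}}\cap U_x$ one has $\widetilde t<\pi$, which is what you correctly use when evaluating at $t=\pi$. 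Also, it is $\pi^2-\widetilde t^{\,2}$, not $\Gamma$, that is the smooth defining function of $\Lambda_M^O$.
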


We now pass from the spherical sine wave to the spherical sine half wave.

\begin{proposition}\label{half wave asymptotic prop}Let \(x\in \Lambda_M^O\setminus\{-O\}\). Then there exists a neighbourhood \(U_x\ni x\) such that, for any \(m>0\),
	\begin{equation}\label{half wave asymptotic}
		u_{\mathrm{half}}^{\sin}(\pi,y)
		\pmod{H^{m-1}(U_x)}=
		\left\{
		\begin{aligned}
			&\Gamma(y)^{-3/2}\sum_{n=0}^{\mu}B_n(y)\,\Gamma(y)^n && y\in M^{\mathrm I}\cap U_x,\\
			&0 && y\in M^{\mathrm{II}}\cap U_x,
		\end{aligned}
		\right.,
	\end{equation}
	for $\mu>m+1/2$, where $B_0$ is nowhere vanishing on $U_x$ and $B_n\in C^\infty(U_x)$ are real-valued for all $n\ge 0$.
\end{proposition}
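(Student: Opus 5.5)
We obtain Proposition~\ref{half wave asymptotic prop} from Lemma~\ref{the Hadamard parametrix for sine wave propagator} by applying the first-order operator $(\Delta_M+\tfrac14)^{1/2}$. By definition,
\[
u_{\mathrm{half}}^{\sin}(\pi,\cdot)=(\Delta_M+\tfrac14)^{1/2}u_{\sin}(\pi,\cdot).
\]
The operator $(\Delta_M+\tfrac14)^{1/2}$ is a classical elliptic pseudodifferential operator of order one in the interior of $M$, with principal symbol $|\zeta|_{g}$, and $U_x$ can be taken relatively compact in $M$. Localising with a cutoff $\chi\in C_c^\infty(U_x)$ equal to $1$ near $x$, the remainder $(1-\chi)u_{\sin}$ contributes only a smoothing term near $x$, since pseudodifferential operators are pseudolocal. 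Likewise, the $H^m$ error in Lemma~\ref{the Hadamard parametrix for sine wave propagator} is mapped into $H^{m-1}$. After shrinking $U_x$, it therefore suffices to compute $(\Delta_M+\tfrac14)^{1/2}$ applied to each oscillatory term in \eqref{eq:final-FIO}.

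The computation uses the Fourier-integral representation \eqref{eq:final-FIO}. Set $\phi(y,\sigma)=-(\pi^2-\widetilde t(y)^2)\sigma$. By Lemma~\ref{spherical geodesic non-degeneracy}, $\widetilde t$ is smooth near $x$, and $d\widetilde t\neq0$ because $\widetilde t$ is the time coordinate of a local diffeomorphism. Hence $d_y\phi=2\widetilde t\,d\widetilde t\,\sigma\neq0$, and each term is a Lagrangian distribution associated with the conormal bundle of $\Lambda_M^O$, with symbol $a_n(y)(\sigma+i0)^{-1/2-n}$. The standard formula for a pseudodifferential operator acting on such an oscillatory integral gives
\[
P\!\int a\,e^{i\phi}\,d\sigma=\int\bigl(p(y,d_y\phi)\,a+\text{lower order}\bigr)e^{i\phi}\,d\sigma .
\]
Here $p(y,d_y\phi)=2\widetilde t\,|d\widetilde t|\,|\sigma|$, where $|d\widetilde t|=1$ because $\widetilde t$ is a distance-type function along the reflected rays.

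The amplitude of the $n$-th term thus becomes $b_n(y)\,|\sigma|(\sigma+i0)^{-1/2-n}$ plus lower-order terms. Here is the main obstacle: $|\sigma|$ is not analytic in $\sigma$, so it swaps which side of $\Lambda_M^O$ carries the support. I would split the $\sigma$-integral into $\sigma>0$ and $\sigma<0$ and use $(\sigma+i0)^{a}=\sigma_+^{a}+e^{i\pi a}\sigma_-^{a}$. Writing $|\sigma|=\sigma_+-\sigma_-$ and comparing with $\sigma(\sigma+i0)^{-1/2-n}$ gives
\[
|\sigma|(\sigma+i0)^{-1/2-n}=\sigma_+^{1/2-n}-e^{-i\pi(1/2+n)}\sigma_-^{1/2-n}
= c\,(\sigma-i0)^{1/2-n}
\]
for an explicit unimodular constant $c$. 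The inverse Fourier transform of $(\sigma-i0)^{1/2-n}$ in \cite[Eq.~(6.72)]{Ze17} is supported on the opposite half-line to that of $(\sigma+i0)^{1/2-n}$. Consequently the expansion is supported in $M^{\mathrm I}$, where $\widetilde t>\pi$, rather than in $M^{\mathrm{II}}$. It has the singular power $\Gamma^{-1/2-1}=\Gamma^{-3/2}$, consistent with the corresponding homogeneous distribution $(\pi^2-\widetilde t^2)_{\mp}^{-3/2}$ in the regularised sense, which on the open set $M^{\mathrm I}\cap U_x$ is a genuine function.

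The leading coefficient satisfies $B_0=c'\,2\widetilde t\,A_0$, which is nowhere vanishing since $\widetilde t\approx\pi>0$. Reality of the $B_n$ follows because $u_{\mathrm{half}}^{\sin}(\pi,\cdot)$ is real, as $\Delta_M$ is real and $\delta_O$ is real. The phase constants can then be absorbed into the $B_n$ with the appropriate sign, and the $B_n$ may be replaced by their real parts, since any imaginary contribution must be smooth. Finally, I truncate at $\mu>m+1/2$ and check that each omitted term lies in $H^{m-1}(U_x)$. The term of order $\Gamma^{-3/2+n}$ behaves like the Fourier transform of $\sigma^{1/2-n}$ conormally, so it lies in $H^{s}$ for $s<n-1$.
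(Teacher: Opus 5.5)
Your proposal follows the paper's route for the core computation. You apply the order-one operator to the conormal representation \eqref{eq:final-FIO} and pick up the principal symbol $2\widetilde t\,|d\widetilde t|\,|\sigma|$ with $|d\widetilde t|=1$. The factor $|\sigma|$ then moves the support from $M^{\mathrm{II}}$ to $M^{\mathrm I}$ via $|\sigma|(\sigma+i0)^{-1/2-n}=(\sigma-i0)^{1/2-n}$, and you finish with the Fourier transform formula of \cite{Ze17} and the reality of $u_{\mathrm{half}}^{\sin}$.

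\textbf{Sign error in the key identity.} With your convention $(\sigma+i0)^{a}=\sigma_+^{a}+e^{i\pi a}\sigma_-^{a}$, i.e.\ $\sigma_-=\max(-\sigma,0)$, one has $|\sigma|=\sigma_++\sigma_-$; the expression $\sigma_+-\sigma_-$ is $\sigma$ itself. As written, your middle expression equals $\sigma(\sigma+i0)^{-1/2-n}=(\sigma+i0)^{1/2-n}$. This is \emph{not} a constant multiple of $(\sigma-i0)^{1/2-n}$: the coefficients agree on $\sigma>0$ but differ by the factor $-1$ on $\sigma<0$. It would also leave the support on $M^{\mathrm{II}}$, contradicting your conclusion. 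With the correct sign you get $\sigma_+^{1/2-n}+e^{-i\pi(1/2+n)}\sigma_-^{1/2-n}=(\sigma-i0)^{1/2-n}$ exactly, so $c=1$, as in the paper. Since the side switch is the whole content of the proposition, fix this.

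\textbf{Lower-order terms.} Your ``lower-order terms'' must also carry the factor $|\sigma|$, or the claim ``$0$ on $M^{\mathrm{II}}$ modulo $H^{m-1}$ for every $m$'' fails. They do: the degree-$(1-j)$ term of the symbol of $(\Delta+\tfrac14)^{1/2}$ has parity $(-1)^j$ in $\xi$. At $\xi=\sigma\,d\phi$ it is therefore $|\sigma|\sigma^{-j}$ times a smooth function, and $|\sigma|\sigma^{-j}(\sigma+i0)^{-1/2-n}=(\sigma-i0)^{1/2-n-j}$. The paper leaves this implicit in its citation of \cite[Theorem~18.2.12]{Ho_PDO_3}.

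\textbf{The real gap: localization.} You dispose of this step by saying $(\Delta_M+\tfrac14)^{1/2}$ is an interior pseudodifferential operator and is pseudolocal. That is not enough as stated. The operator is defined by the Dirichlet spectral calculus on $M$, and $(1-\chi)u_{\sin}(\pi,\cdot)$ is supported up to $\partial M$ and carries the other singularities of the wave. What you actually need is:
\begin{itemize}
\item the Schwartz kernel of $(\Delta_M+\tfrac14)^{1/2}$ is smooth on a neighbourhood of $x$ times $M\setminus\operatorname{supp}\chi$, boundary included;
\item near $x$ the operator agrees with a genuine pseudodifferential operator modulo smoothing.
\end{itemize}
This is true, and can be extracted from Seeley's theory of complex powers of elliptic boundary problems, but it is exactly the part the paper proves. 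The paper's argument runs as follows.
\begin{enumerate}
\item Write $(\Delta_\bullet+\tfrac14)^{-k-\frac12}$ as $c_k\int|t|^kK_k(|t|/2)\cos(t\sqrt{\Delta_\bullet})\,dt$.
\item For $|t|<\varepsilon$, use finite propagation speed both to discard $(1-\chi_U)u_{\sin}$ and to replace $\Delta_M$ by $\Delta_{\mathbb S^2}$.
\item The $(1-\chi_\varepsilon(t))$ part has a Schwartz multiplier and is therefore smoothing.
\item Apply the local operator $(\Delta+\tfrac14)^{k+1}$ to obtain $u_{\mathrm{half}}^{\sin}(\pi,\cdot)=(\Delta_{\mathbb S^2}+\tfrac14)^{1/2}[\chi_Uu_{\sin}(\pi,\cdot)]$ modulo $C^\infty(U_x)$.
\end{enumerate}
Only then is the calculus on the closed manifold $\mathbb S^2$ applied to a conormal distribution. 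You should either reproduce this argument or cite a precise result for the boundary problem. Your remaining steps are fine: the $H^m\to H^{m-1}$ error, nonvanishing of $B_0$, reality of the $B_n$, and truncation at $\mu>m+1/2$.
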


\begin{proof}
	Since $\sqrt{\Delta_\bullet+\tfrac{1}{4}}$ for $\bullet\in\{M,\mathbb{S}^2\}$ is a nonlocal operator, we need to localize $u_{\mathrm{half}}^{\sin}(\pi,y)$ in a neighbourhood $U$ of $U_x$. We achieve this as follows: for any open sets \(U_0,U\subset M\) such that
		\[
		\overline{U_x}\subset U_0\Subset U\Subset M,
		\]
		we choose cutoff functions \(\chi_U\in C_c^\infty(U)\) and \(\chi_\varepsilon\in C_c^\infty(\mathbb R)\) satisfying
		\begin{align*}
			\chi_U&\equiv1\qquad\text{on }U_0;\\
			\chi_\varepsilon(t)&\equiv1\qquad\text{for }|t|\leq \varepsilon/2,
			\qquad
			\supp(\chi_\varepsilon)\subset[-\varepsilon,\varepsilon],
		\end{align*}
		where $0<\varepsilon<d(U_x,M\setminus U_0).$

	For $k\in\mathbb{Z}_+$, we refine the decomposition of \eqref{eq:split_microlocal} by
	\begin{align}\label{calculate Im half wave}
		\lefteqn{(\Delta_\bullet+\tfrac14)^{-k-\frac12}u_{\sin}(\pi,\cdot)}\nonumber\\
	&	=
		c_k\underbrace{\int_{\mathbb R}
			\chi_\varepsilon(t)|t|^kK_k\Bigl(\frac{|t|}{2}\Bigr)
			\cos\bigl(t\sqrt{\Delta_\bullet}\bigr)
			\bigl[\chi_Uu_{\sin}(\pi,\cdot)\bigr]\,dt}_{\mathrm{ I}_\bullet}
		\nonumber\\
	&	+
		c_k\underbrace{\int_{\mathbb R}
			\chi_\varepsilon(t)|t|^kK_k\Bigl(\frac{|t|}{2}\Bigr)
			\cos\bigl(t\sqrt{\Delta_\bullet}\bigr)
			\bigl[(1-\chi_U)u_{\sin}(\pi,\cdot)\bigr]\,dt}_{\mathrm {II}_\bullet}
		\nonumber\\
	&	+
		c_k\underbrace{\int_{\mathbb R}
			(1-\chi_\varepsilon(t))|t|^kK_k\Bigl(\frac{|t|}{2}\Bigr)
			\cos\bigl(t\sqrt{\Delta_\bullet}\bigr)
			u_{\sin}(\pi,\cdot)\,dt}_{\mathrm {III}_\bullet}.
	\end{align}
	By the choice of \(\varepsilon\) and finite propagation speed, $\mathrm {II}_\bullet\equiv0$ $\text{on }U_x.$
	By the asymptotic expansion of $K_k$ in \cite[p.~202, Eq.~(1)]{Watson44}, we have
	\[
	(1-\chi_\varepsilon(t))|t|^k
	K_k\Bigl(\frac{|t|}{2}\Bigr)\in\mathcal S(\mathbb R),
	\]
	and thus $\mathrm {III}_\bullet\in C^\infty(M).$
	
	We next prove $\mathrm{I}_{\mathbb{S}^2}= \mathrm{I}_M$. Since \(\chi_Uu_{\sin}(\pi,\cdot)\) has compact support in \(M\), we extend it by zero to a distribution on \(\mathbb S^2\). Hence finite propagation speed and
	the identity \(\Delta_M=\Delta_{\mathbb S^2}\) on \(M\) imply
	\[
	\cos\bigl(t\sqrt{\Delta_{\mathbb S^2}}\bigr)
	\bigl[\chi_Uu_{\sin}(\pi,\cdot)\bigr]
	=
	\cos\bigl(t\sqrt{\Delta_M}\bigr)
	\bigl[\chi_Uu_{\sin}(\pi,\cdot)\bigr],\qquad\text{for }|t|<\epsilon,
	\]
	on \(U_x\).  Consequently, $\mathrm{I}_{\mathbb{S}^2}= \mathrm{I}_M.$
	
	Combining this with \eqref{calculate Im half wave}, we obtain
	\[
	(\Delta_M+\tfrac14)^{-k-\frac12}u_{\sin}(\pi,\cdot)
	=
	(\Delta_{\mathbb S^2}+\tfrac14)^{-k-\frac12}
	\bigl[\chi_Uu_{\sin}(\pi,\cdot)\bigr]
	\pmod{C^\infty(U_x)}.
	\]
	Applying the local differential operator
	\((\Delta_\bullet+\tfrac14)^{k+1}\) and using 
	\(\Delta_M=\Delta_{\mathbb S^2}\) on \(U_x\), we obtain
	\begin{equation*}\label{turn into S}
		u_{\mathrm{half}}^{\sin}(\pi,\cdot)
		=
		(\Delta_{\mathbb S^2}+\tfrac14)^{1/2}
		\bigl[\chi_Uu_{\sin}(\pi,\cdot)\bigr]
		\pmod{C^\infty(U_x)}.
	\end{equation*}
	By \eqref{eq:final-FIO},
	\[
	\chi_Uu_{\sin}(\pi,\cdot)(y)
	=
	\sum_{n=0}^{\mu}
	\int_{\mathbb R}
	\chi_U(y)a_n(y)(\sigma+i0)^{-1/2-n}
	e^{-i(\pi^2-\widetilde t(y)^2)\sigma}\,d\sigma
	\pmod{H^m(U_x)}.
	\]
	Since $(\Delta_{\mathbb S^2}+\tfrac14)^{1/2}
	\in\Psi^1(\mathbb S^2)$ has principal symbol \(|\xi|_{\mathbb S^2}\), the action of
	pseudodifferential operators on conormal distributions
	\cite[Theorem~18.2.12]{Ho_PDO_3} yields
	\begin{multline*}\label{FIO representation for half wave propagator}
		u_{\mathrm{half}}^{\sin}(\pi,\cdot)(y)
		=(\Delta_{\mathbb S^2}+\tfrac14)^{1/2}
		\bigl[\chi_Uu_{\sin}(\pi,\cdot)\bigr](y)\\
		=
		\sum_{n=0}^{\mu}
		\int_{\mathbb R}
		b_n(y)|\sigma|(\sigma+i0)^{-1/2-n}
		{}
		e^{-i(\pi^2-\widetilde t(y)^2)\sigma}\,d\sigma
		\pmod{H^{m-1}(U_x)},
	\end{multline*}
	where \(b_0\) is nowhere vanishing on \(U_x\) and $b_n\in C^\infty(U_x)$ are real-valued for all $n\ge 0$.
	
	Since for every \(n\in\mathbb Z\),
	\[
	|\sigma|(\sigma+i0)^{-1/2-n}
	=
	(\sigma-i0)^{1/2-n},
	\]
	the Fourier
	transform of $(\sigma+i0)^{1/2-n}$ in
	\cite[Eq.~(6.72)]{Ze17} gives
	\begin{align*}
		u_{\mathrm{half}}^{\sin}(\pi,\cdot)(y)
		&=
		\overline{
			\sum_{n=0}^{\mu}
			\int_{\mathbb R}
			b_n(y)(\sigma+i0)^{1/2-n}
			e^{-i(\widetilde t(y)^2-\pi^2)\sigma}\,d\sigma
		}\\
		&=
		\sum_{n=0}^{\mu}
		B_n(y)
		\bigl(\widetilde t(y)^2-\pi^2\bigr)_+^{-3/2+n}
		\pmod{H^{m-1}(U_x)}.
	\end{align*}
	This proves \eqref{half wave asymptotic}.
\end{proof}

\subsection{Uniqueness inversion of spherical wavefronts}
\label{sec:proof_of_proposition_C}

We now prove that the time-$\pi$ spherical sine half wave determines the spherical wavefront. The proof uses two facts established above: the propagator is real-analytic on $M^{\mathrm I}$, while along $\Lambda_M^O$ it admits an asymptotic expansion with blow-up behavior from the $M^{\mathrm I}$-side.

\begin{proof}[Proof of Proposition~\ref{prop:half_wave_to_wavefront}]
	Let $U\subset M_{(1)}^{\mathrm I}\cap M_{(2)}^{\mathrm I}$ be a
	nonempty open set on which \eqref{eqn : sine half wave on S} holds, and
	let $C$ be a connected component of
	$M_{(1)}^{\mathrm I}\cap M_{(2)}^{\mathrm I}$ such that
	$U\cap C\neq\emptyset$.
	By Corollary~\ref{WF(u_half)}, for each $i=1,2$,
	$$
	x\longmapsto
	\sin\Bigl(\pi\sqrt{\Delta_{M_{(i)}}+\tfrac14}\Bigr)\delta_O(x)
	$$
	is real-analytic in $M_{(i)}^{\mathrm I}$. Hence, by the identity theorem
	for real-analytic functions, \eqref{eqn : sine half wave on S} extends to
	$C$.
	
	To conclude the proof, it suffices to prove
	$M_{(1)}^{\mathrm I}= M_{(2)}^{\mathrm I}$. Without loss of generality, we
	show that $M_{(2)}^{\mathrm I}\subset M_{(1)}^{\mathrm I}.$ Assume, for
	contradiction, that there exists
	$$
	p\in M_{(2)}^{\mathrm I}\setminus M_{(1)}^{\mathrm I}.
	$$
	Since $M_{(1)}^{\mathrm I}\cap M_{(2)}^{\mathrm I}\neq \emptyset$ and $M_{(2)}^{\mathrm I}$ is a connected open set by Proposition~\ref{jordan curve theorem}, we must have
	$$
	\bigl(\Lambda_{M_{(1)}}^{O}\setminus\{-O\}\bigr)\cap M_{(2)}^{\mathrm I}\cap \overline{C}\neq\emptyset.
	$$
	Choose $x\in \bigl(\Lambda_{M_{(1)}}^{O}\setminus\{-O\}\bigr)\cap M_{(2)}^{\mathrm I}\cap \overline{C}.$  Since $M_{(2)}^{\mathrm I}$ is open and
	$x\notin \Lambda_{M_{(2)}}^{O}$, there exists a sufficiently small
	neighbourhood $U_x\subset M_{(2)}^{\mathrm I}$ of $x$ such that
	$U_x\cap \Lambda_{M_{(2)}}^{O}=\emptyset.$ Then
	Corollary~\ref{WF(u_half)} implies that
	\begin{equation}\label{eq:smoothness_of_second_kernel_near_x}
		\sin\Bigl(\pi\sqrt{\Delta_{M_{(2)}}+\tfrac14}\Bigr)\delta_O
		\in C^\infty(U_x).
	\end{equation}
	On the other hand, since
	$x\in \Lambda_{M_{(1)}}^{O}\setminus\{-O\},$ the asymptotic formula
	\eqref{half wave asymptotic} shows that
	$$
	\Bigl|
	\sin\Bigl(\pi\sqrt{\Delta_{M_{(1)}}+\tfrac14}\Bigr)\delta_O(y)
	\Bigr|
	\to +\infty
	\qquad
	\mbox{as $y\to x$ for $y\in C\cap U_x$}.
	$$
	However, the two distributions coincide on $C$, contradicting
	\eqref{eq:smoothness_of_second_kernel_near_x}.
\end{proof}

\section{Recovery of spherical obstacles}\label{sec : spher obst}

Finally, we show that the reconstructable part of $\partial M$ is uniquely determined by $\Lambda_M^O$. 
As illustrated in Figure~\ref{fig:boundary_recovery_visible}, each point of the spherical wavefront together with its normal direction determines a unique reflection point on $\partial M$, which in turn lifts to the visible part of the cone. 
\begin{figure}[htbp]
	\centering
	\includegraphics[width=0.8\textwidth]{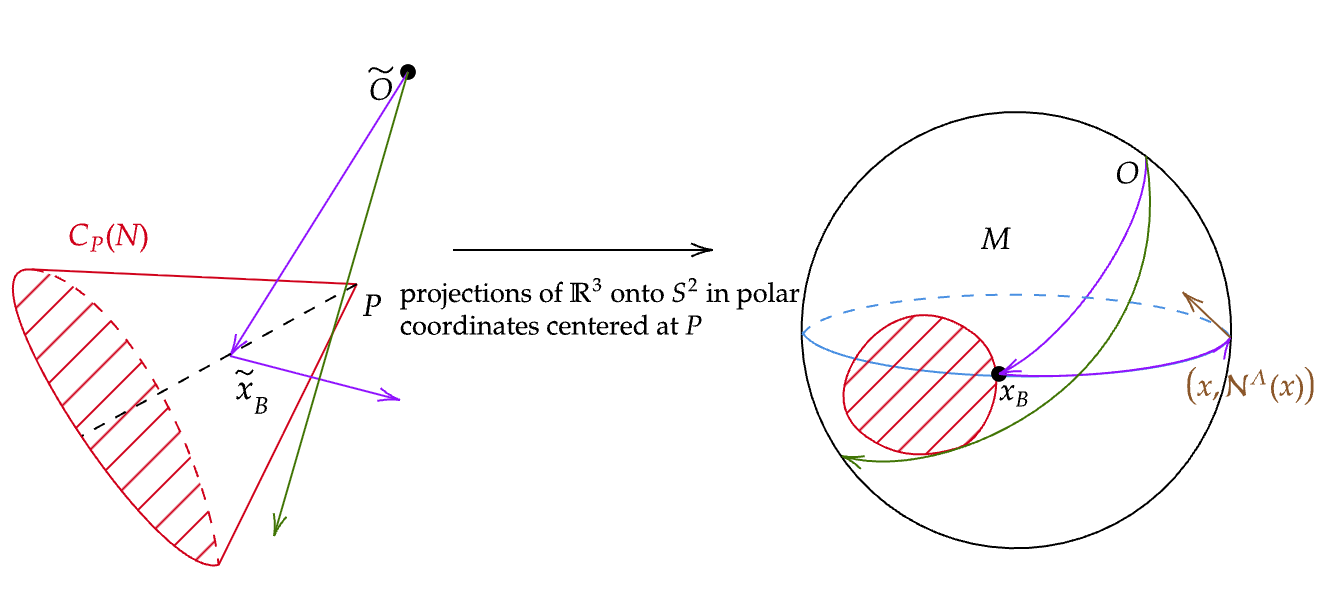}
	\caption{The spherical wavefront determines the visible part by radial projection.}
	\label{fig:boundary_recovery_visible}
\end{figure}

\begin{proposition}\label{reconstruct_boundary}
	Let $\mathcal N^\Lambda$ be the unit normal vector field along $\Lambda_M^O\setminus\{-O\}$ pointing into $M^{\mathrm I}$. For each	$
	x\in \Lambda_M^O\setminus\{-O\}$, suppose
	 $\gamma^B_{(O,\xi_x)}$ is the unique reflected spherical ray   such that $x=\gamma^B_{(O,\xi_x)}(\pi).$
Then the reflection point map
	\[
	\mathcal{R}:\Lambda_M^O\setminus\{-O\}\longrightarrow \partial M,
	\qquad
	\mathcal{R}(x):=\gamma^B_{(O,\xi_x)}\bigl(t_B(\xi_x)\bigr).
	\] is well-defined, and the boundary point $\mathcal{R}(x)$ is uniquely determined by the pair $\bigl(x,\mathcal N^\Lambda(x)\bigr).$
\end{proposition}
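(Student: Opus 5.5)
Well-definedness comes first. Fix $x\in\Lambda_M^O\setminus\{-O\}$. The proof of Proposition~\ref{stratification} shows that the only non-reflected extended geodesics from $O$ reach $-O$ at time $\pi$, so some reflected spherical ray $\gamma^B_{(O,\xi_x)}$ satisfies $\gamma^B_{(O,\xi_x)}(\pi)=x$. Proposition~\ref{one ray intersection} with $t_0=\pi$ makes $\xi_x$ unique. Proposition~\ref{counting intersection} gives at most one boundary hit on $[0,\pi]$, and the ray does hit the boundary, so $t_B(\xi_x)$ is a well-defined first (and only) hitting time. Hence $\mathcal R(x)$ is well defined.

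To show that $(x,\mathcal N^\Lambda(x))$ determines $\mathcal R(x)$, I will prove that the outgoing direction of the ray at time $\pi$ is $\dot\gamma^B_{(O,\xi_x)}(\pi)=\mathcal N^\Lambda(x)$. This is a Gauss-lemma argument for the broken exponential map. Write $\Lambda_M^O$ near $x$ as $\eta\mapsto\exp_O^B(\pi,\eta)$, as in the proof of Proposition~\ref{stratification}. The variation field $J(t)=\partial_\eta\exp_O^B(t,\eta)|_{\eta=\xi_x}$ is a Jacobi field along the smooth pieces. At the reflection point its normal component is reflected and the tangential derivative is continuous, which is the standard behaviour of Jacobi fields under specular reflection. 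Consequently $\langle J,\dot\gamma\rangle$ is continuous and affine in $t$ on each piece. The function $\langle J(t),\dot\gamma(t)\rangle$ has derivative $\langle J',\dot\gamma\rangle=\tfrac12\partial_\eta|\dot\gamma|^2=0$ on each piece, so it is constant. It vanishes at $t=0$ because $J(0)=0$, and it is preserved across reflection because both vectors are reflected by the same isometry of $T_{x_B}\mathbb S^2$. Hence $\langle J(\pi),\dot\gamma(\pi)\rangle=0$, so $\dot\gamma(\pi)\perp T_x\Lambda_M^O$ and $\dot\gamma(\pi)=\pm\mathcal N^\Lambda(x)$.

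To fix the sign, I use Lemma~\ref{spherical geodesic non-degeneracy}: $\exp_O^B$ is a local diffeomorphism at $(\pi,\xi_x)$, so $\exp_O^B(\pi+s,\xi_x)$ for small $s>0$ lies in the region reached at times $>\pi$. That region should be $M^{\mathrm I}=\{t(\cdot,O)>\pi\}$ by \eqref{eq:M_I}. To justify this, I will argue that locally near $x$ the reflected travel time $t(y,O)$ coincides with the $\widetilde t(y)$ coordinate of $\exp_O^B$ (the broken ray through $y$ is the minimizer in \eqref{the t travel-time function}). From this, $t(\exp_O^B(\pi+s,\xi_x),O)=\pi+s>\pi$, and therefore $\dot\gamma(\pi)=\mathcal N^\Lambda(x)$.

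Finally, trace back. The pair $(x,\mathcal N^\Lambda(x))$ determines the great-circle arc $\tau\mapsto\gamma_{(x,-\mathcal N^\Lambda(x))}(\tau)$, which is exactly the reflected segment traversed backwards. The reflection point is characterized as the point on this arc at parameter $\tau_*$ where the broken length closes up:
\[
\tau_*+d_{\mathbb S^2}\bigl(O,\gamma_{(x,-\mathcal N^\Lambda(x))}(\tau_*)\bigr)=\pi,
\]
with $\tau\in(0,\pi)$. Uniqueness of $\tau_*$ follows from Proposition~\ref{one ray intersection}, or more directly from strict monotonicity of $\tau+d(O,\cdot)$ along the arc before the triangle inequality saturates. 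This expresses $\mathcal R(x)=\gamma_{(x,-\mathcal N^\Lambda(x))}(\tau_*)$ purely in terms of $O$, $x$ and $\mathcal N^\Lambda(x)$, with no reference to $N$, which gives the claimed unique determination.

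The main obstacle is the sign and the identification $t(\cdot,O)=\widetilde t$ near $x$. I would attempt to handle it via the minimizing characterization from the proof of Proposition~\ref{lemma:reflection_shadow_visible_characterization}, supplemented by Case~3 of Proposition~\ref{one ray intersection} to exclude competing minimizers.
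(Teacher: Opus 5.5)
Most of your outline matches the paper. Well-definedness comes from Proposition~\ref{one ray intersection}, the Gauss-lemma identity \eqref{Gauss lemma} gives $\dot\gamma^B_{(O,\xi_x)}(\pi)=\pm\mathcal N^\Lambda(x)$, and the backtracking step uses the same function. Your $\tau\mapsto\tau+d_{\mathbb S^2}(O,\gamma_{(x,-\mathcal N^\Lambda(x))}(\tau))$ is exactly the paper's $T(\theta)$. Your triangle-inequality monotonicity is a clean coordinate-free replacement for the paper's two cases $r_0<1$ and $r_0=1$.

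One small correction: Proposition~\ref{one ray intersection} does not give uniqueness of $\tau_*$. A second root $\tau'$ gives a point $\gamma(\tau')$ that need not lie on $\partial M$, so that proposition says nothing about it. Monotonicity is what works. The level set $\{\tau:\ \tau+d_{\mathbb S^2}(O,\gamma(\tau))=\pi\}$ is an interval. Constancy on a nontrivial interval forces equality in the triangle inequality. Then the incoming segment would continue smoothly into the reflected one at $x_B$, which is impossible for a transversal reflection.

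The real gap is the sign, which you leave open and attack from the harder side. To place $\exp_O^B(\pi+s,\xi_x)$ in $M^{\mathrm I}$ you need the \emph{lower} bound $t(\exp_O^B(\pi+s,\xi_x),O)>\pi$. That is, you must show no other broken path via $\partial M$ is shorter. Case~3 of Proposition~\ref{one ray intersection} does not give this. It compares two reflected rays meeting at the \emph{same} time $t_0\le\pi$, whereas a competing minimizer would reach the point at a different time, and $\pi+s$ is outside $(0,\pi]$. Your route can be repaired: take a continuity argument on minimizers of $z\mapsto d_{\mathbb S^2}(O,z)+d_{\mathbb S^2}(z,y)$ as $y\to x$, and combine it with local injectivity of $\exp_O^B$ near $(\pi,\xi_x)$. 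That is real extra work, though.

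The paper needs only the trivial \emph{upper} bound, because it looks at times just before $\pi$. For $t_B(\xi_x)<t<\pi$, the broken path $O\to x_B\to\gamma^B_{(O,\xi_x)}(t)$ has length $t$. So $t(\gamma^B_{(O,\xi_x)}(t),O)\le t<\pi$, and by \eqref{eqn: t} and \eqref{eq:M_I} these points lie in $M^{\mathrm{II}}$. Near $x$, $\Lambda_M^O$ is a smooth curve (Proposition~\ref{stratification}), and the ray crosses it orthogonally at time $\pi$. If $\dot\gamma^B_{(O,\xi_x)}(\pi)$ were $-\mathcal N^\Lambda(x)$, then $\gamma^B_{(O,\xi_x)}(\pi-s)$ would lie in $M^{\mathrm I}$ for small $s>0$. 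Hence $\dot\gamma^B_{(O,\xi_x)}(\pi)=\mathcal N^\Lambda(x)$, and you never need to identify $t(\cdot,O)$ with $\widetilde t$.
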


\begin{proof}
  Since $x\neq -O$,  $\gamma^B_{(O,\xi)}$ is a reflected spherical geodesic. By Proposition~\ref{one ray intersection}, such a direction $\xi$ is unique. This  shows that  $\mathcal{R}$ is well-defined.
	
	We next prove that the pair $\bigl(x,\mathcal N^\Lambda(x)\bigr)$ uniquely determines the reflection point $\mathcal{R}(x)$. First, choose a spherical isometry so that
	\[
	\bigl(x(\xi),\dot\gamma^B_{(O,\xi)}(\pi)\bigr)
	=
	\bigl((1,0,0),(0,-1,0)\bigr).
	\]
	In this coordinate system, the reflection point lies on the equator and is given by
	\[
	x_B(\xi)=(\cos\theta,\sin\theta,0),\qquad \theta\in(0,\pi).
	\]	
	We then express the source point in the same coordinate system as
	\[
	O=(r_0\cos\theta_0,r_0\sin\theta_0,\sqrt{1-r_0^2}),\qquad 0\le r_0\le 1.
	\]
	In these coordinates, the travel time $T(\theta)$ from $O$ to $x_B(\xi)$ through $x(\xi)$ is 
	\[
	T(\theta)=\theta+2\arcsin\sqrt{\frac12\bigl(1-r_0\cos(\theta-\theta_0)\bigr)},\qquad \theta\in(0,\pi).
	\] 
	
	\medskip
	\noindent\textbf{Case 1: $0\le r_0<1$.}
	A direct computation shows
	\[
	\frac{d}{d\theta}T(\theta)
	=
	1+\frac{r_0\sin(\theta-\theta_0)}{\sqrt{1-r_0^2\cos^2(\theta-\theta_0)}}>0.
	\]
	Hence $T$ is strictly increasing on $(0,\pi)$. Since $T(0)<\pi<T(\pi)$, there exists a unique $\theta_1\in(0,\pi)$ such that $T(\theta_1)=\pi$.
	
	\medskip
	\noindent\textbf{Case 2: $r_0=1$.}
	In this case, $O=(\cos\theta_0,\sin\theta_0,0),$ and
	\[
	T(\theta)=\theta+|\theta-\theta_0|.
	\]
	Since $x(\xi)\neq -O$, we have $\theta_0\in(-\pi,\pi)$, and there exists a unique $\theta_1\in(0,\pi)$ such that $T(\theta_1)=\pi$.
	
	Thus in both cases the reflection point
	\[
	x_B(\xi)=(\cos\theta_1,\sin\theta_1,0)
	\]
	is uniquely determined by the pair $\bigl(x(\xi),\dot\gamma^B_{(O,\xi)}(\pi)\bigr).$
	
	Finally, we identify $\dot{\gamma}^B_{(O,\xi)}(\pi)$ with $\mathcal N^\Lambda(x)$. By \eqref{Gauss lemma} and the smoothness of \(\Lambda_M^O\setminus\{-O\}\) (Proposition~\ref{stratification}), the vector \(\dot\gamma^B_{(O,\xi)}(\pi)\) is normal to \(\Lambda_M^O\).
	Proposition~\ref{lemma:reflection_shadow_visible_characterization} gives that
	\[
	\{\gamma^B_{(O,\xi)}(t)\in M: t_B(\xi)<t<\pi\}\subset M^{\mathrm{II}},
	\]
	and $\gamma^B_{(O,\xi)}(t)$ approaches $\Lambda_M^O$ as $t\to\pi-0$.  Therefore,
	\[
	\dot\gamma^B_{(O,\xi)}(\pi)=\mathcal N^\Lambda(x(\xi)).
	\]
	Hence the pair $\bigl(x,\mathcal N^\Lambda(x)\bigr)$ uniquely determines $\mathcal{R}(x)$.
\end{proof}

\section{Detection of conic obstacles}
\label{Reconstruction_of_the_strictly_convex_cone}

 Now we are in the position to complete the proof of Theorem~\ref{main}.
 The reconstruction process follows the strategy sketched in \ref{sec : methodlogy}.

%	By Proposition~\ref{prop:diffraction_to_half_wave}, once the tip $P$ is identified, the diffraction coefficients recover the spherical half wave propagator on the exterior base $M\subset\mathbb S^2$. Then Proposition~\ref{prop:half_wave_to_wavefront}  proves that this data, measured on $\pi_{\mathbb S^2}^P(S)$, determines the full $\pi$-time spherical wavefront.

%	Proposition~\ref{prop:wavefront_to_visible_boundary} then shows that the recovered spherical wavefront determines the visible part of the conic boundary, thereby completing the proof of Theorem~\ref{main}.  
	
	\begin{proof}[Proof of Theorem~\ref{main}] 
	
	Suppose that two  strictly convex cones
	$
	C_{P_{(1)}}(N_{(1)})$ and $
	C_{P_{(2)}}(N_{(2)})
	$
	have the same local measurements.
	In the frequency domain, this means that their lens data and principal
	diffraction coefficients agree:
	$$
	\left(
	\widetilde \xi_{(1)}(\widetilde x),
	D_{0,(1)}(\widetilde x,\widetilde O)
	\right)
	=
	\left(
	\widetilde \xi_{(2)}(\widetilde x),
	D_{0,(2)}(\widetilde x,\widetilde O)
	\right),
	\qquad
	\widetilde x\in S.
	$$
	In the time domain, this means that their diffracted arrival times and
	arrival-time diffraction coefficients agree:
	$$
	\left(
	\ell_{(1)}(\widetilde x),
	d_{0,(1)}(0,\widetilde x,\widetilde O)
	\right)
	=
	\left(
	\ell_{(2)}(\widetilde x),
	d_{0,(2)}(0,\widetilde x,\widetilde O)
	\right),
	\qquad
	\widetilde x\in S.
	$$

We first spot the cone tip.
	\begin{proposition}
		\label{prop:tip_recovery}
		Let $\widetilde O\in \mathbb R^3$, $P_{(1)},P_{(2)}\in\mathbb R^3$ and   $S\subset \mathbb R^3$  as in
		Theorem~\ref{main}.   If either   the lens data of the diffracted waves agree:
			\[
			(\widetilde x,\widetilde \xi_{(1)}(\widetilde x))
			=
			(\widetilde x,\widetilde \xi_{(2)}(\widetilde x)), \quad \forall \widetilde x\in S
			\]
			or  the diffracted arrival times agree:
			\[
			\ell_{(1)}(\widetilde x)=\ell_{(2)}(\widetilde x), \quad \forall \widetilde x\in S.
			\] 
		then $P_{(1)}=P_{(2)}.$
	\end{proposition}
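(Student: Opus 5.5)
We treat the two data types separately; in both the argument is elementary Euclidean geometry, using only that the diffracted wave emanates from the tip as a point source.

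\emph{Lens data.} The diffracted field radiates from the tip, so at each $\widetilde x\in S$ its propagation direction is $\widetilde\xi_{(j)}(\widetilde x)=(\widetilde x-P_{(j)})/|\widetilde x-P_{(j)}|$. Hence $P_{(j)}$ lies on the line $L(\widetilde x):=\{\widetilde x-s\widetilde\xi(\widetilde x):s\in\mathbb R\}$ for every $\widetilde x\in S$. Agreement of the lens data means both tips lie on $\bigcap_{\widetilde x\in S}L(\widetilde x)$. It suffices to find two points of $S$ whose lines are distinct, since two distinct lines meet in at most one point. Suppose instead that all $L(\widetilde x)$ coincide with a single line $L$. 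Then $S\subset L$, which is impossible for a surface patch. Therefore the intersection is the single point $P_{(1)}=P_{(2)}$.

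\emph{Arrival times.} Here $\ell_{(j)}(\widetilde x)=|\widetilde O-P_{(j)}|+|\widetilde x-P_{(j)}|$. Set $a_j:=|\widetilde O-P_{(j)}|$. Equality of the arrival times on $S$ gives
\begin{equation*}
|\widetilde x-P_{(1)}|-|\widetilde x-P_{(2)}|=a_2-a_1=:c,\qquad \widetilde x\in S .
\end{equation*}
Assume $P_{(1)}\neq P_{(2)}$ and split into cases.
\begin{itemize}
\item If $|c|<|P_{(1)}-P_{(2)}|$, then $S$ lies in one sheet of a hyperboloid of two sheets with foci $P_{(1)},P_{(2)}$. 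When $c=0$ this degenerates to the perpendicular bisector plane, which has vanishing Gauss curvature.
\item If $|c|=|P_{(1)}-P_{(2)}|$, then $S$ lies on a ray of the line through $P_{(1)}$ and $P_{(2)}$.
\end{itemize}
The hyperboloid case contradicts the hypothesis that $S$ is not contained in any hyperboloid. The plane case contradicts non-vanishing Gauss curvature, and the line case is impossible for a surface. Hence $P_{(1)}=P_{(2)}$.

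The main point requiring care is the arrival-time case. There we must list the degenerate level sets of the difference of distances correctly, namely the plane when $c=0$ and the rays when $|c|$ equals the focal distance, and match each to one of the hypotheses on $S$.
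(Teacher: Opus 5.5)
Your proposal is correct and follows the paper's proof. In the lens-data case, both tips lie on two distinct lines through points of $S$, and in the arrival-time case, the difference-of-distances identity forces $S$ into a level set with foci $P_{(1)},P_{(2)}$. Your explicit handling of the degenerate level sets is a small refinement the paper passes over by calling the level set a two-foci hyperboloid: the bisector plane when $c=0$ is excluded by the non-vanishing Gauss curvature, and the rays when $|c|=|P_{(1)}-P_{(2)}|$ cannot contain a surface.
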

	
	\begin{proof}
		We first consider the lens data. Equality of the lens data implies that, for every $\widetilde x\in S$, the two tips $P_{(1)}$ and $P_{(2)}$ lie on the same line determined by $(\widetilde x,\widetilde \xi(\widetilde x))$.	
		Choose $\widetilde x_1,\widetilde x_2\in S$ such that the corresponding lines are distinct; this is possible since $S$ is not contained in a single line. The two lines must intersect at a common point, and thus $P_{(1)}=P_{(2)}$.
		
		We next consider the arrival-time data. Assume $P_{(1)}\neq P_{(2)}$. Then $\ell_{(1)}(\widetilde x)=\ell_{(2)}(\widetilde x)$ implies
		\[
		|\widetilde x-P_{(2)}|-|\widetilde x-P_{(1)}|=|\widetilde O-P_{(1)}|-|\widetilde O-P_{(2)}|=c_0,
		\quad \widetilde x\in S,
		\]
		and whence $S$ is contained in a two-foci hyperboloid. This contradicts the assumption in Theorem~\ref{main}. Hence $P_{(1)}=P_{(2)}$.
	\end{proof}

%	By Proposition~\ref{prop:tip_recovery}, either the equality of the	lens data in the frequency domain or the equality of the arrival times	in the time domain determines the diffraction point uniquely. Hence $P:=P_{(1)}=P_{(2)}.$
	
	With the tip recovered, the spherical projection $\pi_{\mathbb S^2}^{P}$ is determined. We can fix
	$
	O=\pi_{\mathbb S^2}^{P}(\widetilde O)$ and $
	x=\pi_{\mathbb S^2}^{P}(\widetilde x).
	$
	Set $M_{(j)}:=\mathbb S^2\setminus N_{(j)},$ $j=1,2.$ By Proposition~\ref{prop:diffraction_to_half_wave}, the equality of
	the diffraction coefficients, either in the frequency domain or at the
	diffracted arrival time in the time domain, implies \eqref{eqn : sine half wave on S}
	on $\pi_{\mathbb S^2}^{P}(S)
	\cap
	M_{(1)}^{\mathrm I}
	\cap
	M_{(2)}^{\mathrm I}.$
	
	Since $S$ has non-vanishing Gauss curvature, the projection
	$\pi_{\mathbb S^2}^{P}|_S$ has full rank at some point. Therefore,
	by the inverse function theorem, $\pi_{\mathbb S^2}^{P}(S)$ contains
	a nonempty open subset $U$. Since $S$ lies in the reflection shadow,
	Proposition~\ref{lemma:reflection_shadow_visible_characterization} shows that
	$
	U\subset M_{(1)}^{\mathrm I}\cap M_{(2)}^{\mathrm I}.
	$
	Hence \eqref{eqn : sine half wave on S} holds on
	a nonempty open subset of $M_{(1)}^{\mathrm I}\cap M_{(2)}^{\mathrm I}.$ Proposition~\ref{prop:half_wave_to_wavefront} then yields
	$
	\Lambda_{M_{(1)}}^O=\Lambda_{M_{(2)}}^O.
	$

	Finally, the smoothness of $\Lambda_M^O\setminus\{-O\}$ determines $\mathcal N^\Lambda$ along $\Lambda_M^O\setminus\{-O\}$. By Proposition~\ref{reconstruct_boundary},  $\Lambda_M^O\setminus\{-O\}$ and $\mathcal N^\Lambda$ recover the set
	$$
	\overline{\mathcal{R}\bigl(\Lambda_M^O\setminus\{-O\}}\bigr)\subset \partial M.
	$$
	This set is equal to the projection of visible boundary $\operatorname{Vis}_{\widetilde O}
	\left(
	\partial C_P(N)
	\right)$. Since $P$ has been detected, the two cones have the same visible boundary
	$$
	\operatorname{Vis}_{\widetilde O}
	\left(
	\partial C_P(N_{(1)})
	\right)
	=
	\operatorname{Vis}_{\widetilde O}
	\left(
	\partial C_P(N_{(2)})
	\right).
	$$
	
	This proves the injectivity of both maps
	\eqref{eqn : FD observation at l} and \eqref{eqn : observation at l}.
\end{proof}

\appendix

\section{Proofs of technical lemmas}\label{appendix:reflected_rays}

%This appendix records the geometric properties of reflected spherical rays needed in the main text.

%The next lemma shows that, along a reflected spherical ray, the  exponential map is nondegenerate.

\subsection{Non-degeneracy of spherical exponential maps}

\begin{proof}[Proof of Lemma \ref{spherical geodesic non-degeneracy}]
	We work in Cartesian coordinates on \(\mathbb R^3\), taking the center of \(\mathbb S^2\) as the origin and the plane of \(C_{x_B(\eta)}\) as the \(xy\)-plane. Write the first hitting time $t_B:=t_B(\eta)$ and the first hitting point $x_B:=x_B(\eta)$ as in Definition~\ref{the classification of ray}. Since the ray $\gamma_{(O,\eta)}^B$ meets $\partial M$ transversally, the maps $\xi\mapsto t_B(\xi)$ and $\xi\mapsto x_B(\xi)$ are smooth near $\eta$.
	
	Fix $t\in(t_B,\pi]$. After shrinking a neighbourhood $U\subset S_OM$ of $\eta$ if necessary, we have $t>t_B(\xi)$ for every $\xi\in U$. The fixed-time exponential map
	\[
	G_t(\xi):=\exp_O^{B}(t,\xi)
	\]
	is given by
	\begin{equation}\label{Gt-reflection-formula}
		G_t(\xi)
		=
		\gamma_{(O,\xi)}(t)
		-2\Big\langle \gamma_{(O,\xi)}(t),\mathcal N\bigl(x_B(\xi)\bigr)\Big\rangle_{\mathbb R^3}
		\mathcal N\bigl(x_B(\xi)\bigr),
	\end{equation}
	where $\mathcal N$ is the inward unit normal to $M$ along $\partial M$. The spherical geodesic $\gamma_{(O,\xi)}(t)$ lies on the $N$-side of the tangent great circle to $N$ at $x_B(\xi)$. Hence, after possibly shrinking $U$ further,
	\begin{equation}\label{negative}
		\Big\langle \gamma_{(O,\xi)}(t),\mathcal N\bigl(x_B(\xi)\bigr)\Big\rangle_{\mathbb R^3}<0,
		\qquad \xi\in U.
	\end{equation}
	
	We first prove that the angular differential $d_\eta G_t$ is injective. Let \(w\in T_\eta(S_OM)\) be a unit tangent vector, and define
	\[
	v:=d_\eta x_B(w)\in T_{x_B}\partial M.
	\]
	Differentiation of \eqref{Gt-reflection-formula} gives
	\begin{multline}\label{differential_readable}
		d_\eta G_t(w)
		=
		d_\eta\gamma_{(O,\cdot)}(t)(w)
		-2\big\langle \gamma_{(O,\eta)}(t),\mathcal N(x_B)\big\rangle_{\mathbb R^3}
		\nabla_v\mathcal N\\
		-2d_\eta\!\left[\big\langle \gamma_{(O,\cdot)}(t),\mathcal N\circ x_B\big\rangle_{\mathbb R^3}\right](w)
		\mathcal N(x_B),
	\end{multline}
	where $\nabla_v \mathcal N$ is the direction derivative of $\mathcal N$ along $v$. Pairing \eqref{differential_readable} with $v$ and using $\langle \mathcal N(x_B),v\rangle_{\mathbb R^3}=0$, we obtain
	\begin{equation}\label{eq:pairing-Gt-v}
			\big\langle d_\eta G_t(w),v\big\rangle_{\mathbb R^3}
			=
			\left\langle d_\eta\gamma_{(O,\cdot)}(w),v\right\rangle_{\mathbb R^3}-2\big\langle \gamma_{(O,\eta)}(t),\mathcal N(x_B)\big\rangle_{\mathbb R^3}
			\big\langle \nabla_v\mathcal N,v\big\rangle_{\mathbb R^3}.
	\end{equation}
	
	Differentiating $x_B(\xi)$ at $\xi=\eta$ in the direction $w$, we obtain
	\begin{equation}\label{eq:differential-hitting-point}
		v
		=
		\dot\gamma_{(O,\eta)}(t_B)\,d_\eta t_B(w)
		+d_\eta\gamma_{(O,\cdot)}(t_B)(w).
	\end{equation}
	By the Gauss lemma,
	\begin{equation*}
		\left\langle
		\dot\gamma_{(O,\eta)}(t),
		d_\eta\gamma_{(O,\cdot)}(t)(w)
		\right\rangle_{\mathbb R^3}
		=\left\langle
		\eta,
		w
		\right\rangle_{\mathbb R^3}=0.
	\end{equation*}
	Since $\gamma_{(O,\eta)}$ is a great circle on $\mathbb{S}^2$ which is a planar curve, $d_\eta\gamma_{(O,\cdot)}(t)(w)$ is normal to this plane. Therefore, 
	\begin{equation}\label{eq:orthogonality}
		\left\langle
		d_\eta\gamma_{(O,\cdot)}(t)(w),
		\dot\gamma_{(O,\eta)}(t_B)
		\right\rangle_{\mathbb R^3}=0.
	\end{equation}
	
	After a rotation of $\mathbb S^2$, we may assume that
	\[
	O=(0,0,1),
	\qquad
	\eta=(\cos\theta,\sin\theta,0),
	\qquad
	w=(-\sin\theta,\cos\theta,0).
	\]
	For the spherical geodesic \(\gamma_{(O,\eta)}\), we have
	\[
	\gamma_{(O,\eta)}(t)=(\sin t\cos \theta, \sin t\sin \theta, \cos t),\qquad t\in [0,\pi].
	\]
	Then
	\begin{align*}
		d_\eta\gamma_{(O,\cdot)}(t)(w)
		&=\frac{d}{ds}\gamma_{(O,(\cos(\theta+s),\sin(\theta+s),0))}(t)\Big|_{s=0}\\
		&=(-\sin t\sin\theta,\sin t\cos\theta,0).
	\end{align*}
	Since \(0< t_B<t\leq\pi\),
	\[
    \left\langle	d_\eta\gamma_{(O,\cdot)}(t)(w),
	d_\eta\gamma_{(O,\cdot)}(t_B)(w)
	\right\rangle_{\mathbb R^3}
	=
	\sin t\,\sin t_B\geq0.
	\]
	Combining this identity with \eqref{eq:differential-hitting-point} and \eqref{eq:orthogonality}, we obtain
	\begin{equation}\label{eq:free-pairing-nonnegative}
		\left\langle
		d_\eta\gamma_{(O,\cdot)}(t)(w),v
		\right\rangle_{\mathbb R^3}
		=
		\sin t\,\sin t_B\geq0.
	\end{equation}
	
	Since $\mathcal N$ is the inward normal to $M$, it is the outward normal to $N$. Thus the strict convexity in Definition~\ref{cone} implies
	\[
	\big\langle \nabla_v\mathcal N,v\big\rangle_{\mathbb R^3}>0.
	\]
	Using this inequality together with \eqref{negative}, \eqref{eq:pairing-Gt-v}, and \eqref{eq:free-pairing-nonnegative}, we conclude that
	\[
	\big\langle d_\eta G_t(w),v\big\rangle_{\mathbb R^3}
	>
	\left\langle
	d_\eta\gamma_{(O,\cdot)}(t)(w),v
	\right\rangle_{\mathbb R^3}
	\geq0.
	\]
	Hence $d_\eta G_t(w)\neq0$. Since $\dim S_OM=1$, the map
	\[
	d_\eta G_t:T_\eta(S_OM)\longrightarrow T_{G_t(\eta)}M
	\]
	is injective.
	
	It remains to show that the angular and time derivatives are linearly independent. Let $w\in T_\eta(S_OM)$. Set
	\[
	V(t):=d_\eta\exp_O^{B}(t,\cdot)(w),
	\qquad
	\dot\gamma(t):=d_t\exp_O^{B}(\cdot,\eta)(1)=
	\dot\gamma_{(O,\eta)}^B(t),
	\]
	where \(V(t)\) and \(\dot\gamma(t)\) denote the angular and time derivatives, respectively. To prove linear independence, it suffices to show that
	\[
	\big\langle V(t),\dot\gamma(t)\big\rangle_{\mathbb S^2}=0,\qquad\text{for }t\in [0,\pi].
	\]
	Differentiating $\big\langle V(t),\dot\gamma(t)\big\rangle_{\mathbb S^2}$ and using the geodesic equation together with the unit-speed parametrization gives
	\[
	\frac{d}{dt}\big\langle V(t),\dot\gamma(t)\big\rangle_{\mathbb S^2}
	=
	\frac{1}{2}d_\eta \left[\big\langle \dot\gamma(t),\dot\gamma(t)\big\rangle_{\mathbb S^2}\right](w)=0.
	\]
	Thus, $t\mapsto \big\langle V(t),\dot\gamma(t)\big\rangle_{\mathbb S^2}$ is constant on \( [0,t_B) \) and on \( (t_B,\pi] \). We show that these two constants agree.
	
	Since $x_B(\xi)\in\mathbb S^2$ and
	$\mathcal N(x_B(\xi))\in T_{x_B(\xi)}\mathbb S^2$, we have
	\[
	\big\langle x_B(\xi),\mathcal N(x_B(\xi))\big\rangle_{\mathbb R^3}=0.
	\]
	Differentiating at $\xi=\eta$ gives
	\[
	\big\langle v,\mathcal N(x_B)\big\rangle_{\mathbb R^3}
	+
	\big\langle x_B,\nabla_v\mathcal N\big\rangle_{\mathbb R^3}=0.
	\]
	The first term vanishes because $v\in T_{x_B}\partial M$; hence $\big\langle x_B,\nabla_v\mathcal N\big\rangle_{\mathbb R^3}=0.$ Therefore, by \eqref{differential_readable} and
	$\langle x_B,\mathcal N(x_B)\rangle_{\mathbb R^3}=0$,
	\begin{align*}
		V(t_B+0)
		&= d_\eta\gamma_{(O,\cdot)}(t_B)(w)
		-
		\lim_{t\downarrow t_B}
		2d_\eta\!\left[\big\langle \gamma_{(O,\cdot)}(t),\mathcal N\circ x_B\big\rangle_{\mathbb R^3}\right](w)
		\mathcal N(x_B)\\
		&=
		V(t_B-0)
		-2\big\langle V(t_B-0),\mathcal N(x_B)\big\rangle_{\mathbb R^3} \mathcal N(x_B)
		-2\big\langle x_B,\nabla_v\mathcal N\big\rangle_{\mathbb R^3}
		\mathcal N(x_B)\\
		&=
		V(t_B-0)
		-2\big\langle V(t_B-0),\mathcal N(x_B)\big\rangle_{\mathbb R^3}
		\mathcal N(x_B).
	\end{align*}
	By \eqref{eqn:reflection vector}, $\dot{\gamma}(t)$ satisfies
	\begin{equation*}
		\dot{\gamma}(t_B+0)
		=
		\dot{\gamma}(t_B-0)
		-2\langle \dot{\gamma}(t_B-0),\mathcal N(x_B)\rangle_{\mathbb R^3}
		\mathcal N(x_B).
	\end{equation*}
	Pairing up $V(t_B+0)$ and $\dot{\gamma}(t_B+0)$, we have
	\[
	\big\langle V(t_B-0),\dot\gamma(t_B-0)\big\rangle_{\mathbb S^2}
	=
	\big\langle V(t_B+0),\dot\gamma(t_B+0)\big\rangle_{\mathbb S^2}.
	\]
	Thus $t\mapsto\langle V(t),\dot\gamma(t)\rangle_{\mathbb S^2}$ is constant on $[0,\pi]$. Since $V(0)=0$, 
	\begin{equation}\label{Gauss lemma}
		\big\langle V(t),\dot\gamma(t)\big\rangle_{\mathbb S^2}=0,
		\qquad t\in[0,\pi].
	\end{equation}
	
	For fixed $t\in(t_B,\pi]$, the injectivity proved above gives
	\[
	V(t)=d_\eta G_t(w)\neq0.
	\]
	By \eqref{Gauss lemma}, $V(t)$ is orthogonal to $\dot\gamma(t)$.
\end{proof}

%\section{Proof of \eqref{eq:tail_decay_microlocal}}\label{appendix1}
%We establish \eqref{eq:tail_decay_microlocal} by repeated integration by parts in the oscillatory integral representation of \(f_\varepsilon^{2q}(\tau)\).

\subsection{Regularity of the tail term}

\begin{proof}[Proof of Lemma~\ref{lemma:KN_goal_final}]
	Consider the high frequency multiplier
	\begin{equation}\label{f_m}
		f_{\varepsilon}^{l}(\tau)
		:=c_n
		\int_{\mathbb{R}}\bigl(1-\chi_\varepsilon^l(t)\bigr)\,|t|^n
		K_n\Bigl(\frac{|t|}{2}\Bigr)\cos(t\tau)\,dt,
	\end{equation}
	so that
	$$
	U_{n,\varepsilon}^{\bullet,\mathrm{tail},l}(s,\cdot)
	=
	f_{\varepsilon}^{l}(\sqrt{\Delta_\bullet})\,
	\cos\Bigl(s\sqrt{\Delta_\bullet+\tfrac14}\Bigr)\delta_O.
	$$
	
	The desired estimate \eqref{eq:KN_goal_final} follows from the following dyadic spectral bound: there exists \(C>0\) such that for all $q\in\mathbb{Z}_+$ and all $l\in\mathbb{N}$,
	\begin{equation}\label{eq:proj_reduction_microlocal}
		\|\Delta_\bullet^q U_{n,\varepsilon}^{\bullet,\mathrm{tail},l}\|_{L^2((s_0-\delta,s_0+\delta)\times U_x)}^2
		\le C\sum_{m\in\mathbb{Z}_+} m^{2q+3}\sup_{\tau\in(m-1,m]}\bigl|f_\varepsilon^{l}(\sqrt{\tau})\bigr|^2.
	\end{equation}
	In fact, it follows from the functional calculus for $\Delta_\bullet$ that
	\begin{eqnarray}\label{L infinite estimate}
		\|\Delta_\bullet^q U_{n,\varepsilon}^{\bullet,\mathrm{tail},l}\|_{L^2((s_0-\delta,s_0+\delta)\times U_x)}^2
		&\leq&
		\left\|\|\Delta_\bullet^q U_{n,\varepsilon}^{\bullet,\mathrm{tail},l}(s,\cdot)\|_{L^2(\bullet)}\right\|_{L^2((s_0-\delta,s_0+\delta))}^2 \nonumber\\
		&\leq&
		2\delta\,\bigl\|\Delta_\bullet^q U_{n,\varepsilon}^{\bullet,\mathrm{tail},l}(0,\cdot)\bigr\|_{L^2(\bullet)}^2 \nonumber\\
		&=&
		2\delta\,\bigl\|\Delta_\bullet^q f_\varepsilon^l(\sqrt{\Delta_\bullet})\delta_O\bigr\|_{L^2(\bullet)}^2 \nonumber\\
		&=&
		2\delta\sum_{\lambda_j\in\Spec(\Delta_\bullet)}
		\bigl(\lambda_j^q f_\varepsilon^l(\sqrt{\lambda_j})\bigr)^2\,|\varphi_j(O)|^2, \nonumber
	\end{eqnarray}
	where $\varphi_j\in C^\infty(\bullet)$ is a normalized $L^2$-eigenfunction of $\Delta_\bullet$ associated with  eigenvalue $\lambda_j$. The Sobolev embedding theorem yields that
	$$
	|\varphi_j(O)| \lesssim \|\varphi_j\|_{H^2(\bullet)} \lesssim \|\Delta_\bullet\varphi_j\|_{L^2(\bullet)} +\|\varphi_j\|_{L^2(\bullet)}\lesssim \lambda_j,\quad\text{for $\lambda_j>0$}.
	$$
	Therefore, we have
	$$
	\|\Delta_\bullet^q U_{n,\varepsilon}^{\bullet,\mathrm{tail},l}\|_{L^2((s_0-\delta,s_0+\delta)\times U_x)}^2
	\lesssim
	\sum_{\lambda_j\in\Spec(\Delta_\bullet)}
	\bigl(\lambda_j^q f_\varepsilon^l(\sqrt{\lambda_j})\bigr)^2 \lambda_j^2.
	$$
	Applying Weyl's law for manifolds with piecewise smooth boundary
	\cite[p.~172]{Eigenvalues_in_Riemannian_geometry}, we obtain
	\begin{eqnarray*}
		&&
		\|\Delta_\bullet^q U_{n,\varepsilon}^{\bullet,\mathrm{tail},l}\|_{L^2((s_0-\delta,s_0+\delta)\times U_x)}^2 \nonumber\\
		&\lesssim&
		\sum_{m\in\mathbb{Z}_+}
		\#\Bigl\{\lambda\in(m-1,m] : \lambda\in\Spec(\Delta_\bullet)\Bigr\}\,
		m^{2q+2}
		\sup_{\tau\in(m-1,m]}\bigl|f_\varepsilon^l(\sqrt{\tau})\bigr|^2 \\
		&\lesssim&
		\sum_{m\in\mathbb{Z}_+}
		m^{2q+3}\,
		\sup_{\tau\in(m-1,m]}\bigl|f_\varepsilon^l(\sqrt{\tau})\bigr|^2,
	\end{eqnarray*}
	which proves \eqref{eq:proj_reduction_microlocal}.

	With regard to the estimates of $|f_\varepsilon^l(\sqrt{\tau})\bigr|$ as $\tau\to\infty$, we claim that there exist constants $C(\varepsilon),L(\varepsilon)>0$ such that for all $q\in\mathbb{Z}_+$ and all $\tau\ge 1$,
		\begin{equation}\label{eq:tail_decay_microlocal}
			\bigl|f_\varepsilon^{2q}(\tau)\bigr|
			\le C(\varepsilon)\Bigl(\frac{L(\varepsilon)q}{\tau}\Bigr)^{2q}.
	\end{equation}

To show \eqref{eq:tail_decay_microlocal}, we rewrite  \eqref{f_m} as
$$
f_\varepsilon^{2q}(\tau)
=
c_n\int_{\mathbb R}\bigl(1-\chi_\varepsilon^{2q}(t)\bigr)\,b(t)\cos(t\tau)\,dt,
\qquad
c_n:=\frac{1}{\pi}\frac{4^n n!}{(2n)!},
$$
with \begin{equation*}\label{eqn:mapping_b}
	b(t):=|t|^nK_n\Bigl(\frac{|t|}{2}\Bigr), \qquad t\in\mathbb R.
\end{equation*}

We first establish an estimate for  $b^{(m)}:=\partial_t^{m}b$. It remains to show that there exist constants $C_1(\varepsilon),L_1(\varepsilon)>0$ such that
\begin{equation}\label{eq:b_derivative_L1}
	\int_{|t|\ge \varepsilon/2}|b^{(m)}(t)|\,dt
	\le
	C_1(\varepsilon)L_1(\varepsilon)^m m!,
	\qquad m\in\mathbb Z_+.
\end{equation}
Indeed, for both of  $[\varepsilon/2,\infty)$ and $(-\infty,-\varepsilon/2]$, the Bessel function $K_n(z)$ extends holomorphically to a neighbourhood in $\mathbb{C}$ staying away from $0\in \mathbb{C}$. For $r_1(\varepsilon)>0$, Cauchy's integral formula gives
$$
b^{(m)}(t)
=
\frac{m!}{2\pi i}
\int_{|z-t|=r_1(\varepsilon)}
\frac{b(z)}{(z-t)^{m+1}}\,dz,
\qquad |t|\ge \varepsilon/2.
$$
Consequently,
$$
|b^{(m)}(t)|
\le
r_1(\varepsilon)^{-m}m!\,
\sup_{|z-t|=r_1(\varepsilon)}|b(z)|,
\qquad |t|\ge \varepsilon/2.
$$
Using the exponential decay of $K_n(z)$ in sectors $|\arg z|\le \pi-\delta$; see \cite[p.~202, Eq.~(1)]{Watson44}, there exist constants $C_2(\varepsilon),c_1(\varepsilon)>0$ such that
$$
\sup_{|z-t|=r_1(\varepsilon)}|b(z)|
\le
C_2(\varepsilon)e^{-c_1(\varepsilon)|t|},
\qquad |t|\ge \varepsilon/2.
$$
It follows that
$$
\int_{|t|\ge \varepsilon/2}|b^{(m)}(t)|\,dt
\le
C_2(\varepsilon)r_1(\varepsilon)^{-m}m!
\int_{|t|\ge \varepsilon/2}e^{-c_1(\varepsilon)|t|}\,dt.
$$
Thus, setting $L_1(\varepsilon):=r_1(\varepsilon)^{-1},$ we obtain \eqref{eq:b_derivative_L1}.

We next estimate 
$$
a_{q,\varepsilon}(t):=\bigl(1-\chi_\varepsilon^{2q}(t)\bigr)b(t),
$$
with cutoff function $\chi_\varepsilon^{2q}$ in \eqref{eq:a family of cutoff function}. We claim that there exist constants $C_3(\varepsilon),L_2(\varepsilon)>0$ such that
\begin{equation}\label{eq:a_derivative_final}
	\|a_{q,\varepsilon}^{(2q)}\|_{L^1(\mathbb R)}
	\le
	C_3(\varepsilon)\bigl(L_2(\varepsilon)q\bigr)^{2q}.
\end{equation}
Since $\chi_\varepsilon^{2q}(t)\equiv1$ on $[-\varepsilon/2,\varepsilon/2]$, we have $a_{q,\varepsilon}(t)=0$ $\text{for } |t|\le \varepsilon/2.$ By Leibniz's rule,
$$
a_{q,\varepsilon}^{(2q)}(t)
=
\sum_{j=0}^{2q}\binom{2q}{j}
\Bigl(\frac{d}{dt}\Bigr)^j
\bigl(1-\chi_\varepsilon^{2q}(t)\bigr)
b^{(2q-j)}(t).
$$
Therefore,
\begin{equation}\label{eq:Leibniz_L1}
	\|a_{q,\varepsilon}^{(2q)}\|_{L^1(\mathbb R)}
	\le
	\sum_{j=0}^{2q}\binom{2q}{j}
	\Bigl\|
	\Bigl(\frac{d}{dt}\Bigr)^j
	\bigl(1-\chi_\varepsilon^{2q}\bigr)
	\Bigr\|_{L^\infty}
	\int_{|t|\ge \varepsilon/2}
	|b^{(2q-j)}(t)|\,dt.
\end{equation}
Substituting \eqref{cutoff} and \eqref{eq:b_derivative_L1} into \eqref{eq:Leibniz_L1}, we obtain, for some constant $C_4(\varepsilon)>0$,
$$
\|a_{q,\varepsilon}^{(2q)}\|_{L^1(\mathbb R)}
\le
C_1(\varepsilon)
\sum_{j=0}^{2q}\binom{2q}{j}
\Bigl(\frac{C_4(\varepsilon)q}{\varepsilon}\Bigr)^j
L_1(\varepsilon)^{2q-j}(2q-j)!.
$$
Using the elementary inequality $(2q-j)!\le (2q)^{2q-j},$ we get
\begin{align*}
	\|a_{q,\varepsilon}^{(2q)}\|_{L^1(\mathbb R)}
	&\le
	C_1(\varepsilon)
	\sum_{j=0}^{2q}\binom{2q}{j}
	\Bigl(\frac{C_4(\varepsilon)q}{\varepsilon}\Bigr)^j
	\bigl(2L_1(\varepsilon)q\bigr)^{2q-j}\\
	&=C_1(\varepsilon)
	\Bigl(
	2L_1(\varepsilon)q+\frac{C_4(\varepsilon)q}{\varepsilon}
	\Bigr)^{2q}.
\end{align*}
Thus there exist constants $C_3(\varepsilon),L_2(\varepsilon)>0$ such that
$$
\|a_{q,\varepsilon}^{(2q)}\|_{L^1(\mathbb R)}
\le
C_3(\varepsilon)\bigl(L_2(\varepsilon)q\bigr)^{2q},
$$
which proves \eqref{eq:a_derivative_final}.

Finally, we integrate by parts $2q$ times to obtain
$$
f_\varepsilon^{2q}(\tau)
=
c_n\tau^{-2q}(-1)^q
\int_{\mathbb R}a_{q,\varepsilon}^{(2q)}(t)\cos(t\tau)\,dt.
$$
Using \eqref{eq:a_derivative_final}, we conclude that there exists a constant $C_5(\varepsilon)>0$ such that
$$
|f_\varepsilon^{2q}(\tau)|
\le
c_n\tau^{-2q}
\|a_{q,\varepsilon}^{(2q)}\|_{L^1(\mathbb R)}
\le
C_5(\varepsilon)
\Bigl(\frac{L_2(\varepsilon)q}{\tau}\Bigr)^{2q}.
$$
This proves \eqref{eq:tail_decay_microlocal}.

	Fix $q\in\mathbb{Z}_+$ and set $l=2q+6$. By \eqref{eq:tail_decay_microlocal}, for $m\ge 2$, we have
	$$
	\sup_{\tau\in(m-1,m]}\bigl|f_\varepsilon^{2q+6}(\sqrt{\tau})\bigr|
	\le C(\varepsilon)\left(\frac{L(\varepsilon)q}{(m-1)^{1/2}}\right)^{2q+6}
	\le C(\varepsilon)\Bigl(\frac{2L(\varepsilon)q}{m^{1/2}}\Bigr)^{2q+6}.
	$$
	Substituting this into \eqref{eq:proj_reduction_microlocal}, we obtain
	\begin{equation*}
		\|\Delta_\bullet^q U_{n,\varepsilon}^{\bullet,\mathrm{tail},2q+6}\|^2_{L^2((s_0-\delta,s_0+\delta)\times U_x)}
		\le
		C(\varepsilon)(2L(\varepsilon)q)^{4q+12}
		\sum_{m=2}^{\infty} m^{-3}
		+ C_0.
	\end{equation*}
	Since the series $\sum_{m=2}^{\infty} m^{-3}$ converges, there exists $C_0(\varepsilon)>0$ such that
	$$
	\|\Delta_\bullet^q U_{n,\varepsilon}^{\bullet,\mathrm{tail},2q+6}\|_{L^2((s_0-\delta,s_0+\delta)\times U_x)}
	\le C_0(\varepsilon)(2L(\varepsilon)q)^{2q+6}.
	$$
	The proof of Lemma~\ref{lemma:KN_goal_final} is concluded by applying Stirling's formula \cite[p.~312, Eq.~(A.40)]{T_PDE_1}.
\end{proof}

	  	\bigskip

\noindent {\bf Acknowledgements.} The authors were supported in part by NSFC.  
Views and opinions expressed are those of the authors only and do not necessarily reflect those of the funding organizations.

\bigskip	\noindent {\bf Data Availability Statement.} Data sharing not applicable to this article as no datasets were generated or analysed during the current study.

\bigskip	\noindent {\bf Conflict of Interest.} The authors have no conflicts of interest to declare that are relevant to the content of this article.

%---------------------------------------------------------------------------------------%
\bibliography{ref}
\bibliographystyle{amsplain}

\end{document}